\long\def\comment#1{}
\newtheorem{theorem}{Theorem}
\newtheorem{corollary}{Corollary}
\newtheorem{lemma}{Lemma}
\newtheorem{remark}{Comment}[section]
\newenvironment{proof}[1][Proof]{\textbf{#1.} }{\ \rule{0.5em}{0.5em}}
\newcommand{\qed}{\rule{0.5em}{0.5em}}
\newcommand{\wpe}{wp $1-\varepsilon$}
\def\RR{{\rm I\kern-0.18em R}}
\def\vol{{\rm vol}}
\def\conv{{\rm  conv}}
\def\est{\hat{\mu}_{g}}
\def\Aproof#1#2{ {\noindent \bf Proof of #1 \ref{#2}.} }
\newcommand{\be}{\begin{eqnarray}}
\newcommand{\ee}{\end{eqnarray}}
\newcommand{\NN}{\mathbf{N}}
\newcommand{\ba}{\begin{array}}
\newcommand{\ea}{\end{array}}
\newcommand{\bs}{\begin{align}\begin{split}\nonumber}
\newcommand{\bsnumber}{\begin{align}\begin{split}}
\newcommand{\es}{\end{split}\end{align}}
\renewcommand{\(}{\left(}
\renewcommand{\)}{\right)}
\renewcommand{\[}{\left[}
\renewcommand{\]}{\right]}
\renewcommand{\hat}{\widehat}
\newcommand{\Ep}{E}
\begin{document}

\begin{frontmatter}

\title{On the Computational Complexity of MCMC-based
Estimators in Large Samples}\runtitle{Complexity of MCMC}


\author{\fnms{Alexandre}
\snm{Belloni}\ead[label=e1]{abn5@duke.edu}}\footnote{Research
support from a National Science Foundation grant is gratefully
acknowledged. IBM Herman Goldstein Fellowship is also gratefully
acknowledged.}
\affiliation{Duke University}
\and
\author{\fnms{Victor} \snm{Chernozhukov}\ead[label=e2]{vchern@mit.edu}}\footnote{Research support from a National Science Foundation
grant is gratefully acknowledged. Sloan Foundation Research
Fellowship and Castle Krob Chair are also gratefully
acknowledged. \\ First version: April 2006.}
\affiliation{Massachusetts Institute of Technology}
\runauthor{Belloni and Chernozhukov}

\begin{abstract}
In this paper we examine the implications of the
statistical large sample theory for the
computational complexity of Bayesian and
quasi-Bayesian estimation carried out using
Metropolis random walks.  Our analysis is
motivated by the Laplace-Bernstein-Von Mises
central limit theorem, which states that in large
samples the posterior or quasi-posterior
approaches a normal density.  Using the conditions
required for the central limit theorem to hold, we establish polynomial bounds on
the computational complexity of general
Metropolis random walks methods in large samples.
Our analysis covers cases where the underlying
log-likelihood or extremum criterion function is
possibly non-concave, discontinuous, and with
 increasing parameter dimension. However, the central limit
theorem restricts the deviations from continuity
and log-concavity of the log-likelihood or
extremum criterion function in a very specific
manner.

Under minimal assumptions required for the central limit
theorem to hold under the increasing parameter dimension, we show that the
Metropolis algorithm is theoretically efficient
even for the canonical Gaussian walk which is
studied in detail. Specifically, we show that the
running time of the algorithm in large samples is
bounded in probability by a polynomial in the
parameter dimension $d$, and, in particular, is
of stochastic order $d^2$ in the leading cases
after the burn-in period. We then give
applications to exponential families, curved exponential
families, and Z-estimation of increasing dimension.

\end{abstract}

\begin{keyword}[class=AMS]
\kwd[Primary ]{} \kwd{65C05} \kwd[; secondary ]{65C60}
\end{keyword}



\begin{keyword}
\kwd{Markov Chain Monte Carlo} \kwd{Computational Complexity}  \kwd{Bayesian} \kwd{Increasing Dimension}
\end{keyword}

\end{frontmatter}

\section{Introduction}

Markov Chain Monte Carlo (MCMC) algorithms have dramatically
increased the use of Bayesian and quasi-Bayesian methods for
practical estimation and inference. (See e.g. books of Casella and
Robert \cite{RC}, Chib \cite{Chib-Chapter}, Geweke
\cite{Gew-Chap}, Liu \cite{Liu-Book} for detailed treatments of
the MCMC methods and their applications in various areas of
statistics, econometrics, and biometrics.)  Bayesian methods rely
on a likelihood formulation, while quasi-Bayesian methods replace
the likelihood with other criterion functions. This paper studies the
computational complexity of MCMC algorithms (based on Metropolis
random walks) as both the sample and parameter dimensions grow to
infinity at the appropriate rates. The paper shows how and when the
large sample asymptotics places sufficient restrictions on the
likelihood and criterion functions that guarantee the efficient --
that is, polynomial time -- computational complexity of these
algorithms. These results suggest that at least in large samples,
Bayesian and quasi-Bayesian estimators can be computationally
efficient alternatives to maximum likelihood and extremum
estimators, most of all in cases where likelihoods and criterion
functions are non-concave and possibly non-smooth in the parameters of
interest.

To motivate our analysis, let us consider the Z-estimation problem, which is a basic method for estimating various kinds of structural models, especially in biometrics and econometrics.  The idea behind this approach is to maximize some criterion function: \bsnumber\label{Qn} Q_n\(\theta\) = -
\left\| \frac{1}{\sqrt{n}} \sum_{i=1}^n  m (U_i,\theta) \right\|^2, \quad \theta \in \Theta
\subset \RR^d,
\end{split}\end{align}
where $U_i$ is a vector of random variables, and $m(U_i, \theta) $ is a vector of functions such that $E[m(U_i, \theta)] = 0$ at the true parameter value $\theta = \theta_0$.
 For example, in estimation of conditional $\alpha$-quantile models with censoring and endogeneity, the functions take the form
\bsnumber\label{quantile example}
m(U_i, \theta) = W ( \alpha/p_i(\theta) -  1( Y_i \leq X_i\theta))Z_i.
\end{split}\end{align}
Here $U_i=(Y_i,X_i,Z_i)$, $Y_i$ is the response variable, $X_i$ is a vector of
regressors; in the censored regression models, $Z_i$ is the same as $X_i$, and
$p_i(\theta)$ is a weighting function that depends on the probability of censoring that depends on $X_i$ and $\theta$ (see \cite{LTW} for extensive motivation and details), and in the endogenous models, $Z_i$ is  a vector of instrumental variables that affect the outcome variable $Y_i$ only through $X_i$ (see \cite{CH} for motivation and details), while $p_i(\theta)=1$ for each $i$;  the matrix $W$ is some positive definite weighting matrix. Finally, the index $\alpha \in (0,1)$ is the quantile index, and $X_i'\theta$ is the model for the $\alpha$-th quantile function of the outcome $Y_i$.

In these quantile examples, the criterion function $Q_n(\theta)$ is highly discontinuous and non-concave, implying that the argmax estimator may be difficult or impossible to obtain.  Figure \ref{Fig:1} in Section \ref{Sec:Setup} illustrates this example and similar examples where the argmax computation is intractable, at least when the parameter dimension $d$ is high.  In typical applications, the parameter dimension $d$ is indeed high in relation to the sample size (see e.g. Koenker \cite{K1988} for a relevant survey).  Similar issues can also arise in M-estimation problems, where the extremum criterion function takes the form, $Q_n\(\theta\) = \sum_{i=1}^n m(U_i,\theta)$, where $U_i$ is  a vector of random variables, and $m(U_i,\theta)$ is a real-valued function, for example, the log-likelihood function of $U_i$ or some other pseudo-log-likelihood function. Section \ref{Sec:Appl} discusses several examples of this kind.

As an alternative to argmax estimation in both the Z- and M-estimation frameworks, consider the quasi-Bayesian estimator obtained by integration in place of optimization:
\begin{equation}\label{qBest} \displaystyle \hat \theta =
\frac{\displaystyle \int_{\Theta} \theta \exp\{Q_n(\theta)\}
d\theta}{\displaystyle \int_{\Theta} \exp\{Q_n(\theta')\} d
\theta'}.
\end{equation}
This estimator may be recognized as a quasi-posterior mean of the quasi-posterior density $\pi_n(\theta) \propto \exp{Q_n(\theta)}$. (Of course, when $Q_n$ is a log-likelihood, the term ``quasi" becomes redundant.) This estimator is not affected by local discontinuities and non-concavities and is often much easier to compute in practice than the argmax estimator, particulary in the high-dimensional setting; see, for example, the discussion in Liu, Tian, and Wei \cite{LTW} and Chernozhukov and Hong \cite{CH}.

At this point, it is worth emphasizing that we will formally
capture the high parameter dimension by using the framework of
Huber \cite{Huber}, Portnoy \cite{Portnoy}, and others.  In this
framework, we have a sequence of models (rather than a fixed
model) where the parameter dimension grows as the sample size
grows, namely, $d \to \infty $ as $n \to \infty$, and we will
carry out all of our analysis in this framework.

This paper will show that if the sample size $n$ grows to infinity and the dimension of the problem $d$ does not grow too quickly relative to the sample size, the quasi-posterior
 \bsnumber\label{QB}
\frac{\displaystyle \exp\{Q_n(\theta)\}}{\displaystyle
\int_{\Theta} \exp\{Q_n(\theta')\} d \theta'}
 \end{split}\end{align}
will be approximately normal. This result in turn leads to the main claim: the estimator (\ref{qBest}) can be computed using Markov Chain Monte Carlo in \textit{polynomial time}, provided that the starting point is drawn from the approximate support of the quasi-posterior
(\ref{QB}). As is standard in the literature, we measure running time in the number of evaluations of the numerator of the quasi-posterior function (\ref{QB}) since this accounts for most of the computational burden.

In other words, when the central limit theorem (CLT) for the quasi-posterior holds, the estimator (\ref{qBest}) is computationally tractable. The reason is that the CLT, in addition to implying the approximate normality and attractive estimation properties of the estimator $\hat \theta$, bounds non-concavities and discontinuities of $Q_n(\theta)$ in a specific manner that implies that the computational time is polynomial in the parameter dimension $d$. In particular, in the leading cases the bound on the running time of the algorithm after the so-called burn-in period is $O_p(d^2)$. Thus, our main insight is to bring the structure implied by the CLT into the computational complexity analysis of the MCMC algorithm for computation of (\ref{qBest}) and sampling from (\ref{QB}).

Our analysis of computational complexity builds on several
fundamental papers studying the computational complexity of
Metropolis procedures, especially Applegate and Kannan \cite{AK},
Frieze, Kannan and Polson \cite{FKP}, Polson \cite{P}, Kannan,
Lov\'asz and Simonovits \cite{KLS}, Kannan and Li \cite{KLi},
Lov\'asz and Simonovits \cite{LS}, and Lov\'asz and Vempala
\cite{LV01,LV02,LV03}. Many of our results and proofs rely upon
and extend the mathematical tools previously developed in these
works. We extend the complexity analysis of the previous
literature, which has focused on the case of an arbitrary concave
log-likelihood function, to the nonconcave and nonsmooth cases.  The
motivation is that, from a statistical point of view, in concave
settings it is typically easier to compute a maximum likelihood or
extremum estimate than a Bayesian or quasi-Bayesian estimate, so
the latter do not necessarily have practical appeal.  In contrast,
when the log-likelihood or quasi-likelihood is either nonsmooth, nonconcave, or both, Bayesian and quasi-Bayesian estimates defined
by integration are relatively attractive computationally, compared
to maximum likelihood or extremum estimators defined by
optimization.

Our analysis relies on statistical large sample
theory. We invoke limit theorems for posteriors
and quasi-posteriors for large samples as $n \to
\infty$. These theorems are necessary to support
our principal task -- the analysis of the
computational complexity under the restrictions
of the CLT.  As a preliminary step of our
computational analysis, we state a CLT for
quasi-posteriors and posteriors under parameters of increasing dimension, which extends the
CLT previously derived in the literature for
posteriors and quasi-posteriors for fixed
dimensions. In particular, Laplace c. 1809, Blackwell \cite{Blackwell}, Bickel
and Yahav \cite{BY}, Ibragimov and Hasminskii
\cite{IH}, and Bunke and Milhaud \cite{BM}
provided CLTs for posteriors.  Blackwell \cite{Blackwell}, Liu, Tian,
and Wei \cite{LTW}, and Chernozhukov and Hong
\cite{CH} provided CLTs for quasi-posteriors
formed using various non-likelihood criterion
functions. In contrast to these previous results,
we allow for increasing dimensions. Ghosal
\cite{G2000} previously derived a CLT for
posteriors with increasing dimension for log-concave
exponential families. We go beyond this canonical
setup and establish the CLT for the non-log-concave and
discontinuous cases. We also allow for general
criterion functions to replace likelihood
functions. This paper also illustrates the
plausibility of the approach using exponential families,
curved exponential families, and Z-estimation problems.  The curved
families arise for example when the data must
satisfy additional moment restrictions, as e.g.
in Hansen and Singleton \cite{hansen:singleton},
Chamberlain \cite{chamberlain}, and Imbens
\cite{imbens}. Both the curved exponential families and Z-estimation problems typically fall outside
the log-concave framework.

The rest of the paper is organized as follows.  In Section \ref{Sec:Setup}, we
establish a generalized version of the Central Limit Theorem for
Bayesian and quasi-Bayesian estimators.  This result may be seen
as a generalization of the classical Bernstein-Von-Mises theorem,
in that it allows the parameter dimension to grow as the sample
size grows.  In Section \ref{Sec:Setup},
we also formulate the main problem, which is to characterize the
complexity of MCMC sampling and integration as a function of the
key parameters that describe the deviations of the quasi-posterior
from the normal density. Section \ref{Sec:Sampling} explores the structure set
forth in Section \ref{Sec:Setup} to find bounds on conductance and mixing time
of the MCMC algorithm. Section \ref{Sec:Integration} derives bounds on the integration
time of the standard MCMC algorithm. Section \ref{Sec:Appl} considers an application to
a broad class of curved exponential families and Z-estimation problems, which  have possibly
non-concave and discontinuous criterion functions, and verifies that our results apply
to this class of statistical models.  Section \ref{Sec:Appl} also verifies that the high-level conditions of Section \ref{Sec:Setup} follow from the primitive conditions for these
models.

\begin{remark}[Notations.]
Throughout the paper, we follow the framework of high dimensional parameters introduced in Huber (1973). In this framework the parameter $\theta^{(n)}$ of the model, the parameter
space $\Theta^{(n)}$, its dimension $d^{(n)}$, and all other
properties of the model itself are indexed by the sample size $n$, and  $d^{(n)} \to \infty$ as $n \to \infty$. However, following Huber's convention, we will omit the index and
write, for example,  $\theta$, $\Theta$, and $d$  as abbreviations
for $\theta^{(n)}$, $\Theta^{(n)}$, and $d^{(n)}$, and so on.
\end{remark}

\section{The Setup and The Problem}\label{Sec:Setup}

Our analysis is motivated by the problems of estimation and
inference in large samples under high dimension. We consider a ``reduced-form" setup
formulated in terms of parameters that characterize local
deviations from the true parameter value.  The local
parameter $\lambda$ describes contiguous deviations from the true
parameter shifted by a first order approximation to an
extremum estimator $\tilde \theta$. That is, for $\theta$ denoting a parameter
vector, $\theta_0$ the true value, and $s= \sqrt{n} (\tilde \theta
- \theta_0)$ the normalized first order
approximation of the extremum estimator, we define the local parameter $\lambda$
as
$$
\lambda = \sqrt{n} (\theta - \theta_0) - s.
$$
The parameter space for $\theta$ is $\Theta$, and the parameter
space for $\lambda$ is therefore $\Lambda = \sqrt{n}(\Theta -
\theta_0) -s$.

The corresponding localized likelihood or localized criterion function is denoted by $\ell(\lambda)$. For example, suppose $L_n(\theta)$ is the
original likelihood function in the likelihood framework or, more
generally, $L_n(\theta)$ is $\exp\{Q_n(\theta)\}$ where
$Q_n(\theta)$ is the criterion function in extremum framework, then
$$
\ell(\lambda) = L_n(\theta_ 0 + (\lambda
+s)/\sqrt{n})/L_n(\theta_0 + s/\sqrt{n}).
$$
The assumptions below will be stated directly in terms of
$\ell(\lambda)$. In Section \ref{Sec:Appl}, we further illustrate the connection between the localized set-up and the non-localized set-ups and provide more primitive conditions within the exponential family, curved exponential family, and Z-estimation framework.

Then, the posterior or quasi-posterior density for  $\lambda$
takes the form, implicitly indexed by the sample size $n$,
\be\label{Def:f} f(\lambda) = \frac{\ell(\lambda)}{\int_{\Lambda}
\ell(\omega) d \omega}, \ee
 and we impose conditions that force the posterior to satisfy a CLT in the sense
 of approaching the normal density
\be\label{Def:phi}
\phi(\lambda) =  \frac{1}{ (2\pi)^{d/2} \det{(J^{-1})}^{1/2}} \exp
\(- \frac 12 \lambda' J \lambda\).
 \ee
More formally, the following conditions are assumed to hold for
$\ell(\lambda)$ as the sample size and parameter dimension grow to infinity:
$$
n \to \infty \ \text{ and } d \to \infty.
$$
We call these conditions the ``CLT
conditions":
 \begin{itemize}

\item[\textbf{C.1}]  The local parameter $\lambda$ belongs to the
local parameter space $\Lambda \subset \Bbb{R}^d$. The
vector $s$ is a zero mean vector with variance $\Omega$, whose
eigenvalues are bounded above as $n \to \infty$, and $\Lambda = K
\cup K^c$, where $K$ is a closed ball $B(0,\|K\|)$ such that $\int_{K} f(\lambda) d\lambda \geq 1-o_p(1)$
and $\int_{K} \phi(\lambda) d\lambda\geq 1-o(1)$.

\item[\textbf{C.2}] The lower semi-continuous posterior or
quasi-posterior function $\ell(\lambda)$ approaches a quadratic
form in logs, uniformly in $K$, i.e., there exist positive
approximation errors $\epsilon_1$ and $\epsilon_2$ such that for
every $\lambda \in K$, \be\label{Bounde1e2}
\left|\ln \ell(\lambda) - \left( - \frac{1}{2}\lambda' J \lambda\right) \right| \leq \epsilon_{1} + \epsilon_{2} \cdot \lambda' J \lambda/2,%
\ee %
\noindent where $J$ is a symmetric positive definite matrix with
eigenvalues bounded away from zero and from above uniformly in the
sample $n$. Also, we denote the ellipsoidal norm induced by $J$ as
$\|v\|_J := \| J^{1/2}v\|$.

\item[\textbf{C.3}] The approximation errors $\epsilon_1$ and
$\epsilon_2$ satisfy $\epsilon_{1}  = o_p(1)$, and $
\epsilon_{2}\cdot \|K\|^2_J = o_p(1)$.
\end{itemize}

\begin{remark}\label{comment: on conditions}
We choose the support set $K = B(0, \|K\|)$, which is a ball of radius $\|K\| =\sup_{\lambda \in K} \|\lambda\|$,  as follows. Under increasing dimension, the normal density is subject to a concentration of measure, namely that selecting $\|K \| \geq  C \cdot \sqrt{d}$, for a sufficiently large constant $C$, is enough to contain the support of the standard normal vector. Indeed, let $Z \sim N(0, I_d)$, then $Pr(Z \not \in K) = Pr (\|Z\|^2 > C^2 d) \to 0$ for $C>1$ as $d \to \infty$, because
$\|Z\|^2/d \to_p 1$.  For the case where  $W \sim N(0, J^{-1}) =
J^{-1/2} Z$, we have that $Pr(W \not \in K) \leq Pr (
\|Z\|/\sqrt{\lambda_{\min}} > \|K\|) \to 0$ for $\|K\| \geq C
\sqrt{d/\lambda_{\min}}$ for $C> 1$ as $d \to \infty$, where  $\lambda_{\min}$
denotes the smallest eigenvalue of $J$.  Moreover,  since $\|K\|_J = \lambda_{\max} \|K\| $,  where $\lambda_{\max}$
denotes the largest eigenvalue of $J$,
we need to have that $\|K\|_J > \sqrt{d\lambda_{\max}/\lambda_{\min}} $.  In view of condition C.3, this requires $\epsilon_2 d \lambda_{\max}/\lambda_{\min} = o_p(1)$
and hence $\epsilon_2 d = o_p(1)$.    Thus, in some of the computations presented below, we will set
$$
\| K \| = C \sqrt{d/\lambda_{\min}} \text{ and }  \|K\|_J= C \sqrt{d\lambda_{max}/\lambda_{min}} \text{ for } C>1.
$$
Finally, even though we make the assumption of bounded eigenvalues of $J$,  we will emphasize the dependence on the eigenvalues in most proofs and formal statements. This  will allow us to see immediately the impact of changing this assumption.
\end{remark}

These conditions imply that
$$\ell(\lambda) = a(\lambda)\cdot m(\lambda)$$
\noindent over the approximate support set $K$, where
\begin{equation}\label{Def:g}
 \ln a(\lambda) =   -\frac{1}{2}\lambda' J \lambda,
\end{equation}
\begin{equation}\label{Def:m}
- \epsilon_1 - \epsilon_2 \lambda'J\lambda/2 \leq \ln m(\lambda)
\leq \epsilon_1 + \epsilon_2 \lambda'J\lambda/2.
\end{equation}
Figure \ref{Fig:1} illustrates the kinds of deviations of $\ln
\ell(\lambda)$ from the quadratic curve captured by the parameters
$\epsilon_1$ and $\epsilon_2$, and also shows the types of
discontinuities and non-convexities permitted in our framework.
Parameter $\epsilon_1$ controls the size of local discontinuities
and parameter $\epsilon_2$ controls the global tilting away from
the quadratic shape of the normal log-density.

\begin{figure}
\begin{center}
\includegraphics[width=0.6\textwidth]{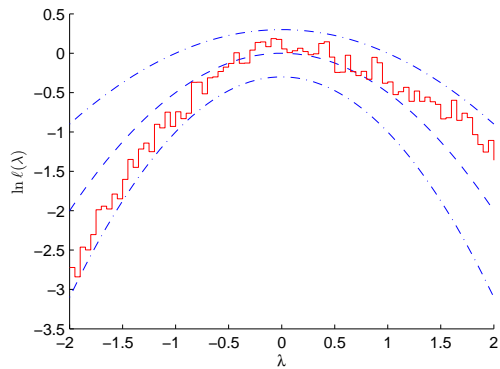}
\end{center}
\caption{This figure illustrates how $\ln \ell(\lambda)$ can
deviate from {\bf $ \ln a(\lambda)$}, allowing for possible
discontinuities in $\ln \ell(\lambda)$.}\label{Fig:1}
\end{figure}

\begin{theorem}[Generalized CLT for Quasi-Posteriors]\label{Thm:CLT}
Under conditions C.1-C.3, the quasi-posterior density (\ref{Def:f}) approaches the normal density (\ref{Def:phi}) in the following sense:
$$\int_{\Lambda}
|f(\lambda) - \phi(\lambda)| d \lambda = o_p(1). $$
\end{theorem}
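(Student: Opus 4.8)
The plan is to split the total-variation integral as $\int_\Lambda |f-\phi| = \int_K |f-\phi| + \int_{K^c}|f-\phi|$ and handle the two regions by different arguments. The tail region is essentially free: since $f$ and $\phi$ are probability densities (with $\phi$ normalized over $\mathbb R^d$), $\int_{K^c}|f-\phi| \le \int_{K^c} f + \int_{K^c}\phi = (1 - \int_K f) + (1 - \int_K \phi)$, which is $o_p(1) + o(1) = o_p(1)$ directly from C.1. So the substance is to show $\int_K |f - \phi| = o_p(1)$, and for that it suffices to prove that $f/\phi$ is within $o_p(1)$ of $1$ uniformly over $\lambda \in K$: then $\int_K |f-\phi| = \int_K \phi\,|f/\phi - 1| \le o_p(1)\cdot \int_K \phi \le o_p(1)$.

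To control $f/\phi$ on $K$, set $C_a := (2\pi)^{d/2}\det(J^{-1})^{1/2}$, so that $\phi(\lambda) = a(\lambda)/C_a$ with $a(\lambda) = \exp(-\tfrac12\lambda'J\lambda)$ as in \eqref{Def:g}, and let $Z := \int_\Lambda \ell(\omega)\,d\omega$ be the normalizing constant of $f$. On $K$, the factorization $\ell = a\cdot m$ of \eqref{Def:g}--\eqref{Def:m} gives $f(\lambda)/\phi(\lambda) = \frac{\ell(\lambda)/Z}{a(\lambda)/C_a} = (C_a/Z)\,m(\lambda)$. By \eqref{Def:m} together with $\lambda'J\lambda = \|\lambda\|_J^2 \le \|K\|_J^2$ on $K$, we have $|\ln m(\lambda)| \le \epsilon_1 + \epsilon_2\|K\|_J^2/2$ uniformly on $K$, which is $o_p(1)$ by C.3; hence $\sup_{\lambda\in K}|m(\lambda) - 1| = o_p(1)$. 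It then remains only to show $C_a/Z = 1 + o_p(1)$.

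For the normalizing constant, sandwich $\int_K \ell$ using \eqref{Def:m}: from $a(\lambda)m(\lambda) \le e^{\epsilon_1}\exp(-\tfrac12(1-\epsilon_2)\lambda'J\lambda)$ and the analogous lower bound with $(1+\epsilon_2)$, integrating over $K$ — enlarging the domain to $\mathbb R^d$ for the upper bound, and for the lower bound substituting $\mu = (1+\epsilon_2)^{1/2}\lambda$ and using C.1's $\int_K \phi \ge 1 - o(1)$ — yields $e^{-\epsilon_1}(1+\epsilon_2)^{-d/2}(1-o(1))\,C_a \le \int_K \ell \le e^{\epsilon_1}(1-\epsilon_2)^{-d/2}\,C_a$. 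The decisive point is that $(1\pm\epsilon_2)^{-d/2} = \exp(\mp\tfrac d2 \ln(1\pm\epsilon_2)) = 1 + o_p(1)$, which holds because $d\,\epsilon_2 = o_p(1)$: indeed $\|K\|_J^2 \gtrsim d$ for the ball $K$ carrying the mass of the $N(0,J^{-1})$ law (see the remark following C.3), so C.3's $\epsilon_2\|K\|_J^2 = o_p(1)$ forces $\epsilon_2 d = o_p(1)$. Hence $\int_K \ell = (1 + o_p(1))\,C_a$. Since $Z \ge \int_K \ell$, this gives $Z \ge (1-o_p(1))C_a$; and since C.1 gives $\int_K f = \int_K \ell / Z \ge 1 - o_p(1)$, we also get $Z \le \int_K \ell\,/(1-o_p(1)) \le (1+o_p(1))C_a$. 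Therefore $C_a/Z = 1 + o_p(1)$, whence $\sup_{\lambda\in K}|f(\lambda)/\phi(\lambda) - 1| = o_p(1)$, and combining with the tail bound completes the proof.

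I expect the normalizing-constant step to be the main obstacle. The algebra is light, but this is exactly where the increasing dimension interacts with the permitted deviations: a pointwise Gaussian tilt of size $\epsilon_2$ is raised to the power $d/2$ in the partition function, so without a condition forcing $d\,\epsilon_2 \to 0$ the ratio $C_a/Z$ need not tend to $1$. Condition C.3 — in the form $\epsilon_2\|K\|_J^2 = o_p(1)$, equivalent under the eigenvalue bounds on $J$ to $\epsilon_2 d = o_p(1)$ — is precisely calibrated for this. The remaining ingredients (the C.1 tail estimate and the pointwise closeness of $m$ to $1$) are routine.
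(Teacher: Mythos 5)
Your proof is correct and takes essentially the same route as the paper's: split $\Lambda$ into $K$ and $K^c$, dispose of the tail via C.1, and on $K$ control $f/\phi$ by combining the pointwise C.2 bound on $\ln m$ with a Gaussian-integral sandwich showing the normalizing-constant ratio tends to $1$, the crux in both arguments being that C.3 (via $\epsilon_2\|K\|_J^2=o_p(1)$ and $\|K\|_J^2 \gtrsim d$) forces $\epsilon_2 d = o_p(1)$ so that $(1\pm\epsilon_2)^{\mp d/2}\to 1$. If anything you are slightly more careful than the paper in explicitly reconciling $Z=\int_\Lambda\ell$ with $\int_K\ell$ (the paper's $C_n$ is defined with $\int_K\ell$ in the denominator while $f$ is normalized by $Z$, a gap the paper leaves implicit but which your use of C.1 to bound $\int_K f$ closes cleanly).
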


 Theorem \ref{Thm:CLT} is a simple preliminary result.
However, the result is essential for defining the environment in
which the main results of this paper -- the computational complexity
results -- will be developed. The theorem shows that in large
samples, provided that some regularity conditions hold, Bayesian and
quasi-Bayesian inference have good large sample properties. The
main part of the paper, namely Section \ref{Sec:Sampling}, develops the
\textit{computational implications} of the CLT conditions. In
particular, Section \ref{Sec:Sampling}  shows that polynomial time computing of
Bayesian and quasi-Bayesian estimators by MCMC is in fact implied
by the CLT conditions.   Therefore,  the CLT
conditions are essential for both good statistical properties of the posterior or quasi-posterior under increasing dimension, as shown in Theorem \ref{Thm:CLT}, and for good computational properties as shown in Section \ref{Sec:Sampling}.

By allowing increasing dimension ($d\to \infty$) Theorem
\ref{Thm:CLT} extends the CLT previously derived in the literature
for posteriors in the likelihood framework (Blackwell \cite{Blackwell}, Bickel and Yahav
\cite{BY}, Ibragimov and Hasminskii \cite{IH}, Bunke and Milhaud
\cite{BM}, Ghosal \cite{G2000}, Shen \cite{Shen}) and for
quasi-posteriors in the general extremum framework, when the
likelihood is replaced by general criterion functions (Blackwell \cite{Blackwell}, Liu, Tian, and Wei \cite{LTW} and Chernozhukov and Hong \cite{CH}). The
theorem also extends the results in Ghosal \cite{G2000}, who also
considered increasing dimensions but focused his analysis to the
exponential likelihood family framework. In contrast, Theorem
\ref{Thm:CLT} allows for non-exponential families and for
quasi-posteriors in place of posteriors. Recall that
quasi-posteriors result from using quasi-likelihoods and other
criterion functions in place of the likelihood. This substantially expands
the scope of the applications of the result.
Importantly, Theorem \ref{Thm:CLT} allows for non-smoothness and
even discontinuities in the likelihood and criterion functions,
which are pertinent in a number of applications listed in the
introduction.

\textbf{The Problem of the Paper.} Our problem is to characterize
the complexity of obtaining draws from $f(\lambda)$ and of Monte
Carlo integration for computing $$\int g(\lambda) f(\lambda) d\lambda,$$ where
$f(\lambda)$  is restricted to the approximate support $K$. The
procedure used to obtain the basic draws as well as to carry out
Monte Carlo integration is a Metropolis random walk, which is a
standard MCMC algorithm used in practice. The tasks are thus:

\begin{itemize}

\item[\textbf{I}.\ \ ] Characterize the complexity of sampling
from $f(\lambda)$ as a function of $(d, n, \epsilon_{1},
\epsilon_{2},
K)$;%

 \item[\textbf{II}.\ \ ] Characterize the complexity of calculating
$\int g(\lambda) f(\lambda) d\lambda$
as a function of $(d, n, \epsilon_{1}, \epsilon_{2},K)$;%

\item[\textbf{III}.\ ] Characterize the complexity of sampling
from $f(\lambda)$ and performing integrations with $f(\lambda)$ in
large samples as $d,n \to \infty$ by invoking the bounds on $(d,
n, \epsilon_{1}, \epsilon_{2}, K)$ imposed by the
CLT;%
\item[\textbf{IV}.] Verify that the CLT conditions are applicable
in a variety of statistical problems.
 \end{itemize}
This paper formulates and solves this problem. Thus,  the paper
brings the CLT restrictions into the complexity analysis and
develops complexity bounds for sampling and integrating from
$f(\lambda)$ under these restrictions. These CLT restrictions,
arising from the use of large sample theory and the imposition of certain
regularity conditions, limit the behavior of $f(\lambda) $ over
the approximate support set $K$ in a specific manner that allows
us to establish polynomial computing time for sampling and
integration. Because the conditions for the CLT do not provide
strong restrictions on the tail behavior of $f(\lambda)$ outside
$K$ other than C.1, our analysis of complexity is limited entirely
to the approximate support set $K$ defined in C.1-C.3.

By solving the above problem, this paper contributes to the recent
literature on the computational complexity of Metropolis
procedures. Early work was primarily concerned with the question of
approximating the volume of high dimensional convex sets where
uniform densities play a fundamental role (Lov\'asz and Simonovits
\cite{LS}, Kannan, Lov\'asz and Simonovits \cite{KLS,KLS97}).
Later, the approach was generalized for the cases where the
log-likelihood is concave (Frieze, Kannan and Polson \cite{FKP},
Polson \cite{P}, and Lov\'asz and Vempala \cite{LV01,LV02,LV03}).
However, under log-concavity the maximum likelihood or extremum
estimators are usually preferred over Bayesian or quasi-Bayesian
estimators  from a computational point of view. Regarding cases in which log-concavity is absent, the settings in which there is great practical
appeal for using Bayesian and quasi-Bayesian estimates, have received little treatment in the literature. One important exception is
the paper of Applegate and Kannan \cite{AK}, which covers
nearly-log-concave but smooth densities using a discrete Metropolis
algorithm. In contrast to Applegate and Kannan \cite{AK}, our approach allows
for both discontinuous and non-log-concave densities that are
permitted to deviate from the normal density (not from an
arbitrary log-concave density, like in Applegate and Kannan
\cite{AK}) in a specific manner. The manner in which they deviate
from the normal is motivated by the CLT and controlled by
parameters $\epsilon_1$ and $\epsilon_2$, which are in turn
restricted by the CLT conditions. Using the CLT restrictions also
allows us to treat non-discrete sampling algorithms. In fact, it
is known that the canonical Gaussian walk analyzed in Section
\ref{Sec:Geo} does not have good complexity properties (rapidly
mixing) for arbitrary log-concave density functions, see Lov\'asz
and Vempala \cite{LV03}. Nonetheless, the CLT conditions imply
enough structure so that even a canonical Gaussian walk becomes in fact
rapidly mixing. Moreover, the analysis is general in that it
applies to any Metropolis chain, provided that it satisfies a simple
geometric condition.  We illustrate this condition with the
canonical algorithm.  This suggests that the same approach can be
used to establish polynomial bounds for various more sophisticated
schemes. Finally, as is standard in the literature, we assume that
the starting point for the algorithm occurs in the approximate
support of the posterior. Indeed, the polynomial time bound that
we derive applies only in this case because this is the domain
where the CLT provides enough structure on the problem. Our
analysis does not apply outside this domain.

\section{The complexity of sampling using random walks}\label{Sec:Sampling}

\subsection{Set-Up and Main Result} In this section we bound the computational
complexity of obtaining  a draw from a random variable
approximately distributed according to a density function $f$ as
defined in (\ref{Def:f}). (Section \ref{Sec:Integration} builds
upon these results to study the associated integration problem.)
By invoking condition C.1, we restrict our attention entirely to
the approximate support set $K$ and the accuracy of sampling will
be defined over this set. Consider a measurable space
$(K,\mathcal{A})$. Our task is to draw a random variable according
to a density function $f$ restricted to $K$. This
density induces a probability distribution on $K$ defined by
 $Q(A) = \int_A f(x)dx/\int_Kf(x)dx$ for any $A \in \mathcal{A}$.
Asymptotically, it is well-known that random walks combined with a
Metropolis filter are capable of performing such a task. Such random
walks are characterized by an initial point $u_0$ and a {\it
one-step} probability distribution, which depends on the current
point, to generate the next candidate point of the random walk.
The candidate point is accepted with a probability given by the
Metropolis filter, which depends on the likelihood function $\ell$,
on the current and on the candidate point, and otherwise the random
walk stays at the current point (see Casella and Robert \cite{RC}
and Vempala \cite{VSurvey} for details; Section \ref{Sec:Geo}
describes the canonical Gaussian random walk).

In the complexity analysis of this algorithm we are interested in
bounding the number of steps of the random walk required to draw a
random variable from $Q$ with a given precision. Equivalently, we
are interested in bounding the number of evaluations of the local
likelihood function $\ell$ required for this purpose.

Next, following  Lov\'asz and
Simonovits \cite{LS} and Vempala \cite{VSurvey}, we review definitions of concepts relevant for our
analysis.  Let $q(x|u)$
denote the probability density to generate a
candidate point and $1_u(A)$ be the indicator function of the set
$A$. For each $u \in K$ the one-step distribution $P_u$, the
probability distribution after one step of the random walk
starting from $u$, is defined as
\begin{equation} P_u (A) = \int_{K \cap A}
\min\left\{\frac{f(x)q(u|x)}{f(u)q(x|u)},1\right\} q(x|u) dx +
(1-p_u) 1_{u}(A),
\end{equation} \noindent where \begin{equation} p_u =  \int_{K}
\min\left\{\frac{f(x)q(u|x)}{f(u)q(x|u)},1\right\} q(x|u)
dx\end{equation} is the probability of making a proper move,  namely the move to $x \in K, x \neq u$, after one step of the chain from $u \in K$.

The triple $(K,\mathcal{A},\{P_u: u\in K\})$, along with a
starting distribution $Q_0$, defines a Markov chain in $K$. We
denote by $Q_t$ the probability distribution obtained after $t$
steps of the random walk. A distribution $Q$ is called stationary
on $(K,\mathcal{A})$ if for any $A \in \mathcal{A}$,
\begin{equation} \int_K P_u(A) dQ(u) = Q(A). \end{equation}
Given the random walk described earlier, the unique stationary
probability distribution $Q$ is induced by the function $f$, $Q(A)
= \int_A f(x) dx/\int_Kf(x)dx$ for all $A \in \mathcal{A}$, see
e.g. Casella and Roberts \cite{RC}. This is the main motivation
for most of the MCMC studies found in the literature since it
provides an asymptotic method to approximate the density of
interest. As mentioned before, our goal is to properly quantify
this convergence and for that we need  to review additional
concepts.

The ergodic flow of a set $A$ with respect to a distribution $Q$
is defined as $$ \Phi(A) = \int_A P_u (K\backslash A)dQ(u).$$ It
measures the probability of the event $\{ u \in A, u' \notin A \}$
where $u$ is distributed according to $Q$ and $u'$ is distributed according to $P_u$; it captures the average flow of points leaving $A$ in one step of the random
walk. The measure  $Q$ is stationary  if and only if
$\Phi(A) = \Phi(K \backslash A)$ for all $A \in \mathcal{A}$ since
$$\begin{array}{rcl}
\Phi(A) & = &\displaystyle \int_{A} P_u(K\setminus A) dQ(u) =
\int_{A} (1 - P_u(A)) \ dQ(u) \\
\\
&= &\displaystyle Q(A) - \int_{A} P_u(A) dQ(u) = \int_K P_u(A)
dQ(u) - \int_{A} P_u(A) dQ(u) \\
\\
&= & \Phi(K\setminus A).\end{array}
$$ A Markov chain is said to be ergodic if $\Phi(A)>0$ for every
$A$ with $0 < Q(A) <1$, which is the case for the Markov chain
induced by the random walk described earlier due to the
assumptions on $f$, namely conditions C.1 and C.2.

 Next we recall the concept of a conductance of a Markov chain,
  which plays a key role in the convergence
analysis. Intuitively, a Markov chain will
converge slowly to the steady state if there exists a set $A$ in
which the Markov chain stays ``too long" relative to the measure
of $A$ or its complement $K\backslash A$. In order for a Markov
chain to stay in $A$ for a long time, the probability of stepping out
of $A$ with the random walk must be small, that is, the ergodic flow of $A$ must be small relative to the measures of $A$ and $K\backslash A$. The concept of conductance of a set $A$ quantifies this notion:
$$ \phi(A) = \frac{\Phi(A)}{\min\{Q(A),Q(K\backslash A)\}}, \ \ 0 < Q(A) < 1.$$
\noindent The global conductance of the Markov chain is the minimum conductance over sets with positive measure
\begin{equation} \phi = \inf_{A \in \mathcal{A}: 0 < Q(A)<1} \phi(A). 
 \end{equation}

Lov\'asz and Simonovits \cite{LS} proved the connection between conductance and
convergence for the continuous state space, and Jerome and Sinclair  \cite{JS:conf,JS:pub} proved
the connection for the discrete state space. We will extensively use Corolary 1.5 of Lov\'asz and Simonovits \cite{LS}, restated here as follows:
Let $Q_0$ be $M$-warm with respect to the stationary
distribution $Q$, namely
\begin{equation}\label{Def:M} \sup_{A \in
\mathcal{A}: Q(A) > 0} \frac{Q_0(A)}{Q(A)} =M,
\end{equation}
 then, the total variation distance between the stationary distribution $Q$ and the distribution $Q_t$, obtained after $t$ steps of the Markov chain starting from $Q_0$, is bounded above by a function of global conductance $\phi$ and warmness parameter $M$:
\begin{equation}\label{LS}
 \| Q_t - Q \|_{TV} = \sup_{A \in \mathcal{A}} |Q_t(A) - Q(A)| \leq \sqrt{M} \left( 1 - \frac{\phi^2}{2} \right)^t.
 \end{equation}

 Therefore, the global conductance $\phi$ determines the number of
steps required to generate a random point whose distribution $Q_t$ is
within a specified distance of the target distribution $Q$. The
conductance $\phi$ also bounds the autocovariance between
consecutive elements of the Markov chain,  which is important for
analyzing the computational complexity of integration by MCMC;
see Section \ref{Sec:Integration} for a more detailed
discussion.  The warmness parameter $M$, which measures how
the starting distribution $Q_0$ differs from the target distribution $Q$,
also plays an important role in determining the quality of convergence of $Q_t$ to $Q$. In what follows, we will calculate $M$ explicitly for the canonical random walk.

The main result of this paper provides a lower bound for the
global conductance of the Markov chain $\phi$ under the CLT
conditions. In particular, we show that $1/\phi$ is bounded by a
fixed polynomial in the dimension of the parameter space even for
a canonical random walk considered in Section
\ref{Sec:Geo}. In order to show this, we require the
following geometric condition on the difference between the
one-step distributions.
\begin{itemize}
\item[\textbf{D.1}]{\em There exist positive sequences $h_n$ and
$c_n$ such that for every $u, v \in K$, $\|u-v\| \leq h_n$ implies
that}
$$ \| P_u - P_v \|_{TV} < 1 - c_n. $$
\item[\textbf{D.2}] {\em The sequences above can be taken to
satisfy the following bounds}

$$ \frac{1}{ c_n \min\{h_n\sqrt{\lambda_{min}},1\}} = O_p(d).
$$

\end{itemize}

Condition D.1 holds if at least a $c_n$-fraction of the probability
distribution associated with $P_u$ varies smoothly as the
point $u$ changes. Condition D.2 imposes a particular rate for the
sequences. As shown in Theorem
\ref{Thm:Main} below, the rates in Conditions D.1 and D.2 play an important role in delivering good, that is, polynomial time, computational complexity. We show in Section \ref{Sec:Geo} that Conditions D.1 and D.2 hold for the canonical Gaussian
walk under Conditions C.1, C.2, and C.3. with $$
 1/h_n = O_p(d) \ \ \mbox{and} \ \ 1/c_n = O_p(1),
$$
and $\lambda_{min}$ bounded away from zero. Moreover, the rates in Condition D.2 appear to be sharp for the canonical Gaussian walk under our framework. It remains an important question whether different types of random walks could lead to better rates  than those in Condition D.2 (see Vempala \cite{VSurvey} for a relevant survey). Another interesting question is the establishment of lower bounds on the computational complexity of the type considered in Lov\'asz \cite{L-HR}.

Next we state the main result of the section.

\begin{theorem}[Main Result on Complexity of Sampling]\label{Thm:Main}  Under
Conditions  C.1, C.2, and D.1, the global conductance of the
induced Markov chain satisfies
\begin{equation}\label{Eq:phi}{ 1/\phi = O\left(
\frac{ e^{2(\epsilon_1 + \epsilon_2
\|K\|_J^2/2)}}{c_n \min\{ h_n\sqrt{\lambda_{\min}}, 1 \}}
 \right)}. \end{equation} In particular, a random walk satisfying these assumptions requires at most
\begin{equation} \label{Eq:first bound} N_\varepsilon = O_p\( e^{4(\epsilon_1 + \epsilon_2 \|K\|_J^2/2)}    \
\frac{ \ \ln (M/\varepsilon)}{ (c_n \min\{ h_n\sqrt{\lambda_{min}}, 1
\})^2} \)
 \end{equation} steps to achieve $\|
Q_{N_\varepsilon} - Q \|_{TV} \leq \varepsilon$
where $Q_0$ is $M$-warm with respect to $Q$.
Finally, if Conditions C.1, C.2, C.3,
D.1 and D.2 hold, we have that
$$1/\phi = O_p( d )$$ and the number of steps
{\bf $N_\varepsilon$} is bounded by
\begin{equation}\label{Eq:Fbound} O_p \( d^2 \ \ln (M/\varepsilon) \).
\end{equation}
\end{theorem}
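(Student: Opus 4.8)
The plan is to follow the conductance method: first establish the lower bound on the global conductance $\phi$ in~(\ref{Eq:phi}), then turn this into the mixing-time bounds~(\ref{Eq:first bound}) and~(\ref{Eq:Fbound}) through the Lov\'asz--Simonovits estimate~(\ref{LS}), and finally read off the $O_p(d)$ and $O_p(d^2\ln(M/\varepsilon))$ rates by invoking the rate restrictions in Conditions C.3 and D.2.

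For the conductance bound I would fix an arbitrary measurable $A$ with $0<Q(A)<1$, set $B=K\setminus A$, and assume without loss of generality that $Q(A)\le Q(B)$ (permissible since $\Phi(A)=\Phi(B)$, hence $\phi(A)=\phi(B)$). Following the standard recipe, introduce the deep-interior sets $A_1=\{u\in A:P_u(B)<c_n/2\}$ and $B_1=\{u\in B:P_u(A)<c_n/2\}$, and put $A_3=K\setminus(A_1\cup B_1)=(A\setminus A_1)\cup(B\setminus B_1)$. The threshold $c_n/2$ is chosen precisely so that for $u\in A_1$ and $v\in B_1$ we have $\|P_u-P_v\|_{TV}\ge P_u(A)-P_v(A)>(1-c_n/2)-c_n/2=1-c_n$, which by Condition D.1 is impossible unless $\|u-v\|>h_n$; hence $\mathrm{dist}(A_1,B_1)\ge h_n$. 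Now split into two cases. If $Q(A_1)<Q(A)/2$ (or, symmetrically, $Q(B_1)<Q(B)/2$), then at least half the $Q$-mass of $A$ lies in the layer $A\setminus A_1$ where $P_u(B)\ge c_n/2$, so $\Phi(A)=\int_AP_u(B)\,dQ(u)\ge \tfrac{c_n}{2}Q(A\setminus A_1)\ge\tfrac{c_n}{4}Q(A)$ and $\phi(A)\ge c_n/4$. Otherwise $Q(A_1)\ge Q(A)/2$ and $Q(B_1)\ge Q(B)/2$, and the remaining work is to lower-bound $Q(A_3)$.

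This last step is where Condition C.2 enters. On $K$ the density factors as $f=a\cdot m$ with $\ln a(\lambda)=-\tfrac12\lambda'J\lambda$ and $|\ln m(\lambda)|\le\epsilon_1+\epsilon_2\|K\|_J^2/2=:\beta$ (equations~(\ref{Def:g}) and~(\ref{Def:m})), so $Q$ agrees up to a multiplicative factor $e^{2\beta}$ in either direction with the Gaussian measure $N(0,J^{-1})$ restricted to the ball $K$ --- a log-concave measure whose largest directional standard deviation is $1/\sqrt{\lambda_{\min}}$. Applying a Gaussian-type isoperimetric inequality to this restricted measure for the partition $K=A_1\cup B_1\cup A_3$ with $\mathrm{dist}(A_1,B_1)\ge h_n$, and transferring it back to $Q$ at the cost of the factor $e^{2\beta}$, gives $Q(A_3)\gtrsim e^{-2\beta}\min\{h_n\sqrt{\lambda_{\min}},1\}\min\{Q(A_1),Q(B_1)\}\ge\tfrac12 e^{-2\beta}\min\{h_n\sqrt{\lambda_{\min}},1\}\,Q(A)$. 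Since $P_u$ of the opposite side is at least $c_n/2$ on all of $A_3=(A\setminus A_1)\cup(B\setminus B_1)$, adding the two representations $\Phi(A)=\int_A P_u(B)\,dQ(u)$ and $\Phi(A)=\Phi(B)=\int_B P_u(A)\,dQ(u)$ yields $2\Phi(A)\ge\tfrac{c_n}{2}Q(A_3)$, hence $\phi(A)\gtrsim c_n e^{-2\beta}\min\{h_n\sqrt{\lambda_{\min}},1\}$. Taking the infimum over $A$ --- and noting that since $\beta\ge0$ this bound is no larger than the $c_n/4$ bound from the first case --- gives~(\ref{Eq:phi}). For the rest: from~(\ref{LS}), $\|Q_t-Q\|_{TV}\le\sqrt M(1-\phi^2/2)^t\le\sqrt M\,e^{-t\phi^2/2}$, so $N_\varepsilon=O(\phi^{-2}\ln(M/\varepsilon))$, which after substituting~(\ref{Eq:phi}) is~(\ref{Eq:first bound}); and under Condition C.3, $2\beta=2\epsilon_1+\epsilon_2\|K\|_J^2=o_p(1)$ so $e^{O(\beta)}=O_p(1)$, while Condition D.2 gives $1/(c_n\min\{h_n\sqrt{\lambda_{\min}},1\})=O_p(d)$, so $1/\phi=O_p(d)$ and $N_\varepsilon=O_p(d^2\ln(M/\varepsilon))$, which is~(\ref{Eq:Fbound}).

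The main obstacle is the isoperimetric inequality used in the second case: it has to be dimension-free, with the relevant length scale set by the largest standard deviation $1/\sqrt{\lambda_{\min}}$ of the Gaussian rather than by $\diam(K)\asymp\sqrt{d/\lambda_{\min}}$ --- using the diameter would inject a spurious $\sqrt d$ and only deliver $O_p(d^{3/2})$ steps. Obtaining the correct scale calls for either a Gaussian isoperimetric inequality adapted to the truncation to the ball $K$ (legitimate here because, by Condition C.1, $K$ carries all but an $o(1)$ fraction of the Gaussian mass) or the localization/needle technique of Lov\'asz and Simonovits applied to the log-concave measure $N(0,J^{-1})|_K$, in either case with careful bookkeeping of the multiplicative perturbation $e^{\pm\beta}$ from C.2. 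A more routine secondary point is to verify that the canonical Gaussian walk satisfies D.1--D.2 with $1/h_n=O_p(d)$ and $1/c_n=O_p(1)$, which is done in Section~\ref{Sec:Geo}.
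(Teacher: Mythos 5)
Your proposal is correct and follows essentially the same route as the paper: the same three-set decomposition $K=\widetilde S_1\cup\widetilde S_2\cup\widetilde S_3$ by the threshold $c_n/2$, the same two-case split, the same use of D.1 to lower-bound $d(\widetilde S_1,\widetilde S_2)$ by $h_n$, the same KLS-localization isoperimetric inequality at scale $\sqrt{\lambda_{\min}}$ (paper's Corollary~\ref{Corollary:Iso}), and the same invocation of the Lov\'asz--Simonovits estimate~(\ref{LS}) to pass from conductance to mixing time. The only cosmetic difference is how the $e^{-2(\epsilon_1+\epsilon_2\|K\|_J^2/2)}$ factor enters: you propose comparing $Q$ to the truncated Gaussian measure globally and paying $e^{\pm2\beta}$ on ratios, whereas the paper proves the isoperimetric inequality directly for $(\text{Gaussian})\times(\text{log-}\beta\text{-concave})$ densities and absorbs the factor inside the one-dimensional needle reduction via Lemma~\ref{STRbr}; both deliver the same bound.
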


Thus, under the CLT conditions, Theorem \ref{Thm:Main} establishes the polynomial bound on the computing time, as stated in equation (\ref{Eq:Fbound}). Indeed, CLT conditions C.1 and C.2 first lead to the bound (\ref{Eq:first bound}) and, then, condition C.3, which imposes  $\epsilon_1 = o_p(1)$ and $\epsilon_2 \cdot \|K\|_J^2 = o_p(1)$,  leads to the polynomial bound (\ref{Eq:Fbound}).  It is also useful to note that, if the stated CLT conditions do not hold, the bound on the computing time needs not be polynomial: in particular, the first bound (\ref{Eq:first bound}) is exponential in $\epsilon_1$ and $\epsilon_2\|K\|^2_J$.  It is also useful to note that the approximate normality of posteriors and quasi-posteriors implied by the CLT conditions plays an important role in the proofs of this main result and of auxiliary lemmas.  Therefore, the CLT conditions are essential for both (a) good statistical properties of the posterior or quasi-posterior under increasing dimension, as shown in Theorem \ref{Thm:CLT} and (b) for good computational properties, as shown in Theorem \ref{Thm:Main}. Thus, results (a) and (b) establish a clear link between the computational properties and the statistical environment.

The relevance of the particular random walk in bounding the
conductance is captured through the parameters $c_n$ and $h_n$
defined in condition D.1. Theorem \ref{Thm:Main} shows that
as long as we can take $1/c_n$ and $1/h_n$ to be bounded by a
polynomial in the dimension of the parameter space $d$, we will
obtain polynomial time guarantees for the sampling problem. In
some cases, the warmness parameter $M$ appearing in (\ref{Eq:Fbound}) can also be related to the particular random walk being used. This is the case in the canonical random walk discussed in detail in
Section \ref{Sec:Geo}.

\subsection{Proof of the Main Result}\label{Sec:ProofMain}

The proof of Theorem \ref{Thm:Main} relies on a new
iso-perimetric inequality (Corollary 1) and a geometric property of the particular random walk
(condition D.1). After the connection between the
iso-perimetric inequality and the ergodic flow is established, the geometric property allows us to use the first result to bound the
conductance from below.  In what follows we provide an outline of
the proof, auxiliary results, and, finally, the formal proof.

\subsubsection{Outline of the Proof}  The proof follows the arguments in
Lov\'asz and Simonovits \cite{LS} and Lov\'asz and Vempala \cite{LV01}. In order to bound the ergodic flow of $A \in
\mathcal{A}$, consider the particular disjoint partition
$K=\widetilde S_1 \cup \widetilde S_2 \cup \widetilde S_3$ where
$\widetilde S_1 \subset A$, $\widetilde S_2 \subset K\setminus A$,
and $\widetilde S_3$ consists of points in $A$ or $K\setminus A$
for which the one-step probability of going to the other set is at
least $c_n/2$ (to be defined later). Therefore we have
$$
\begin{array}{rl}
\Phi(A) & =  \int_{A} P_u(K\setminus A) dQ(u) = \frac{1}{2}
\int_{A} P_u(K\setminus A) dQ(u) + \frac{1}{2} \int_{K\setminus A} P_u(A) dQ(u)\\
& \geq  \frac{1}{2}\int_{\widetilde S_1} P_u(K\setminus A) dQ(u) +
\frac{1}{2} \int_{\widetilde S_2} P_u(A) dQ(u) + \frac{c_n}{4}
Q(\widetilde S_3).
\end{array}
$$ where the second equality holds because $\Phi(A) = \Phi(K\setminus
A)$.

Since the first two terms could be arbitrarily small, the result
will follow by bounding the last term from below. This will be
achieved by a new iso-perimetric inequality tailored to the
CLT framework and derived in Section \ref{Sec:LogConc}. This
result will provide a lower bound on $Q(\tilde S_3)$, which is
increasing in the distance between $\widetilde S_1$ and
$\widetilde S_2$.

Therefore, it remains to show that the distance
between $\widetilde S_1$ and
$\widetilde S_2$ is suitably bounded below. This follows
from the geometric property stated in condition D.1.
Given two points $u \in \widetilde S_1$ and $v \in \widetilde
S_2$, we have $P_u(K\setminus A) \leq c_n/2$ and $P_v(A)\leq
c_n/2$. Therefore, the total variation distance between their
one-step distributions is bounded as $$\| P_u - P_v \|_{TV} \geq |P_u(A) - P_v(A)|
\geq 1 - c_n.$$ In such a case, condition D.1 implies that the distance $\|u-v\|$
is bounded from below by $h_n$. Since $u$ and $v$ are arbitrary
points, the distance between sets $\widetilde S_1$ and $\widetilde S_2$ is bounded below by $h_n$.

This leads to a lower bound for the global conductance. After
bounding the global conductance from below, Theorem \ref{Thm:Main} follows
by invoking the conductance theorem of  \cite{LS} restated in equation (\ref{LS}) and the CLT conditions.

\subsubsection{An Iso-perimetric
Inequality}\label{Sec:LogConc}

We start by defining  a notion of approximate log-concavity. A
function $f:\RR^d \to \RR $ is said to be log-$\beta$-concave if
for every $\alpha \in [0,1]$, $x,y \in \RR^d$, we have
$$f\left(\alpha x + (1-\alpha) y \right) \geq \beta
f(x)^{\alpha} f(y)^{1-\alpha} $$ for some $\beta \in (0,1]$, and $f$
is said to be log-concave if $\beta$ can be taken to be one. The
class of log-$\beta$-concave functions is rather broad, including, for
example, various non-smooth and discontinuous functions.

This concept is relevant under our CLT conditions C.1-C.3, since the relations (\ref{Def:g}) and (\ref{Def:m}) imposed by these conditions imply the following:

\begin{lemma}\label{Lemma1} Over the set $K$, the functions $f(\lambda) := \ell(\lambda)/\int_{\Lambda}
\ell(\lambda) d\lambda$ and $\ell(\lambda)$ can be written as the product of a
Gaussian function, $e^{-\frac{1}{2}\lambda'J\lambda}$, and a
log-$\beta$-concave function with parameter
$$
\beta = e^{-2(\epsilon_1 +\epsilon_2\|K\|_J^2/2)}.
$$
\end{lemma}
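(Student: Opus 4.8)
The plan is to unwind the definition of $\ell(\lambda)$ furnished by conditions C.1--C.3 and simply exhibit the required factorization. By \eqref{Def:m}, over $K$ we have $\ell(\lambda) = a(\lambda) m(\lambda)$, where $\ln a(\lambda) = -\frac12 \lambda' J \lambda$ is the Gaussian factor and $m(\lambda)$ satisfies $|\ln m(\lambda)| \leq \epsilon_1 + \epsilon_2 \lambda' J \lambda / 2$. So the only thing to prove is that $m(\lambda)$ is log-$\beta$-concave on $K$ with the stated $\beta$, and then that the normalized density $f$ admits the same kind of factorization (which is immediate, since dividing by the constant $\int_\Lambda \ell$ just rescales $m$ by a positive constant, and log-$\beta$-concavity is preserved under positive scaling — or one can simply absorb a bounded constant into a slightly worse $\beta$, but in fact scaling by a constant does not change $\beta$ at all).

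First I would verify the log-$\beta$-concavity of $m$. Fix $\alpha \in [0,1]$ and $x, y \in K$; assume also $\alpha x + (1-\alpha)y \in K$ (this is where one uses that $K = B(0,\|K\|)$ is convex, so the midpoint stays in $K$ and the bound \eqref{Def:m} applies there). Then
\[
\ln m(\alpha x + (1-\alpha) y) \geq -\epsilon_1 - \epsilon_2 (\alpha x + (1-\alpha)y)' J (\alpha x + (1-\alpha)y)/2,
\]
while
\[
\alpha \ln m(x) + (1-\alpha)\ln m(y) \leq \epsilon_1 + \epsilon_2\big(\alpha\, x'Jx + (1-\alpha)\, y'Jy\big)/2.
\]
Subtracting, and using that the quadratic form $\lambda \mapsto \lambda' J \lambda = \|\lambda\|_J^2$ is convex (so the value at the convex combination is at most the convex combination of the values), the difference
\[
\ln m(\alpha x + (1-\alpha)y) - \alpha \ln m(x) - (1-\alpha)\ln m(y)
\]
is bounded below by $-2\epsilon_1 - \epsilon_2\big(\alpha x'Jx + (1-\alpha)y'Jy\big)$, and since $x,y \in K$ gives $x'Jx, y'Jy \leq \|K\|_J^2$, this is $\geq -2\epsilon_1 - \epsilon_2 \|K\|_J^2 = \ln \beta$. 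Exponentiating yields $m(\alpha x + (1-\alpha)y) \geq \beta\, m(x)^\alpha m(y)^{1-\alpha}$, which is exactly log-$\beta$-concavity of $m$ over $K$.

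Finally I would assemble the two factorizations. We have $\ell(\lambda) = e^{-\frac12 \lambda' J \lambda} \cdot m(\lambda)$ on $K$ with $m$ log-$\beta$-concave; and $f(\lambda) = \ell(\lambda)/\int_\Lambda \ell(\omega)\,d\omega = e^{-\frac12\lambda'J\lambda} \cdot \widetilde m(\lambda)$ where $\widetilde m(\lambda) = m(\lambda)/\int_\Lambda \ell(\omega)\,d\omega$ is just a positive constant multiple of $m$, hence still log-$\beta$-concave with the same $\beta$ (the constant cancels in the inequality defining log-$\beta$-concavity). This gives both claims. The only genuinely delicate point, and the one worth stating carefully, is the use of convexity of $K$ to guarantee that the bound \eqref{Def:m} is available at the point $\alpha x + (1-\alpha)y$ — the inequality \eqref{Bounde1e2} is only assumed on $K$, so convexity of the support ball $B(0,\|K\|)$ is what makes the midpoint argument go through; everything else is a one-line manipulation of the quadratic-form bound.
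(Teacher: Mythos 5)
Your argument is correct and amounts to the same thing the paper has in mind; the paper's ``proof'' of Lemma~\ref{Lemma1} is literally the one-liner ``the result follows immediately from equations (\ref{Def:g})--(\ref{Def:m})'', and you have simply supplied the missing one-paragraph verification. Your observation about needing convexity of $K$ to guarantee $\alpha x + (1-\alpha)y \in K$ is a legitimate point worth flagging (the bound (\ref{Def:m}) is only asserted over $K$, and the lemma is stated as a factorization over $K$), and your handling of the normalization constant for $f$ is also fine. One minor simplification: the convexity of $\lambda \mapsto \lambda'J\lambda$ is a detour you do not actually need. Since $x$, $y$, and $\alpha x + (1-\alpha)y$ all lie in $K$, each of the three quadratic forms is bounded by $\|K\|_J^2$ directly, so you can bound $\ln m(\alpha x + (1-\alpha)y) \geq -\epsilon_1 - \epsilon_2\|K\|_J^2/2$ and $\alpha \ln m(x) + (1-\alpha)\ln m(y) \leq \epsilon_1 + \epsilon_2\|K\|_J^2/2$ and subtract, arriving at $\ln\beta = -2\epsilon_1 - \epsilon_2\|K\|_J^2$ in one step without invoking Jensen. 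Either route gives the stated $\beta$.
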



The representation of Lemma \ref{Lemma1}
gives us a convenient structure to establish the following
iso-perimetric inequality.

\begin{lemma}\label{betaISO}
Consider any measurable partition of the form $K = S_1 \cup S_2
\cup S_3$ such that the distance between $S_1$ and $S_2$ is at
least $t$, i.e. $d(S_1,S_2) \geq t$. Let $Q(S) = \int_S f dx /
\int_K f dx $. Then for any lower semi-continuous function $f(x) =
e^{-\|x\|^2} m(x)$, where $m$ is a log-$\beta$-concave function,
we have
$$Q(S_3) \geq \beta \frac{2 t e^{-t^2/4}}{\sqrt{\pi}} \min
\left\{ Q(S_1), Q(S_2) \right\}.$$
\end{lemma}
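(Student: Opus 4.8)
The plan is to prove Lemma~\ref{betaISO} by the localization (``needle'') method of Lov\'asz--Simonovits \cite{LS} and Lov\'asz--Vempala \cite{LV01}, the whole point being to exploit the Gaussian factor $e^{-\|x\|^2}$ so that the resulting isoperimetric constant depends only on $t$ and $\beta$ and not on the diameter of $K$ (equivalently, not on $d$). After normalizing $\int_K f\,dx=1$ so that $Q=f\,dx$ on $K$, set $\gamma:=\beta\,\frac{2t e^{-t^2/4}}{\sqrt\pi}$ and suppose, for contradiction, that $Q(S_3)<\gamma\min\{Q(S_1),Q(S_2)\}$, that is, $\gamma^{-1}Q(S_3)<Q(S_1)$ and $\gamma^{-1}Q(S_3)<Q(S_2)$. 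Then the two functions $g:=f\,(1_{S_1}-\gamma^{-1}1_{S_3})$ and $h:=f\,(1_{S_2}-\gamma^{-1}1_{S_3})$ on $K$ have strictly positive integral (after the customary lower-semicontinuous approximation of the indicators). Applying the localization lemma to $g$ and $h$ yields points $a,b$ and a nonnegative affine $\ell$ such that, writing $p(r)$ for the unit-speed parametrization of $[a,b]$ and $d\nu:=\ell(r)^{d-1}f(p(r))\,dr$, the pulled-back sets $A_i:=\{r:p(r)\in S_i\}$ still satisfy $\nu(A_3)<\gamma\min\{\nu(A_1),\nu(A_2)\}$. Because $p$ is an isometric embedding of an interval, $d(A_1,A_2)=d(p(A_1),p(A_2))\ge d(S_1,S_2)\ge t$, so it suffices to contradict this one-dimensional inequality on the needle.

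The reason the reduction is useful is that the needle problem has exactly the same form as the original one. Along the unit-speed parametrization, $\|p(r)\|^2$ is a quadratic in $r$ with leading coefficient exactly $1$, so the density of $\nu$ equals $e^{-(r-r_0)^2}\rho(r)$ for some $r_0\in\RR$ (possibly outside the needle, which is the degenerate monotone case), where $\rho$ collects the log-concave factor $\ell(r)^{d-1}$ and the log-$\beta$-concave factor $m(p(r))$ (any log-linear weight produced by localization being absorbed into the shift $r_0$), hence $\rho$ is itself log-$\beta$-concave. It therefore remains to prove the one-dimensional statement: for $w(r)=e^{-(r-r_0)^2}\rho(r)$ on an interval $I$ with $\rho$ log-$\beta$-concave, and any partition $I=A_1\cup A_2\cup A_3$ with $d(A_1,A_2)\ge t$, one has $\int_{A_3}w\ge\gamma\min\{\int_{A_1}w,\int_{A_2}w\}$. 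A further standard one-dimensional cut reduces this to the case in which $A_1,A_3,A_2$ appear as consecutive subintervals of $I$ with $|A_3|\ge t$; this is the device that kills the possibility that $A_1$ and $A_2$ interleave along the line, and the constant-order loss it causes is built into $\gamma$. In this consecutive case the estimate is a direct computation: log-$\beta$-concavity of $\rho$ bounds $\rho$ on $A_3$ below by $\beta$ times the smaller of its values at the two endpoints of $A_3$ (this produces the factor $\beta$ and lets the $\rho$-weight be transported across the gap), while the Gaussian factor is handled by a sliding argument -- translating the tail $A_2$ leftward by $|A_3|$ into $A_3$, and symmetrically for $A_1$, and bounding the resulting density ratios $w(s+|A_3|)/w(s)=e^{-2|A_3|(s-r_0)-|A_3|^2}\rho(s+|A_3|)/\rho(s)$. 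In this step the width $|A_3|\ge t$ produces the gain $e^{-t^2/4}$ and, crucially, the sub-Gaussian tail bound $\int_x^{\infty}e^{-u^2}du\le e^{-x^2}/(2x)$ keeps the comparison of throat-mass with tail-mass independent of $|A_1|$ and $|A_2|$; collecting the constants, with $\sqrt\pi=\int_{\RR}e^{-u^2}du$, gives exactly $\int_{A_3}w\ge\beta\,\frac{2t e^{-t^2/4}}{\sqrt\pi}\min\{\int_{A_1}w,\int_{A_2}w\}$, the desired contradiction.

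\textbf{The main obstacle} is precisely this diameter-free one-dimensional estimate. The classical log-concave isoperimetric inequality of \cite{LS,KLS}, applied to the needle density, would only give an isoperimetric constant of order $t/\diam(K)=O(t/\sqrt d)$, which is too weak -- the polynomial-time conclusion of Theorem~\ref{Thm:Main} needs the constant to be $\Omega(\beta t)$ -- so the Gaussian factor has to be used in an essential way to cap the ``effective width'' of $w$ at $O(1)$, and pinning the constant to $\beta\,\frac{2t e^{-t^2/4}}{\sqrt\pi}$ requires a somewhat delicate case split according to whether $r_0$ lies inside or outside the needle, run together with the sliding argument above. The two remaining ingredients are routine adaptations of \cite{LS,LV01}: the lower-semicontinuous approximation needed to feed the indicator combinations into the localization lemma, and the one-dimensional cut that removes interleaving of $A_1$ and $A_2$. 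Finally, the representation of Lemma~\ref{Lemma1} is what guarantees that the quasi-posterior of interest has the form $f=e^{-\|x\|^2}m$ required here, once one first makes the linear change of variables carrying $e^{-\frac{1}{2}\lambda'J\lambda}$ to $e^{-\|x\|^2}$.
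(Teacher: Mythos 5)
Your reduction to a one-dimensional needle via the Localization Lemma of Kannan--Lov\'asz--Simonovits, the observation that the needle density factors as a Gaussian $e^{-(r-r_0)^2}$ times a log-$\beta$-concave remainder $\rho$ (absorbing $\ell(r)^{d-1}$ and the needle weight), and the remark that the Gaussian factor is what makes the isoperimetric constant diameter-free all coincide with the paper's proof. Where you diverge is in the final one-dimensional estimate, and that is where the sketch has a real gap.

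The paper handles the 1D step by first invoking a sandwich lemma (Lemma~\ref{STRbr}): any one-dimensional log-$\beta$-concave $\psi$ satisfies $\beta\hat m\le\psi\le\hat m$ for a genuinely log-concave $\hat m$. It then takes the log-linear chord $s_0 e^{s_1 u}$ through $\hat m$ at the two endpoints $w,w+t$ of the gap; log-concavity forces $\hat m\ge s_0 e^{s_1 u}$ inside the gap and $\hat m\le s_0 e^{s_1 u}$ outside, which is exactly the two-sided control needed (a lower bound on the throat, an upper bound on both tails). Substituting and absorbing $e^{s_1 u}$ into the Gaussian as a shift reduces the whole problem to the pure Gaussian inequality
\[
\int_w^{w+t}e^{-u^2}\,du \ \ge\ \frac{2te^{-t^2/4}}{\sqrt\pi}\,
\min\Bigl\{\int_{-\infty}^{w}e^{-u^2}\,du,\ \int_{w+t}^{\infty}e^{-u^2}\,du\Bigr\},
\]
which is Lemma~2.2 of Kannan and Li.

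Your sliding argument supplies only the lower bound in the throat. Log-$\beta$-concavity gives $\rho\ge\beta\min\{\rho(w),\rho(w+t)\}$ on $[w,w+t]$, which yields the factor $\beta$; but when you translate the tail of $A_2$ by $|A_3|$ into the gap, the ratio $\rho(s+|A_3|)/\rho(s)$ appears, and log-$\beta$-concavity alone gives no useful control on it: a log-$\beta$-concave $\rho$ can behave like $\beta$ times $e^{\kappa s}$ with $\kappa$ of arbitrary sign and size, so that ratio can be arbitrarily small, and the Gaussian sliding bound you propose does not see this. The chord device is precisely what eliminates this obstruction, because the exponential part of $\hat m$ folds into the Gaussian center and cancels from both sides; without it (or an equivalent replacement of $\rho$ by a pure exponential), the ``delicate case split according to whether $r_0$ lies inside or outside the needle'' that you flag would have to simultaneously track the slope of $\rho$, and as written the argument does not close. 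A smaller point: the paper never reduces to the case of $A_1,A_3,A_2$ consecutive. It identifies a single subinterval $[w,w+t]\subset S_3$ on the needle and proves the inequality $\int_w^{w+t}\gamma^{d-1}f\ge M\min\{\int_0^w,\int_{w+t}^{\|b-a\|}\}$ for every such $w$; the contradiction with the localized inequality then follows without consolidating the interleaved pieces of $A_1,A_2$. Consulting Lemma~\ref{STRbr} and the derivation of relations (\ref{MainEq:01})--(\ref{MainEq:05}) in the paper would repair the missing half of the estimate.
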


 The iso-perimetric inequality of Lemma \ref{betaISO} states that if two subsets
of $K$ are far apart, the measure of the remaining subset of $K$ should
be comparable to the measure of at least one of the original
subsets.  This iso-perimetric inequality extends the
iso-perimetric inequality in Kannan and Li
\cite{KLi}. The proof builds on
their proof as well as on the ideas in Applegate
and Kannan \cite{AK}. Unlike the inequality in
 Kannan and Li \cite{KLi}, Lemma \ref{betaISO} removes
the smoothness assumptions on $f$,
covering both non-log-concave and discontinuous
cases.

The following corollary extends Lemma \ref{betaISO} to the case of an arbitrary covariance matrix $J$.

\begin{corollary}[Iso-perimetric Inequality]\label{Corollary:Iso}
Consider any measurable partition of the form $K = S_1 \cup S_3
\cup S_2$ such that $d(S_1,S_2) \geq t$, and let $Q(S) = \int_S f
dx / \int_K f dx $. Then, for any lower semi-continuous function
$f(x)$ $=$ $e^{-\frac{1}{2} x'Jx} m(x)$, where $m$ is a
log-$\beta$-concave function and $J$ is positive definite covariance matrix, we have
$$Q(S_3) \geq \beta \ \sqrt{\lambda_{min}}t e^{-\lambda_{min}t^2/8} \ \sqrt{\frac{2 }{\pi}} \min
\left\{ Q(S_1), \ Q(S_2) \right\},$$ where $\lambda_{min}$ denotes
the minimum eigenvalue of $J$.
\end{corollary}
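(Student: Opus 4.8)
The plan is to reduce Corollary~\ref{Corollary:Iso} to Lemma~\ref{betaISO} by a linear change of variables that diagonalizes $J$ and rescales the Gaussian part to the isotropic form $e^{-\|x\|^2}$ assumed there. First I would write $J = U'DU$ with $U$ orthogonal and $D$ diagonal with entries $\lambda_1,\dots,\lambda_d$, all at least $\lambda_{\min}$, and set $y = \sqrt{\tfrac12}\,D^{1/2}Ux$, so that $\tfrac12 x'Jx = \|y\|^2$. Under this map, $f(x) = e^{-\frac12 x'Jx}m(x)$ becomes $\tilde f(y) = e^{-\|y\|^2}\tilde m(y)$ where $\tilde m(y) = m(x(y))$ is still log-$\beta$-concave with the same $\beta$ (log-$\beta$-concavity is preserved by affine reparametrization, since affine maps send the line segment $[x,y]$ to a line segment and preserve convex combinations), and lower semi-continuity is preserved. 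The images $\tilde S_i$ of $S_i$ form a measurable partition of $\tilde K$, and the normalized measures $Q(S_i)$ are unchanged because both numerator and denominator transform by the same Jacobian factor.

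Next I would track what happens to the separation distance. The transformation $x \mapsto y$ has smallest singular value $\sqrt{\lambda_{\min}/2}$, so $d(S_1,S_2)\geq t$ implies $d(\tilde S_1,\tilde S_2)\geq \sqrt{\lambda_{\min}/2}\,t =: \tilde t$. Applying Lemma~\ref{betaISO} in the $y$ coordinates gives
$$Q(S_3) = \tilde Q(\tilde S_3) \geq \beta\,\frac{2\tilde t\, e^{-\tilde t^2/4}}{\sqrt{\pi}}\,\min\{\tilde Q(\tilde S_1),\tilde Q(\tilde S_2)\} = \beta\,\frac{2\sqrt{\lambda_{\min}/2}\,t\, e^{-\lambda_{\min}t^2/8}}{\sqrt{\pi}}\,\min\{Q(S_1),Q(S_2)\}.$$
Since $2\sqrt{\lambda_{\min}/2}/\sqrt{\pi} = \sqrt{\lambda_{\min}}\sqrt{2/\pi}$, this is exactly the claimed bound $Q(S_3)\geq \beta\sqrt{\lambda_{\min}}\,t\,e^{-\lambda_{\min}t^2/8}\sqrt{2/\pi}\,\min\{Q(S_1),Q(S_2)\}$.

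The argument is essentially bookkeeping, so I do not expect a serious obstacle; the one point requiring a little care is verifying that the isotropic form $e^{-\|y\|^2}$ demanded by Lemma~\ref{betaISO} is produced exactly — hence the factor $\sqrt{1/2}$ in the definition of $y$ — and correspondingly that the exponent in the conclusion comes out as $\lambda_{\min}t^2/8$ rather than $\lambda_{\min}t^2/4$. A secondary point is to confirm the monotonicity being used implicitly: Lemma~\ref{betaISO}'s bound $s\mapsto s e^{-s^2/4}$ is applied at $s=\tilde t$, and one may also want to replace $\lambda_i$ by the uniform lower bound $\lambda_{\min}$ in a worst-case direction, which only weakens the bound in the direction-dependent factor; stating the result with $\lambda_{\min}$ throughout is the cleanest and matches the downstream use in the conductance proof.
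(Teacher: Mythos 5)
Your proof is correct and follows essentially the same change-of-variables reduction the paper uses: the paper sets $\tilde x = J^{1/2}x/\sqrt{2}$ (which is your $y$ up to a harmless orthogonal rotation, since $J^{1/2} = U'D^{1/2}U$), observes that log-$\beta$-concavity, lower semi-continuity, and the ratio measures $Q(S_i)$ are affine-invariant, tracks the distance contraction by the smallest singular value $\sqrt{\lambda_{\min}/2}$, and applies Lemma~\ref{betaISO} at $\tilde t = \sqrt{\lambda_{\min}/2}\,t$. One small remark: no monotonicity of $s \mapsto s e^{-s^2/4}$ is actually invoked here—once you verify $d(\tilde S_1,\tilde S_2)\geq\tilde t$, Lemma~\ref{betaISO} applies directly with that $\tilde t$, so the caveat in your last paragraph is not needed.
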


\subsubsection{Proof of Theorem \ref{Thm:Main}} Fix an arbitrary set $A\in \mathcal{A}$ and denote by
$A^c=K\setminus A$ the complement of $A$ with respect to $K$. We
will prove that
\begin{equation}\label{MixingProof} \Phi(A) \geq \frac{ c_n}{4} \beta  \sqrt{\frac{ 2}{\pi e}} \min\left\{ \frac{h_n}{2} \sqrt{\lambda_{min}}, 1 \right\}    \min\{Q(A), Q (A^c)\}, %
\end{equation}
\noindent where $\beta = e^{-2(\epsilon_1 +\epsilon_2\|K\|_J^2/2)}$ is as defined in Lemma \ref{Lemma1}. This result implies the desired bound on the global
conductance $\phi$.

Consider the following auxiliary definitions: $$
\widetilde S_1 = \displaystyle \left\{ u \in A : P_u(A^c) < \frac{c_n}{2} \right\},  %
\widetilde S_2 = \displaystyle \left\{ v \in A^c : P_v(A) < \frac{c_n}{2} \right\},  %
\widetilde S_3 = \displaystyle  K \backslash ( \widetilde S_1 \cup
\widetilde S_2 ).
$$\noindent  In this case  $Q (\widetilde S_1 ) \leq Q
(A)/2$, we have
$$ \begin{array}{rcl}
\Phi(A)& =& \displaystyle \int_{A} P_u(A^c) dQ(u) \geq
\displaystyle \int_{A \backslash \widetilde S_1} P_u(A^c) dQ(u)
\geq \displaystyle \int_{A\backslash \widetilde S_1}
\frac{c_n}{2} dQ(u) \\
\\
&\geq & \displaystyle \frac{c_n}{2}  Q (A\backslash \widetilde S_1) \geq \frac{c_n}{4} Q(A),\\
\end{array}
$$
\noindent which immediately implies the inequality (\ref{MixingProof}).  In the case $Q(\widetilde S_2) \leq Q(A^c)/2$, we apply a similar argument.

In the remaining case  $Q(\widetilde S_1) \geq Q(A)/2$ and $
Q(\widetilde S_2) \geq Q(A^c)/2$, we proceed as follows. Since $\Phi(A) = \Phi(A^c)$ we
have that
$$
\begin{array}{rcl}
\displaystyle \Phi(A) = \int_{A} P_u(A^c) dQ(u) & = & \frac{1}{2}
\int_{A} P_u(A^c) dQ(u) + \frac{1}{2} \int_{A^c} P_v(A) dQ(v)\\
&\geq &  \frac{1}{2} \int_{A\setminus \widetilde S_{1}} P_u(A^c)
dQ(u) + \frac{1}{2} \int_{A^c\setminus \widetilde S_{2}} P_v(A) dQ(v)\\
 & \geq &
\frac{1}{2}\int_{\widetilde S_3} \frac{c_n}{2} dQ(u) =  \frac{c_n}{4} Q(\widetilde S_3),\\ %
\end{array}
$$ where we used that $\widetilde S_3 = K\setminus (\widetilde S_1 \cup \widetilde S_2 ) = (A
\setminus \widetilde S_1)\cup (A^c\setminus \widetilde S_2)$.
Given the definitions of the sets $\widetilde S_1$ and $\widetilde
S_2$, for every $u \in \widetilde S_1$ and $v \in \widetilde S_2$
we have $$ \| P_u - P_v \|_{TV} \geq P_u(A) - P_v(A) =  1 -
P_u(A^c) - P_v(A) \geq 1 - c_n.
$$ \noindent In such a case, by condition D.1, we have that $
\|u-v\| > h_n$ for every $u \in \widetilde S_1$ and $v \in
\widetilde S_2$. Thus, we can apply the iso-perimetric inequality
of Corollary \ref{Corollary:Iso}, with $d(\widetilde S_1,
\widetilde S_2) \geq h_n$, to bound $Q(\widetilde S_3)$. We then obtain
$$
\begin{array}{rcl}
\displaystyle \int_{A} P_u(A^c) dQ(u)
& \geq & \max_{0 \leq t \leq h_n}\frac{c_n}{4} \beta \sqrt{\frac{2 }{\pi}} \sqrt{\lambda_{\min}} \ t e^{-\frac{1}{8}\lambda_{min}t^2}\ \min\{Q(\widetilde S_1),Q(\widetilde S_2)\}\\
& \geq &  \frac{c_n}{4}\beta  \sqrt{\frac{2}{\pi e}} \min\left\{ \frac{h_n}{2} \sqrt{\lambda_{min}}, 1 \right\} \min\{Q(A),Q(A^c)\}.\\
\end{array}
$$where  we used the fact that $\max_{0 \leq t \leq h_n} \sqrt{\lambda_{\min}} t e^{-\frac{1}{8}\lambda_{min}t^2}$ is bounded below by $  \min\left\{ h_n \sqrt{\lambda_{min}}, 2 \right\} e^{-1/2} $ and that $\min\{Q(\widetilde S_1),Q(\widetilde S_2)\} \geq \min\{Q(A),Q(A^c)\}/2$.
Thus, the inequality (\ref{MixingProof}) and the lower bound
on conductance (\ref{Eq:phi}) follow.

The bound (\ref{Eq:first bound}) on the number of steps of the Markov Chain follows from the lower bound on conductance (\ref{Eq:phi}) and the conductance theorem of \cite{LS} restated in equation (\ref{LS}).  The remaining results in Theorem \ref{Thm:Main} follow by invoking the CLT
conditions.$\qed$


\subsubsection{The case of the Gaussian random walk}\label{Sec:Geo}

In order to provide a concrete example of our
complexity bounds, we consider the canonical
random walk induced by a Gaussian distribution.
Such a random walk is completely characterized by
an initial point $u_0$,  a fixed standard
deviation $\sigma>0$, and its {\it one-step}
move. The latter is defined by the procedure of
drawing a point $y$ from a Gaussian
distribution centered at the current point $u$
with covariance matrix $\sigma^2I$, and then if $y \in K$ moving to $y$ with
probability
$\min\{f(y)/f(u),1\}=\min\{\ell(y)/\ell(u),1\}$,
and otherwise staying at $u$.

We start with the following auxiliary result.

\begin{lemma}\label{GeoLips}
Let $a:\RR^n \to \RR$ be a function such that $\ln a$ is Lipschitz
with constant $L$ over a compact set $K$. Then, for every $u \in K$
and $r>0$, $$ \inf_{y \in B(u,r) \cap K}
\[a(y)/a(u)\] \geq  e^{-Lr}. $$
\end{lemma}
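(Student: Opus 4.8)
The plan is to unwind the definition of Lipschitz continuity and then exponentiate. Implicit in the hypothesis that $\ln a$ is Lipschitz (hence finite-valued) on $K$ is that $a>0$ on $K$, so that the ratio $a(y)/a(u)$ is well-defined and positive for $u,y \in K$; I would note this at the outset.

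First, fix $u \in K$ and $r>0$, and take any $y \in B(u,r)\cap K$. Since $y,u \in K$ and $\ln a$ is $L$-Lipschitz on $K$, we have $|\ln a(y) - \ln a(u)| \leq L\,\|y-u\|$. Because $y \in B(u,r)$ we have $\|y-u\| \leq r$, so in particular $\ln a(y) - \ln a(u) \geq -L r$.

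Second, exponentiate: since $t \mapsto e^t$ is monotone, $a(y)/a(u) = \exp\bigl(\ln a(y) - \ln a(u)\bigr) \geq e^{-Lr}$. This bound holds for every $y \in B(u,r)\cap K$ with a constant not depending on $y$, so taking the infimum over such $y$ gives $\inf_{y \in B(u,r)\cap K} \bigl[a(y)/a(u)\bigr] \geq e^{-Lr}$, which is the claim.

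There is no real obstacle here: the statement is essentially immediate from the definition, and the only points requiring a word of care are (i) that positivity of $a$ on $K$ is built into the assumption on $\ln a$, and (ii) that the estimate $\|y-u\|\leq r$ holds whether $B(u,r)$ is taken open or closed, so the same bound applies in either convention.
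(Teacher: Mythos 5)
Your proof is correct and is precisely the argument the paper has in mind; the paper simply states that the result is ``immediate from the stated assumptions'' without spelling it out, and your two-step unwinding (Lipschitz bound, then exponentiate) supplies that omitted computation.
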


Given the ball $K=B(0, \|K\|)$, we can bound the Lipschitz constant of
the function $ -\lambda' J\lambda/2$ by
\be\label{Def:L} L =
\sup_{\lambda \in K} \| J \lambda \| = \lambda_{max}\|K\|.
\ee

We define
the parameter $\sigma$ of the Gaussian random walk as \be\label{Def:sigma}
\displaystyle \sigma = \min \left\{ \frac{1}{4\sqrt{d}L}, \frac{\|K\|}{120d}\right\}.%
\ee Using (\ref{Def:L}) and that $\|K\| > \sqrt{d/\lambda_{\min}}$ it
follows that \be\label{Eq:Bound:sigma} \sigma
\geq\frac{1}{120\lambda_{max}\sqrt{d}\|K\|}. \ee

In order to apply Theorem \ref{Thm:Main} we rely on $\sigma$ being
defined in (\ref{Def:sigma}) as a function of the relevant
theoretical quantities. More practical choices of the parameter,
as in Robert and Rosenthal \cite{RR} and Gelman, Roberts and Gilks
\cite{GRG}, suggest that we tune the parameter to ensure a
particular average acceptance rate for the steps of the Markov
Chain. These cases are exactly the cases covered by our
(theoretical) choice of $\sigma$ (of course, different constant
acceptance rates lead to different constants in the proof of
the theorem). Moreover, a different choice of covariance matrix
for the auxiliary Gaussian distribution can lead to improvements
in practice but, under the assumptions on the matrix $J$, does not
affect the overall dependence on the dimension $d$, which is our
focus here.

Next we verify conditions D.1 and D.2 for the Gaussian random
walk. Although this approach follows that in Lov\'asz and
Vempala \cite{LV01,LV02,LV03}, there are two important differences
which call for a new proof. First, we no longer rely on the
log-concavity of $f$. Second, we use a different random walk.

\begin{lemma}\label{GeoProb}
Let $u,v \in K := B(0,\|K\|)$, suppose that $\sigma \leq \min\{\frac{1}{4\sqrt{d}L}, \frac{\|K\|}{120d} \}$,
and $ \|u- v\| < \frac{\sigma}{8}$, where $L$ is the
Lipschitz constant specified in equation (\ref{Def:L}). Under conditions
C.1-C.2, we have for $\beta=e^{-2(\epsilon_1 +\epsilon_2\|K\|_J^2/2)}$ that
$$\| P_u
- P_v \|_{TV} \leq 1 - \frac{\beta}{3e}. $$

\end{lemma}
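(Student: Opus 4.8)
The plan is to bound the total variation distance between the one-step distributions $P_u$ and $P_v$ from above by constructing a coupling, or equivalently by bounding below the ``overlap'' mass $\int_K \min\{p_u(x),p_v(x)\}\,dx$ of the two one-step transition kernels, where $p_u(x) = \min\{f(x)/f(u),1\}\,q(x|u)$ is the density of the proper-move part of $P_u$. Since $\|P_u - P_v\|_{TV} = 1 - \int \min\{dP_u,dP_v\}$, and the idle (stay-at-$u$) parts contribute nothing to the common mass unless $u=v$, it suffices to show that the proper-move overlap is at least $\beta/(3e)$. First I would write $q(x|u) = (2\pi\sigma^2)^{-d/2}\exp(-\|x-u\|^2/2\sigma^2)$ for the Gaussian proposal and note that, because $\|u-v\| < \sigma/8$, the proposal densities $q(\cdot|u)$ and $q(\cdot|v)$ themselves have large overlap: a standard Gaussian-shift computation gives $\int \min\{q(x|u),q(x|v)\}\,dx \geq 1 - \|u-v\|/(2\sigma) \cdot(\text{const}) \geq$ something close to $1$ (the one-dimensional projection onto the direction $u-v$ is all that matters, and a shift of less than $\sigma/8$ between two Gaussians of standard deviation $\sigma$ leaves overlap well above, say, $1 - 1/16$).

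Next I would control the acceptance factor $\min\{f(x)/f(u),1\}$. Using Lemma \ref{Lemma1}, on $K$ we have $f = a\cdot m$ with $\ln a(\lambda) = -\tfrac12\lambda'J\lambda$ and $m$ log-$\beta$-concave with $\beta = e^{-2(\epsilon_1+\epsilon_2\|K\|_J^2/2)}$; in particular $m(x)/m(u)$ and $m(x)/m(v)$ are bounded below by multiples of $\beta$ on the relevant region. For the Gaussian part $a$, Lemma \ref{GeoLips} together with the Lipschitz bound $L = \lambda_{\max}\|K\|$ from (\ref{Def:L}) gives, for $x$ within a radius of order $\sigma$ of $u$, that $a(x)/a(u) \geq e^{-L\cdot O(\sigma)} \geq e^{-O(1)}$ by the choice $\sigma \leq 1/(4\sqrt d L)$ — so the exponent $L\sigma$ is at most a small constant. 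Combining, on the set of $x$ that both proposals hit with substantial probability, the acceptance ratio from $u$ is at least (const)$\cdot\beta$ and similarly from $v$. The overlap mass of the proper moves is then at least the overlap mass of the proposals times the minimum acceptance factor, which yields a bound of the form $(1-\text{small})\cdot c\,\beta \geq \beta/(3e)$ after pinning down constants; here one must be slightly careful to integrate only over the region $B(u, r\sigma)\cap B(v,r\sigma)\cap K$ for a suitable fixed $r$ (say $r=2$ or $3$), discarding the Gaussian tails, and to check that restricting to $K$ does not destroy too much mass — this is where the other half of the definition of $\sigma$, namely $\sigma \leq \|K\|/(120 d)$, enters, guaranteeing that a ball of radius $O(\sigma)$ around an interior point $u$ stays mostly inside $K = B(0,\|K\|)$ even under increasing dimension.

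The main obstacle I anticipate is the last point: making the ``$K$ does not cut off too much mass'' step uniform in the dimension $d$. A Gaussian ball of radius $\sigma\sqrt d$ around a point $u$ with $\|u\|$ close to $\|K\|$ can have a non-negligible fraction of its mass outside $K$, so one cannot simply say the tails are negligible; instead one needs the quantitative choice $\sigma = O(\|K\|/d)$ to ensure the ``effective radius'' $\sigma\sqrt d = O(\|K\|/\sqrt d)$ is small relative to $\|K\|$, and then a concentration-of-measure argument (as in Remark \ref{comment: on conditions}) shows the escaped mass is $o(1)$, or at worst bounded by a controllable constant that can be absorbed. The bookkeeping of constants — tracking the proposal-overlap loss, the two-sided Lipschitz loss $e^{-L\sigma}$, the factor $\beta$ from $m$, and the boundary loss, and verifying their product exceeds $\beta/(3e)$ — is routine but must be done carefully; I would organize it by first proving a clean lower bound of the form $\int_K\min\{p_u,p_v\}\,dx \geq \beta\cdot(\text{explicit constant})$ and only at the end checking the explicit constant is at least $1/(3e)$ under the stated hypotheses on $\sigma$ and $\|u-v\|$.
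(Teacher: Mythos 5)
Your overall plan matches the paper's: bound $\|P_u - P_v\|_{TV} \le 1 - \int_K \min\{dP_u,dP_v\}$, lower-bound the acceptance ratio $f(x)/f(u)$ over a ball around $u$ via Lemma \ref{Lemma1} and Lemma \ref{GeoLips}, and combine with an overlap bound for the Gaussian proposals. But two of the steps you sketch would not go through as written.

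\textbf{First, the radius of the ball.} You propose integrating over $B(u, r\sigma)\cap B(v,r\sigma)\cap K$ for a \emph{fixed} $r$ (you say ``$r = 2$ or $3$''). In dimension $d$, a Gaussian with covariance $\sigma^2 I$ concentrates on a spherical shell of radius $\sigma\sqrt{d}$, so a ball of radius $3\sigma$ around the center captures a fraction $\Pr(\chi^2_d \le 9) \to 0$ of the proposal mass: nearly everything would be discarded. The paper uses $r = 4\sqrt{d}\,\sigma$, which captures all but about $10^{-4}$ of the mass, and then the relevant Lipschitz factor is $e^{-Lr}$ with $Lr = 4\sqrt{d}\,L\sigma \le 1$ by the hypothesis $\sigma \le \frac{1}{4\sqrt{d}L}$; this $e^{-1}$ is exactly where the $e$ in $\beta/(3e)$ comes from. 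You later refer to the effective radius $\sigma\sqrt{d}$, so you are not unaware of this, but the computation as organized uses the wrong scale.

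\textbf{Second, and more seriously, the boundary of $K$.} You write that once $\sigma\sqrt d = O(\|K\|/\sqrt d) \ll \|K\|$, ``a concentration-of-measure argument (as in Remark \ref{comment: on conditions}) shows the escaped mass is $o(1)$.'' This is false when $u$ is on (or near) $\partial K$. At a boundary point, $K$ looks locally like a half-space, and a Gaussian centered at $u$ puts mass $\approx 1/2$ outside $K$ no matter how small $\sigma\sqrt d/\|K\|$ is; making the effective radius small relative to $\|K\|$ only controls the curvature correction, not the half-space cut. Concentration of measure concentrates the proposal on a shell around $u$, roughly half of which lies outside $K$ for boundary $u$. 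The paper handles this by (i) reducing to the worst case $u,v \in \partial K$, (ii) introducing the supporting half-space $H_z$ (with $z = (u+v)/2$) whose boundary passes through both $u$ and $v$, so by symmetry exactly half the one-step mass lies on the inward side of $H_z$, and (iii) bounding the residual curvature loss by $\int_0^{r^2/R} h(t)\,dt$, which is small because $r^2/R = O(\sqrt d\,\sigma) = O(\|K\|/\sqrt{d})$ by the hypothesis $\sigma \le \|K\|/(120 d)$. The factor $\approx 1/2$ from step (ii), together with the $9/10$ proposal overlap, is what produces the $1/3$ (hence the $3$ in $\beta/(3e)$); it is not an $o(1)$ loss that can simply be ``absorbed.'' Your hedge (``or at worst bounded by a controllable constant'') gestures at this, but without the half-space symmetry argument you have no way to bound the constant.
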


\begin{remark}
Therefore, the Gaussian random walk
satisfies condition D.1 with
\begin{equation}\label{Cte:G}c_n = \frac{\beta}{3e} \ \
\mbox{ and } \ \ h_n = \frac{\sigma}{8}.\end{equation} Under the CLT
framework, i.e. conditions C.1, C.2, and C.3, we have that $c_n$
and $h_n$ as defined in (\ref{Cte:G}) satisfy condition D.2 with $$1/h_n = O_p(d) \text{ and } 1/c_n = O_p(1),$$ and $\lambda_{min}$ bounded away from zero.

By applying Theorem \ref{Thm:Main} to the
Gaussian random walk, the conductance bound
(\ref{Eq:phi}) becomes
$$1/\phi = O\left(  \frac{\lambda_{max}}{\lambda_{min}} \ d \ e^{2(\epsilon_1+\epsilon_2\|K\|_J/2)} \right)
=
O_p(d)$$ and the bound on the number of steps $N_\varepsilon$ in
(\ref{Eq:first bound}) becomes
\begin{equation}\label{Eq:Fbound:G}
O_p\big(d^2\ln ( M /\varepsilon ) \big).\end{equation}
\end{remark}

Next we discuss and bound the dependence on $M$, the ``distance"
of the initial distribution $Q_0$ from the stationary distribution
$Q$ as defined in (\ref{Def:M}). A natural candidate for a
starting distribution $Q_0$ is the one-step distribution
conditional on a proper move from an arbitrary point $u\in K$. Thus,
$$Q_{0} (A) =  p_u^{-1} \cdot \int_{K \cap A}
\min\left\{\frac{f(x)q(u|x)}{f(u)q(x|u)},1\right\} q(x|u) dx,
$$ where $$p_u = \int_{K}
\min\left\{\frac{f(x)q(u|x)}{f(u)q(x|u)},1\right\} q(x|u)
dx$$ is the probability of a proper move, namely the move to $x \in K, x \neq u$, after one
step of the chain from $u \in K$. We
emphasize that, in general, such choice of $Q_0$ could lead to
values of $M$ that are arbitrary large. In fact, this could happen
even in the case of the stationary density being a uniform
distribution on a convex set (see Lov\'asz and Vempala
\cite{LV03}). However, this is not the case under the CLT
framework as shown by the following lemma.

\begin{lemma}\label{InitialDraw} 
Suppose conditions C.1-C.2 hold, then for $\beta=e^{-2(\epsilon_1 +\epsilon_2\|K\|_J^2/2)}$ we have that with a probability $p_u \geq \beta/(3e)$ the random walk makes a proper move.  Moreover, let $u \in K$ and $Q_0$ be the associated one-step distribution
conditional on performing a proper move starting from $u$, then $Q_0$
 is  $M$-warm with respect to $Q$, where
$$
\ln M =  O ( d \ln (\|K\|_J^2) + \|K\|^2_J + \epsilon_1 +
\epsilon_2 \|K\|^2_J ).
$$
Under conditions $\epsilon_1 = o_p(1)$, $ \epsilon_2\|K\|_J = o_p(1)$, and
$\|K\|_J = O(\sqrt{d})$  we have
$$ \ln M = O_p (d \ln d) \text{ and } p_u \geq  1/(3e) +o_p(1).$$
\end{lemma}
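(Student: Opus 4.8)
The statement bundles a lower bound $p_u\ge\beta/(3e)$ on the probability of a proper move with a bound on the warmness $M$ of the one-step starting distribution, and then specializes both to the CLT regime; I would treat these in turn.

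\emph{Lower bound on $p_u$.} For the canonical Gaussian walk the proposal density $q(\cdot\mid u)$ is that of $N(u,\sigma^2 I)$, hence symmetric and bounded by its peak value $q(u\mid u)=(2\pi\sigma^2)^{-d/2}$, and $p_u=\E\big[\min\{f(Y)/f(u),1\}\,\mathbf 1\{Y\in K\}\big]$ for $Y\sim N(u,\sigma^2 I)$. On $K$, Lemma \ref{Lemma1} writes $\ell=a\cdot m$ with $\ln a(\lambda)=-\tfrac12\lambda'J\lambda$; by (\ref{Def:m}) we get $m(x)/m(u)\ge e^{-2\epsilon_1-\epsilon_2\|K\|_J^2}=\beta$ for every $x\in K$, and, since $\ln a$ has Lipschitz constant $L=\lambda_{\max}\|K\|$ on $K$ by (\ref{Def:L}), Lemma \ref{GeoLips} gives $a(x)/a(u)\ge e^{-L\|x-u\|}\ge e^{-2L\sigma\sqrt d}\ge e^{-1/2}$ on $B(u,2\sigma\sqrt d)$, the last step using $\sigma\le 1/(4\sqrt d L)$ from (\ref{Def:sigma}). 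Hence $\min\{f(x)/f(u),1\}\ge e^{-1/2}\beta$ throughout that ball. It then remains to bound the proposal mass landing in $K\cap B(u,2\sigma\sqrt d)$ below by an absolute constant: $\Pr(Y\in B(u,2\sigma\sqrt d))=\Pr(\chi^2_d\le 4d)\to1$, and since $2\sigma\sqrt d\le\|K\|/(60\sqrt d)=o(\|K\|)$ the sphere $\partial K$ is asymptotically flat at this scale, so by symmetry of $N(u,\sigma^2 I)$ about $u$ at least an asymptotic one half of the mass remains inside $K$ (a standard curvature estimate handles $u\in\partial K$). Combining, $p_u\ge e^{-1/2}\beta\,(\tfrac12-o(1))\ge\beta/(3e)$; this is the mechanism underlying Lemma \ref{GeoProb}.

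\emph{Bound on $M$.} The starting distribution $Q_0$ has Lebesgue density $q_0(x)=p_u^{-1}\min\{f(x)/f(u),1\}\,q(x\mid u)$ on $K$, while $Q$ has density $f(x)/\int_K f$; both are strictly positive on $K$ (since $\ell>0$ on $K$ by C.2), so $Q_0\ll Q$ and $M$ equals the $Q$-essential supremum of $dQ_0/dQ$. Using $\min\{f(x)/f(u),1\}\le f(x)/f(u)$ and $q(x\mid u)\le(2\pi\sigma^2)^{-d/2}$,
\[
\frac{dQ_0}{dQ}(x)\le\frac{(2\pi\sigma^2)^{-d/2}\int_K f(y)\,dy}{p_u\,f(u)}=\frac{(2\pi\sigma^2)^{-d/2}\int_K \ell(y)\,dy}{p_u\,\ell(u)},
\]
the normalizing constant of $f$ cancelling in the last step, so $M$ is at most the right-hand side. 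I would then bound $\int_K\ell\le e^{\epsilon_1+\epsilon_2\|K\|_J^2/2}\int_{\RR^d}e^{-\frac12\lambda'J\lambda}\,d\lambda=e^{\epsilon_1+\epsilon_2\|K\|_J^2/2}(2\pi)^{d/2}(\det J)^{-1/2}$ and $\ell(u)\ge e^{-\epsilon_1-(1+\epsilon_2)\|K\|_J^2/2}$, both from (\ref{Def:m}) and $u'Ju\le\|K\|_J^2$. Taking logarithms, substituting $-\ln p_u\le\ln(3e)+2\epsilon_1+\epsilon_2\|K\|_J^2$ from the first part, and using the lower bound (\ref{Eq:Bound:sigma}) on $\sigma$, the boundedness of the eigenvalues of $J$ (so $-\tfrac12\ln\det J=O(d)$), and $d\le\|K\|_J^2$ (a consequence of C.1), collecting terms gives $\ln M=O\big(d\ln(\|K\|_J^2)+\|K\|_J^2+\epsilon_1+\epsilon_2\|K\|_J^2\big)$.

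\emph{CLT regime and the main difficulty.} In the CLT regime---$\epsilon_1=o_p(1)$, $\epsilon_2\|K\|_J^2=o_p(1)$ (condition C.3), and $\|K\|_J=O(\sqrt d)$, so that $\|K\|_J^2=O(d)$ and $\ln(\|K\|_J^2)=O(\ln d)$---the previous display yields $\ln M=O_p(d\ln d)$, while $\beta=e^{-2\epsilon_1-\epsilon_2\|K\|_J^2}=1-o_p(1)$ gives $p_u\ge\beta/(3e)=1/(3e)+o_p(1)$. The only genuinely delicate step is the lower bound on $p_u$: the ball $B(u,2\sigma\sqrt d)$ must be wide enough to catch a constant fraction of the proposal and to sit mostly inside $K$ even when $u$ is near $\partial K$, yet the Metropolis ratio stays bounded below on it only because $\sigma$ is small enough that its radius is also $O(1/L)$, and (\ref{Def:sigma}) is calibrated precisely to reconcile these requirements---this is exactly the estimate behind Lemma \ref{GeoProb}. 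Given $p_u$, the bound on $M$ is a routine worst-case density-ratio estimate and the CLT-regime statements are bookkeeping; conceptually, it is the near-Gaussianity of $f$ forced by the CLT conditions that keeps $M$ finite, whereas without it $M$ can be arbitrarily large even for a uniform $f$.
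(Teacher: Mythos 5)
Your approach is essentially the same as the paper's: bound $p_u$ from below via the Metropolis-ratio lower bound $\beta e^{-Lr}$ (from Lemma~\ref{Lemma1} and Lemma~\ref{GeoLips}) times the proposal mass in $B_u\cap K$, then bound $M$ by controlling the density ratio $dQ_0/dQ$ using the Gaussian peak $(2\pi\sigma^2)^{-d/2}$, the log-$\beta$-concave structure of $\ell$, and the $\sigma$ calibration in (\ref{Eq:Bound:sigma}). Your reorganization of the $M$ bound (bounding $\int_K\ell$ above and $\ell(u)$ below, rather than bounding $dQ_0/dx$ above and $dQ/dx$ below as the paper does) is an equivalent algebraic rearrangement and leads to the same $\ln M$ order.

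The one place where your write-up is looser than the paper: you claim the proposal mass in $B_u\cap K$ is ``at least an asymptotic one half'' and defer to ``a standard curvature estimate'' for boundary points $u\in\partial K$. The paper instead cites the explicit non-asymptotic bound (\ref{bound over H1 and H2}) established in the proof of Lemma~\ref{GeoProb} (the half-space $H_z$ construction, yielding $\int_{H_1\cap K}q(x|u)dx+\int_{H_2\cap K}q(x|v)dx\geq\tfrac{9}{20}-\tfrac{4}{15\sqrt{2\pi}}\geq\tfrac13$), applied with $u=v$ so that $B_u=H_1\cup H_2$. You correctly identify Lemma~\ref{GeoProb} as the source of this estimate, but the geometric argument for $u$ near $\partial K$ --- where the curvature of $K$ can cut off more than half the Gaussian ball unless $\sigma\lesssim\|K\|/d$ --- is exactly the non-trivial step, and it deserves to be cited precisely rather than invoked as ``standard.'' As stated your sketch only gives an asymptotic ($d\to\infty$) version of the bound, whereas the paper's is uniform. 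Since the paper's own proof of this lemma also just cites (\ref{bound over H1 and H2}) rather than rederiving it, this is a minor presentational issue, not a logical gap, provided Lemma~\ref{GeoProb} is taken as already established.
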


\begin{remark}[Overall Complexity for Gaussian Walk]
 The combination of this result with relation (\ref{Eq:Fbound:G}), which was derived from Theorem \ref{Thm:Main}, yields the overall (burn-in
plus post burn-in) running time $$ O_p(d^3 \ln  d ).$$
\end{remark}


\section{The complexity of Monte Carlo integration}\label{Sec:Integration} This section considers our second task
of interest -- that of computing a high dimensional integral of a
bounded real valued function $g$:
\begin{equation}\label{IntDef}
\mu_g = \int_K g(\lambda) dQ(\lambda).
\end{equation}
Theorem \ref{Thm:Main} showed that the CLT conditions provide enough structure to bound the conductance of the Markov chain associated with a particular random walk.
Below we also show how the conductance and CLT-based bounds on conductance impact the computational complexity of calculating (\ref{IntDef}) via standard schemes (long run, multiple runs, and subsampling). These new characterizations complement the
previous well-known characterizations of the error in estimating
(\ref{IntDef}) in terms of the covariance functions of the underlying
chain (Geyer \cite{Geyer}, Casella and Roberts \cite{RC}, and Fishman \cite{fish}).

In what follows, a random variable $\lambda^{t}$ is distributed according to $Q_{t}$, the probability measure obtained after iterating the chain $t$ times, beginning from a starting measure $Q_0$. The chain $\lambda^t, t=0, 1, ...$ has the stationary distribution $Q$. Accordingly, a standard estimate of (\ref{IntDef}), called the long-run (lr) average, takes the form
\begin{equation}\label{IntAppr} \hat \mu_g =
\frac{1}{{N}}\sum_{i=B}^{ B+N} g(\lambda^i),
\end{equation}
discarding the first $B$ draws, the burn-in sample, and using subsequent $N$ draws of the Markov chain.

The dependent nature of
the chain increases the number of post-burn-in draws $N$ needed to achieve a desired
precision compared to the infeasible case of independent draws
from $Q$.  It turns out that, as in the preceding analysis, the
conductance of the Markov chain is crucial
for determining the appropriate $N$.

The starting point of our analysis is a central limit theorem for
reversible Markov chains due to Kipnis and Varadhan \cite{KV}: Consider a reversible
Markov chain on $K$ with a stationary distribution $Q$. The lag $k$
autocovariance of the stationary time series $g(\lambda^i), i =1,2,...$, obtained by starting the Markov chain with
the stationary distribution $Q$ is defined as
$$ \gamma_k = {\rm Cov}_Q\left(g(\lambda^i),g(\lambda^{i+k})\right). $$
Then, for a stationary, irreducible, reversible Markov chain,
\begin{equation}\label{ThmKV}
 { N} E[(\hat \mu_g - \mu_g)^2] \to \sigma^2_g = \sum_{k=-\infty}^{+\infty} \gamma_k,
 \end{equation}

\noindent almost surely. If $\sigma^2_g$ is finite, then
 \begin{equation}\label{ThmKV1}
\sqrt{N}
(\hat \mu_g - \mu_g) \to_d N(0,\sigma^2_g).
 \end{equation}

In our case, $\gamma_0$ is finite since $g$ is bounded. Let us recall a
result, which is due to Lov\'asz and Simonovits \cite{LS}, and which states that
$\sigma^2_g$ can be bounded using the global conductance $\phi$ of
a stationary, irreducible, reversible Markov chain: Let $g$ be a
square integrable function with respect to the
stationary measure $Q$, then
 \begin{equation}\label{Cor:Cov}
|\gamma_k| \leq \left( 1 - \frac{\phi^2}{2}\right)^{|k|} \gamma_0 \ \ \mbox{and} \ \ \sigma^2_g \leq  \gamma_0
\left(\frac{4}{\phi^2}\right).
\end{equation}
We will use these conductance-based bounds to obtain bounds
on the complexity of integration under the CLT conditions.

There exist other methods for constructing the
sequence of draws in constructing estimators of the type (\ref{IntAppr}); we refer to Geyer \cite{Geyer} for a detailed discussion. In addition to the long run (lr) method, we also
consider  the subsample (ss) and multi-start (ms) methods.
Denote the number of post burn-in draws corresponding to each method as $N_{lr}$,
$N_{ss}$, and $N_{ms}$. As mentioned above, the long run method consists of generating
the first point using the starting distribution $Q_0$ and, after the
burn-in period, selecting the $N_{lr}$ subsequent points to
compute the sample average. The subsample method
also uses only one sample path, but the $N_{ss}$ draws  used in
the sample average are spaced out by $S$ steps of
the chain. Finally, the multi-start method uses $N_{ms}$ different
sample paths, initializing each one independently from the
starting probability distribution $Q_0$
and picking the last draw in each
sample path after the burn-in period to compute the average.
Thus, all estimators discussed above take the form
$$\hat \mu_{g} = \frac{1}{N}\sum_{i=1}^N
g(\lambda^{i,B})$$
with the underlying sequence $\lambda^{1,B},
\lambda^{2,B},...,\lambda^{N,B}$ produced as follows:
\begin{itemize}

\item  for lr,  $\lambda^{i,B} = \lambda^{i+B}$, where $B$ is the burn-in period,

\item for ss, $\lambda^{i,B} = \lambda^{iS+B}$, where $S$ is the
number of draws being skipped,

\item  for ms, $\lambda^{i,B}$ are i.i.d. draws from $Q_B$,
that is, $\lambda^{i,B} \sim \lambda^B$ for every $i$.

\end{itemize}

There is a final issue that must be addressed. Both the central limit
theorem of \cite{KV}, restated in equations (\ref{ThmKV}) and
(\ref{ThmKV1}) and the  conductance-based bound of \cite{LS} on
covariances restated in equation (\ref{Cor:Cov}) require that the
initial point be drawn from the stationary distribution $Q$.
However, we are starting the chain from some other distribution
$Q_0$, and in order to apply these results we need to first run
the chain for sufficiently many steps $B$,
 to bring the distribution of the draws $Q_B$ close to $Q$ in total variation
 metric.  This is what we call the burn-in period. However, even after
 the burn-in period there is still a discrepancy between $Q$ and $Q_B$,
 which should be taken into account.  But once $Q_B$ is close to $Q$, we can
 use the results on complexity of integration where sampling starts with $Q$
 to bound the complexity of integration where sampling starts with $Q_B$, where
 the bound depends on the discrepancy between $Q_B$ and $Q$.  Thus, our computational complexity calculations take into account all of the following
  three facts: (i) we are starting with a
distribution $Q_0$ that is $M$-warm with respect to $Q$, (ii) from
$Q_0$ we are making $B$ steps with the chain in the burn-in
period to obtain $Q_B$ such that $\|Q_B - Q\|_{TV}$ is
sufficiently small, and (iii) we are only using draws after the
burn-in period to approximate the integral.

We use the mean square error as the measure of
closeness for a consistent estimator:
$$ MSE( \hat \mu_g ) = E\[ \(\hat \mu_g - \mu_g\)^2\].$$

\begin{theorem}[Complexity of Integration]\label{NNN}
Let $Q_0$ be $M$-warm with respect to $Q$, and let $ \bar
g := \sup_{\lambda \in K} |g(\lambda)|$.
In order to obtain $$MSE(\hat \mu_g)
<\varepsilon$$ it is sufficient to use the following lengths of
the burn-in sample, $B$, and post-burn-in samples, $N_{lr},
N_{ss}, N_{ms}$:
$$\displaystyle B = \left( \frac{2}{\phi^2} \right) \ln
\left(\frac{24\sqrt{M} \bar g^2}{\varepsilon} \right)$$ and
 $$\displaystyle N_{lr} =
\frac{\gamma_0}{\varepsilon} \frac{6}{\phi^2},\ \
 \displaystyle N_{ss} = \frac{3\gamma_0}{\varepsilon} \ \mbox{with} \ S = \frac{2}{\phi^2}\ln \left(\frac{6\gamma_0}{\varepsilon}\right), \ \
 \displaystyle N_{ms}= \frac{2\gamma_0}{3\varepsilon}.$$
The overall complexities of the lr, ss, and ms methods are thus  $B +
N_{lr}$, $B + S N_{ss}$, and $B\times N_{ms}$.
\end{theorem}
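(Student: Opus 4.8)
The plan is to handle the three estimators in parallel, reducing each to a bias–variance decomposition in which the variance is controlled by the Kipnis–Varadhan/Lov\'asz–Simonovits conductance bound \eqref{Cor:Cov} and the bias is controlled by the conductance theorem \eqref{LS}. First I would observe that for any of the three schemes the estimator has the form $\hat\mu_g = N^{-1}\sum_{i=1}^N g(\lambda^{i,B})$, so that
$$
MSE(\hat\mu_g) = \mathrm{Var}(\hat\mu_g) + \big(E[\hat\mu_g] - \mu_g\big)^2,
$$
and the cross term after centering at $E[\hat\mu_g]$ vanishes. The bias term is the same for all three: since each $\lambda^{i,B}$ has marginal law $Q_B$ (for ss one replaces $B$ by $iS+B$, but each such index exceeds $B$ so the total-variation bound still applies), we get $|E[\hat\mu_g]-\mu_g| \le \bar g\,\|Q_B - Q\|_{TV} \le \bar g \sqrt{M}(1-\phi^2/2)^B \le \bar g\sqrt{M} e^{-\phi^2 B/2}$ by \eqref{LS}. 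Choosing $B = (2/\phi^2)\ln(24\sqrt{M}\,\bar g^2/\varepsilon)$ makes the squared bias at most, say, $\varepsilon/6$ (after also absorbing the small additional discrepancy from starting the \emph{variance} computation at $Q_B$ rather than $Q$ — see below). So the burn-in length $B$ is pinned down exactly as stated, and the remaining work is the variance of each scheme.

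For the \textbf{long-run} estimator, I would use that once the chain has (approximately) reached stationarity, $\mathrm{Var}(\hat\mu_{lr}) \approx \sigma_g^2/N_{lr}$ with $\sigma_g^2 \le \gamma_0(4/\phi^2)$ by \eqref{Cor:Cov}; more carefully, $N\,\mathrm{Var} = \gamma_0 + 2\sum_{k=1}^{N-1}(1-k/N)\gamma_k \le \gamma_0(1 + 2\sum_{k\ge1}(1-\phi^2/2)^k) \le \gamma_0(4/\phi^2)$, again from the geometric decay in \eqref{Cor:Cov}. Setting $N_{lr} = (\gamma_0/\varepsilon)(6/\phi^2)$ forces this contribution below $\varepsilon$ up to the constant bookkeeping. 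For the \textbf{multi-start} estimator the draws $\lambda^{i,B}$ are i.i.d.\ copies of $\lambda^B$, so $\mathrm{Var}(\hat\mu_{ms}) = \mathrm{Var}_{Q_B}(g)/N_{ms} \le \gamma_0/N_{ms}$ (up to the $Q_B$-vs-$Q$ slack), and $N_{ms} = 2\gamma_0/(3\varepsilon)$ suffices. For the \textbf{subsample} estimator, the $N_{ss}$ draws are spaced $S$ apart, so the lag-$k$ autocovariance among them is $\gamma_{kS} \le (1-\phi^2/2)^{kS}\gamma_0$; choosing $S = (2/\phi^2)\ln(6\gamma_0/\varepsilon)$ makes $\gamma_S \le \varepsilon/6$, hence $N_{ss}\,\mathrm{Var}(\hat\mu_{ss}) \le \gamma_0(1 + 2\sum_{k\ge1}(1-\phi^2/2)^{kS}) \le \gamma_0(1 + 2\gamma_S/\gamma_0 \cdot (1-\gamma_S/\gamma_0)^{-1}) \le 3\gamma_0$ for small $\varepsilon$, so $N_{ss} = 3\gamma_0/\varepsilon$ works. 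In each case the total running times $B+N_{lr}$, $B+SN_{ss}$, $B\times N_{ms}$ are then exactly the quantities in the statement.

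The main obstacle — and the one I would spend the most care on — is item (iii) in the discussion preceding the theorem: the variance bounds \eqref{ThmKV}–\eqref{Cor:Cov} are stated for a chain \emph{initialized from the stationary distribution} $Q$, whereas our post-burn-in draws come from $Q_B \ne Q$. The honest fix is to bound $\mathrm{Var}_{Q_B}(\cdot)$ and the relevant autocovariances along a $Q_B$-started chain by their $Q$-started counterparts plus an error proportional to $\|Q_B - Q\|_{TV}\cdot \bar g^2$ (using $\|Q_B - Q\|_{TV}\le \sqrt M (1-\phi^2/2)^B$ from \eqref{LS} once more, and boundedness $|g|\le\bar g$). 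This is why the burn-in is chosen so that $\|Q_B - Q\|_{TV}$ is of order $\varepsilon/(\text{const}\cdot\bar g^2 \sqrt M)$: the same $B$ that kills the bias also makes this variance-transfer error negligible. Assembling the three pieces — squared bias $\le \varepsilon/6$, variance-transfer slack $\le$ (another fraction of $\varepsilon$), and the stationary-start variance $\le$ the remaining fraction — gives $MSE(\hat\mu_g) < \varepsilon$, with all constants arranged to produce the displayed $24$, $6$, $3$, and $2/3$. $\qed$
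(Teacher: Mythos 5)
Your per-scheme variance bounds (long-run, subsample, multi-start) are the same as the paper's, but your overall decomposition is genuinely different, and the step you flag as ``the main obstacle'' is exactly where the two approaches diverge. You split $MSE(\hat\mu_g)$ into variance plus squared bias, control the bias with $\|Q_B-Q\|_{TV}$, and then must separately transfer the stationary variance bound $\sigma^2_{g,N}\le 4\gamma_0/\phi^2$ to a chain whose post-burn-in marginal is $Q_B$ rather than $Q$. The paper bypasses this two-step bookkeeping by conditioning on the state at time $B$ and applying a single Radon--Nikodym change of measure: since the conditional MSE given any start is bounded by $4\bar g^2$, it writes
$$
MSE(\hat\mu_g) = E_Q\!\left[\,MSE(\hat\mu_g\mid\lambda)\,\frac{dQ_B}{dQ}\right]
\le \frac{\sigma^2_{g,N}}{N} + 8\,\bar g^2\,\|Q_B-Q\|_{TV},
$$
where $E_Q[MSE(\hat\mu_g\mid\lambda)]=\sigma^2_{g,N}/N$ because the $Q$-started chain is unbiased. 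This one inequality controls both the bias and the $Q_B$-versus-$Q$ variance slack simultaneously; the constant $24$ in the burn-in then comes from allocating $\varepsilon/3$ to the second term and $2\varepsilon/3$ to the stationary variance. Your route would instead split the $\varepsilon$ budget among squared bias, variance-transfer slack (roughly $6\bar g^2\|Q_B-Q\|_{TV}$ if carried out), and stationary variance; the numbers do still close with the same $B$, but note that with the paper's total-variation convention the bias bound should read $|E_{Q_B}[\hat\mu_g]-\mu_g|\le 2\bar g\|Q_B-Q\|_{TV}$, a factor of $2$ you dropped. So your proof is correct in approach but less economical: the conditional-MSE reformulation is precisely the device that turns the variance-transfer step you deferred into a one-line consequence of boundedness of $g$ and \eqref{LS}.
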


For convenience, Table \ref{Table:Cond} tabulates the bounds for
the three different schemes. Note that the dependence on $M$ and $
\bar g$ is only via log terms.   Although the optimal choice of
the method depends on the particular values of the constants, when
$\varepsilon \searrow 0$, the long-run algorithm has the smallest
(best)  bound, while the the multi-start algorithm has the largest
(worst)  bound on the number of iterations.
\begin{table}
\begin{center}
{\small \caption{Burn-in and Post Burn-in Bounds on the Complexity of Integration
of a Bounded Function via
Conductance}\label{Table:Cond}
\begin{tabular}{llll}
\hline \hline \\
Method         &  Quantities   &    Complexity     \\
\hline \\
Long Run       &  $B + N_{lr}$    & $ \frac{2}{\phi^2} \left(
\ln\left( \frac{24\sqrt{M} \bar g^2}{\varepsilon} \right)\right)
 + \frac{2}{\phi^2} \left (\frac{3\gamma_0}{\varepsilon} \right)$  \\
Subsample      &  $B + N_{ss} \cdot S $    & $\frac{2}{\phi^2} \left( \ln\left( \frac{24\sqrt{M} \bar g^2}{\varepsilon} \right)\right) + \frac{2}{\phi^2} \left( \frac{3\gamma_0}{\varepsilon}\ln\left(\frac{24 \gamma_0}{\varepsilon}\right) \right)$   \\
Multi-start    &  $B \times N_{ms} $   & $ \frac{2}{\phi^2} \left(  \ln \left( \frac{24\sqrt{M} \bar g^2}{\varepsilon} \right)\right) \times \frac{2\gamma_0}{3\varepsilon}  $  \\
\hline\hline\\
\end{tabular}
}
\end{center}
\end{table}
Table \ref{Table:Complexity} presents the computational complexities implied by the
CLT conditions, namely $$\|K\|_J = O(\sqrt{d}), \epsilon_1 = o_p(1), \text{
and } \epsilon_2 \|K\|^2_J = o_p(1),$$ and the Gaussian random walk
studied in Section \ref{Sec:Geo}. The table assumes $\gamma_0$ and
$\bar g$ are constant, though it is straightforward to tabulate
the results for the case where $\gamma_0$ and $\bar g$ grow at
polynomial speed with $d$. Finally, note that the bounds apply
under a slightly weaker condition than the CLT requires, namely
that $\epsilon_1 =O_p(1) $ and $\epsilon_2 \|K\|^2_J = O_p(1)$.

\begin{table}
\begin{center}
{\small \caption{ Burn-in and Post Burn-in Bounds on the Complexity of Integration
of a Bounded Function using the
Gaussian random walk under the CLT
framework with $\|K\|_J = O(\sqrt{d}), \epsilon_1 = o_p(1),  \epsilon_2 \|K\|^2_J = o_p(1)$, and $\bar g = O(1)$. }\label{Table:Complexity}
\begin{tabular}{lcl}
\hline \hline \\
Method         &  Burn-in Complexity  &    Post-burn-in  Complexity      \\
\hline \\
Long Run       &  $O_p( d^3\ln d \cdot \ln \varepsilon^{-1} )$    &   $+ \ O_p( d^2 \cdot \varepsilon^{-1}  ) $   \\
Subsample      &  $O_p( d^3\ln d \cdot \ln \varepsilon^{-1} )$    &   $+ \ O_p( d^2 \cdot \varepsilon^{-1} \cdot \ln \varepsilon^{-1}  )$  \\
Multi-start    &  $O_p( d^3\ln d \cdot \ln \varepsilon^{-1} )$  &  $\times \ O_p( \varepsilon^{-1} )$       \\
\hline\hline\\
\end{tabular}
}\end{center}
\end{table}



\section{Applications}\label{Sec:Appl}

In this section we verify that the CLT conditions and the analysis apply
to a variety of statistical problems. In particular, we focus on
the MCMC estimator (\ref{qBest}) as an alternative to $M$- and $Z$-estimators.
Here our goal is to derive the high-level conditions C1-C3 from appropriate
primitive conditions, and thus show the efficient computational complexity of the MCMC estimator.

\subsection{M-Estimation} We present two examples in M-estimation.
We begin with the canonical log-concave cases
within the exponential family. Then we drop the concavity and
smoothness assumptions to illustrate the full applicability of the
approach developed in this paper.

\subsubsection{Exponential Family} Exponential families play a very important role in statistical
estimation, cf. Lehmann and Casella \cite{lehmann}, especially in
high-dimensional contexts; see Portnoy \cite{Portnoy}, Ghosal
\cite{G2000}, and Stone et al. \cite{Stone}. For example, the
high-dimensional situations arise in modern data sets in
technometric and econometric applications. Moreover, exponential
familes have excellent approximation properties and are useful for
approximation of densities that are not necessarily of the
exponential form; see Stone et al. \cite{Stone}.

We base our discussion on the asymptotic analysis of Ghosal
\cite{G2000}.  In order to simplify the exposition, we invoke the more
canonical conditions similar to those given in Portnoy
\cite{Portnoy}.  Moreover, we assume that these conditions, numbered as E.1 to E.4,
hold uniformly in the sample size $n$.

\begin{itemize}
\item[\textbf{E.1}] Let $X_1,\ldots,X_n$ be iid observations from
a $d$-dimensional canonical exponential family with density

$$
h(x;\theta) = \exp\( x'\theta - \psi(\theta)\),
$$
where $\theta \in \Theta$ is an open subset of $\RR^d$, and $d \to
\infty$ as $n \to \infty$.  Fix a sequence of parameter points
$\theta_0\in \Theta$.  Set $\mu = \psi'(\theta_0)$ and $J=
\psi''(\theta_0)$, the mean and covariance of the observations,
respectively.  Following Portnoy \cite{Portnoy}, we implicitly
re-parameterize the problem, so that the Fisher information matrix
$J=I$.
\end{itemize}

For a given prior $\pi$ on $\Theta$, the posterior density of
$\theta$ over $\Theta$ conditioned on the data takes the form
$$
\pi_n(\theta) \propto \pi(\theta) \cdot \prod_{i=1}^n
h(X_i;\theta) = \pi(\theta) \cdot \exp\( n \bar X'\theta -
n\psi(\theta)\),
$$where $\bar{X} = \sum_{i=1}^n X_i/n$ is the empirical mean of the data.

We associate every point $\theta$ in the parameter
space $\Theta$ with a local parameter $\lambda \in \Lambda= \sqrt{n}(\Theta - \theta) -s$, where $$
\lambda = \sqrt{n}(\theta - \theta_0) - s,
$$
and $s =\sqrt{n}(\bar x - \mu)$ is a first order approximation
to the normalized maximum likelihood/extremum estimate. By design,
we have that $E [s]= 0$ and $E \left[s s'\right] = I_d$. Moreover,
by Chebyshev's inequality, the norm of $s$ can be bounded in
probability, $\|s\| = O_p(\sqrt{d})$. Finally, the posterior
density of $\lambda$ over $\Lambda = \sqrt{n}(\Theta - \theta_0)
-s$ is given by $ f(\lambda) = \frac{\ell(\lambda)}{\int_{\Lambda}
\ell(\lambda) d \lambda}, $ where {\small
 \begin{align}\begin{split}
 \ell(\lambda) & = \exp\( \bar X'\sqrt{n}\lambda - n
 \psi\left(\theta_0 + \frac{\lambda +s}{\sqrt{n}}\right) +n
\psi\left(\theta_0 + \frac{s}{\sqrt{n}}\right) \) \\
& \times \pi\left(\theta_0 +
\frac{\lambda+s}{\sqrt{n}}\right)/\pi
\left (\theta_0 +
\frac{s}{\sqrt{n}} \right).
\end{split}\end{align}}
We impose the following regularity conditions, following Ghosal
\cite{G2000} and Portnoy \cite{Portnoy}:
\begin{itemize}
\item[\textbf{E.2}] Consider the following quantities associated
with higher moments in a neighborhood of the true parameter
$\theta_0$, uniformly in $n$ : \bs B_{1n}(c):= \sup_{\theta,\eta}\{ E_{\theta}|\eta'(x_i -
\mu)|^3: \eta \in S^d,  \|
\theta - \theta_0\|^2 \leq cd/n\}, \\
B_{2n}(c):= \sup_{\theta,\eta}\{ E_{\theta}|\eta'(x_i - \mu)|^4: \eta \in
S^d, \| \theta - \theta_0\|^2 \leq cd/n\}.
\end{split}\end{align}
where $S^d= \{\eta \in \RR^d: \|\eta\|=1\}$. There are $p>0$ and $c_0>0$ such that
$
B_{1n}(c)< c_0 + c^p \text{ and } B_{2n}(c) < c_0 + c^p
$
for all $c>0$ and all $n$.

\item[\textbf{E.3}] The prior density $\pi$ is proper and
satisfies a positivity requirement at the true parameter
$$
 \sup_{\theta \in
\Theta} \ln \[\pi(\theta)/\pi(\theta_0)\] = O(d)
$$ where $\theta_0$ is the true parameter.
Moreover, the prior $\pi$ also satisfies the following local
Lipschitz condition
$$
| \ln \pi(\theta) - \ln \pi(\theta_0) | \leq V(c) \sqrt{d} \|
\theta - \theta_0\|
$$
for all $\theta$ such that $\|\theta - \theta_0\|^2 \leq cd
 /n$, and some $V(c)$ such that $V(c) < c_0+c^p$, with the latter holding for all $c>0$. \\

\item[\textbf{E.4}] The parameter dimension $d$ grows at the rate such that $d^3 / n
\to 0.$

\end{itemize}

Condition E.2 strengthens an analogous condition of Ghosal
\cite{G2000}, and implies an analogous
assumption by Portnoy \cite{Portnoy}. Condition E.3 is
similar to the condition on the prior in Ghosal \cite{G2000}. For
further discussion of this condition, see \cite{BC}. Condition
E.4 states that the parameter dimension should not grow too
quickly relative to the sample size.

\begin{theorem}\label{Cond:Exp} Conditions E.1-E.4 imply conditions
C.1-C.3 with $\|K\| = C \sqrt{d}$ for some $C>1$.
\end{theorem}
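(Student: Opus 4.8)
The plan is to verify C.1--C.3 by direct analysis of the localized likelihood $\ell(\lambda)$ displayed above, using the reparametrization of E.1 so that $J=I$ --- hence $\lambda'J\lambda=\|\lambda\|^{2}$, $\lambda_{\min}=\lambda_{\max}=1$, and $\|K\|_{J}=\|K\|=C\sqrt{d}$ --- and working on the event $\{\|s\|^{2}\le c_{1}d\}$, which by Chebyshev's inequality (recall $E\|s\|^{2}=d$) has probability $1-o(1)$; on that event the error bounds obtained below will be deterministic and $o(1)$, which is all that C.3 needs. The core computation is a second-order Taylor expansion of the convex map $\vartheta\mapsto n\psi(\vartheta)$ about $\theta_{0}$, evaluated at $\vartheta=\theta_{0}+(\lambda+s)/\sqrt{n}$ and at $\vartheta=\theta_{0}+s/\sqrt{n}$. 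Using $\bar X=\mu+s/\sqrt{n}$, $\psi'(\theta_{0})=\mu$, and $\psi''(\theta_{0})=I$, the linear terms $\sqrt{n}\,\mu'\lambda$ and the cross term $\lambda's$ cancel against the corresponding pieces of $\sqrt{n}\,\bar X'\lambda$ and of $\tfrac12\|\lambda+s\|^{2}-\tfrac12\|s\|^{2}$, leaving
$$\ln\ell(\lambda)=-\tfrac12\lambda'J\lambda-R_{n}(\lambda)+\Delta_{\pi}(\lambda),\qquad J=I,$$
where $R_{n}(\lambda)$ is the difference of the two third-order Taylor remainders of $n\psi$ and $\Delta_{\pi}(\lambda)=\ln\pi(\theta_{0}+(\lambda+s)/\sqrt{n})-\ln\pi(\theta_{0}+s/\sqrt{n})$. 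This already pins down $J$.

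It then remains to bound $|R_{n}(\lambda)|+|\Delta_{\pi}(\lambda)|$ over $K=B(0,C\sqrt{d})$ by $\epsilon_{1}+\epsilon_{2}\lambda'J\lambda/2$. On the good event every relevant $\vartheta$ satisfies $\|\vartheta-\theta_{0}\|^{2}\le\tilde c\,d/n$ for a fixed constant $\tilde c=\tilde c(C,c_{1})$; since the third-derivative tensor of $\psi$ is the third-cumulant tensor, $|\psi'''(\vartheta)[\eta,\eta,\eta]|\le B_{1n}(\tilde c)+o(1)=O_{p}(1)$ uniformly in unit $\eta$ over this neighborhood (the shift of the centering being of lower order), whence $|R_{n}(\lambda)|\lesssim n\bigl(\|\lambda+s\|^{3}+\|s\|^{3}\bigr)n^{-3/2}\lesssim \|K\|\,\|\lambda\|^{2}n^{-1/2}+\|s\|^{3}n^{-1/2}$. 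The local Lipschitz bound in E.3 gives $|\Delta_{\pi}(\lambda)|\le V(\tilde c)\sqrt{d/n}\,(\|\lambda\|+2\|s\|)\le V(\tilde c)\sqrt{d/n}\,(C\sqrt{d}+2\|s\|)$. Reading off coefficients --- and using $B_{1n}(\tilde c),V(\tilde c)=O_{p}(1)$ from E.2--E.3 together with $\|s\|=O_{p}(\sqrt{d})$ --- C.2 holds with $\epsilon_{2}=O_{p}(\|K\|/\sqrt{n})=O_{p}(\sqrt{d/n})$ and $\epsilon_{1}=O_{p}(\|s\|^{3}/\sqrt{n})+O_{p}(d/\sqrt{n})=O_{p}(\sqrt{d^{3}/n})$. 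Then C.3 is immediate from E.4 ($d^{3}/n\to0$): $\epsilon_{1}=o_{p}(1)$ and $\epsilon_{2}\|K\|_{J}^{2}=O_{p}(\sqrt{d/n}\cdot d)=O_{p}(\sqrt{d^{3}/n})=o_{p}(1)$.

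It remains to check C.1 with $\|K\|=C\sqrt{d}$. The bound $\int_{K}\phi\,d\lambda\ge1-o(1)$ is exactly the concentration-of-measure computation in Comment~\ref{comment: on conditions}, valid for any fixed $C>1$ since $\lambda_{\min}=1$. For the posterior mass $\int_{K}f\,d\lambda\ge1-o_{p}(1)$ the plan is: (i) rerun the expansion above on a slightly larger ball $K'=B(0,C'\sqrt{d})$ with $C'>C$, obtaining $\ell(\lambda)\le\exp\{-\tfrac12(1-\epsilon_{2})\|\lambda\|^{2}+\epsilon_{1}\}$ there, so that $\int_{K}\ell\gtrsim(2\pi)^{d/2}(1-o_{p}(1))$ while $\int_{K'\setminus K}\ell\le\mathrm{vol}(K')\exp\{-\tfrac12(1-\epsilon_{2})C^{2}d+\epsilon_{1}\}=O\bigl((2\pi e(C')^{2}e^{-C^{2}+2a})^{d/2}\bigr)$, where $a$ is the constant in the prior bound $\sup_{\theta}\ln[\pi(\theta)/\pi(\theta_{0})]=O(d)$ of E.3; and (ii) for $\|\lambda\|>C'\sqrt{d}$, use that $\sqrt{n}\,\bar X'\vartheta-n\psi(\vartheta)$ is concave in $\lambda$ (convexity of $\psi$), with maximizer within $o_{p}(\sqrt{d})$ of the origin, while E.3 bounds the prior contribution by $O(d)$ in the exponent, so that $\ln\ell(\lambda)$ is dominated by a concave function that is already strongly negative on $\partial K'$; this yields a tail bound $\ell(\lambda)\lesssim e^{O(d)}e^{-c'\sqrt{d}\,\|\lambda\|}$ whose integral over $\{\|\lambda\|>C'\sqrt{d}\}\cap\Lambda$ is negligible once $C'$ is large. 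Both (i) and (ii) force $C$ (and $C'$) to be taken sufficiently large relative to $a$, which is possible while keeping $C>1$; for large $n$ the ball $B(0,C\sqrt{d})$ lies in $\Lambda$ by the openness of $\Theta$ in E.1. The main obstacle is precisely this posterior-concentration step --- controlling $\ell(\lambda)$ where the quadratic approximation no longer applies and fixing the radius $C$ so that the leftover mass is negligible; the verification of C.2 and C.3, by contrast, is just the above Taylor estimate organized so that E.2 and E.3 supply the needed $O_{p}(1)$ constants.
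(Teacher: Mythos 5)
Your proof takes a genuinely different route from the paper. The paper's own proof is a one-paragraph reduction to Ghosal \cite{G2000}: it sets $\|K\|^2 = cd$, invokes ``an argument given in proof of Ghosal's Lemma 4'' for C.1 and ``an argument similar to that given in the proof of Ghosal's Lemma 1'' for C.2, records the resulting orders
$\epsilon_1 = O\!\left(\lambda_n(c)\|s\|^2\right) = O_p(d^{3/2}/n^{1/2})$ and
$\epsilon_2 = O\!\left(\lambda_n(c)\right) = O_p(d^{1/2}/n^{1/2})$
with $\lambda_n(c) = \sqrt{cd/n}\,B_{1n}(0) + (cd/n)B_{2n}(c)$, and then observes (in the comment following the proof) that Ghosal's support set $B(0, C\sqrt{d}\log d)$ can be shrunk to $B(0,C\sqrt{d})$ by Gaussian concentration, deferring the details to \cite{BC}. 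You instead carry out the Taylor expansion from scratch, which is more self-contained and arrives at exactly the same orders of $\epsilon_1$ and $\epsilon_2$, after which the use of E.4 to close C.3 is identical.

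Two things in your version deserve tightening. First, your bound on $R_n(\lambda)$ invokes only $B_{1n}$ and waves away ``the shift of the centering,'' but $\psi'''(\vartheta)[\eta,\eta,\eta] = E_\vartheta\big[(\eta'(X - \psi'(\vartheta)))^3\big]$, and both the recentering from $\psi'(\vartheta)$ to $\mu$ and the control of the variation of $\psi'''$ over the shell $\|\vartheta - \theta_0\|^2 \le \tilde c\, d/n$ genuinely require the fourth-moment bound $B_{2n}$. This is precisely why $B_{2n}$ appears in the paper's $\lambda_n(c)$; an argument using only $B_{1n}$ is incomplete. Second, in step (i) of your C.1 sketch, the prior contribution on the annulus $K'\setminus K$ should be controlled by the \emph{local Lipschitz} part of E.3, which contributes $o(1)$ to the exponent, not by the global $\sup_\theta \ln[\pi(\theta)/\pi(\theta_0)] = O(d)$ bound; the $e^{2a}$ factor you inserted is spurious, though harmless since it only loosens the inequality in your favor. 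Your overall C.1 plan --- annulus handled by the same quadratic expansion, tail handled via log-concavity of the exponential-family factor together with the global prior bound, with $C$ chosen large enough --- is the standard posterior-concentration argument and matches in spirit what the paper delegates to Ghosal's Lemma 4 and to \cite{BC}, so at that point you are at the same level of detail as the paper itself.
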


\begin{remark}
Combining Theorems \ref{Thm:CLT} and
\ref{Cond:Exp}, we have the asymptotic normality
of the posterior,
$$
\int_{\Lambda} |f(\lambda) - \phi(\lambda)| d \lambda = o_p(1).
$$
Furthermore, we can apply Theorem \ref{Thm:Main}
to the posterior density $f$ to bound the
convergence time (number of steps) of the
Metropolis walk needed to obtain a draw from $f$
(with a fixed level of accuracy): The convergence
time is at most $$ O_p(d^2) $$ after the burn-in
period; together with the burn-in, the
convergence time is
$$O_p(d^3\ln d).$$ Finally, the integration bounds stated in the
previous section also apply to the posterior $f$.
\end{remark}

\subsubsection{Curved Exponential Family}
Next we consider the case of a $d$-dimen\-sio\-nal curved
exponential family. The curved family is general enough to allow for non-concavities and
even non-smoothness in the log-likelihood
function, which the canonical exponential family did not allow for.  We assume that the following conditions, numbered as NE.1 to NE.4,
hold uniformly in the sample size $n$,  in addition to the previous conditions E.1 to E.4.

\begin{itemize}

\item[\textbf{NE.1}] Let $X_1,\ldots,X_n$ be iid observations
from a $d$-dimensional curved exponential family with density
$$
h(x;\theta) = \exp\( x'\theta(\eta) - \psi(\theta(\eta))\).
$$
The parameter of interest is $\eta$, whose
true value $\eta_0$ lies in the interior of a convex compact set
$\Psi \subset \RR^{d_1}$.  The true value of $\theta$, induced by
$\eta_0$ is given by $\theta_0 = \theta(\eta_0)$. The mapping
$\eta \mapsto \theta(\eta)$ takes values from $\RR^{d_1}$ to
$\RR^d$ where $ c \cdot d \leq d_1 \leq d$, for some $c>0$. Finally, $d \to
\infty$ as $n \to \infty$.

\item[\textbf{NE.2}] True value $\eta_0$ is the unique solution to the
system $\theta(\eta) = \theta_0$, and we have that $\|\theta(\eta) -
\theta(\eta_0)\| \geq \epsilon_0 \|\eta - \eta_0\|$ for some
$\epsilon_0>0$ and all $\eta \in \Psi$.
\end{itemize}

Thus, the parameter $\theta$ corresponds to a high-dimensional
linear parametrization of the log-density, and $\eta$ describes
the lower-dimensional parametrization of the log-density.
There are many classical examples of curved exponential families;
see for example Efron \cite{efron}, Lehmann and Casella
\cite{lehmann}, and Bandorff-Nielsen \cite{bandorff}. An example
of the condition that puts a curved structure onto an exponential
family is a moment restriction of the type:
$$
\int m(x, \alpha) h(x,\theta) d x =0.
$$
This condition restricts $\theta$ to lie on a curve that can be
parameterized as $\{\theta(\eta), \eta \in \Psi\}$, where the
parameter $\eta=(\alpha, \beta)$ contains the component $\alpha$ as well as
other components $\beta$. In econometric applications,
moment restrictions often represent Euler equations that result from the
data $x$ being an outcome of an optimization by rational
decision-makers; see e.g. Hansen and Singleton
\cite{hansen:singleton}, Chamberlain \cite{chamberlain}, Imbens
\cite{imbens}, and Donald, Imbens and Newey
\cite{Donald-Imbens-Newey}. Thus, the curved exponential framework
is a fundamental complement of the exponential framework, at least
in certain fields of data analysis.

We require the following additional regularity conditions on the
mapping $\theta(\cdot)$:
\begin{itemize}
\item[\textbf{NE.3}] For every $\kappa$, and uniformly in $\gamma
\in B(0,\kappa\sqrt{d})$, there exists a linear operator
$G:\RR^{d_{1}}\to \RR^d$ such that $G'G$ has eigenvalues bounded
from above and away from zero, uniformly in $n$, and for every $n$
$$
\sqrt{n}\left( \theta(\eta_0 + \gamma/\sqrt{n})-\theta(\eta_0)
\right) = r_{1n} + (I_d+R_{2n})G\gamma,
$$ where $\|r_{1n}\|\leq \delta_{1n}$ and $\|R_{2n}\|\leq
\delta_{2n}$ and
$
\delta_{1n}\sqrt{d}  \to 0 \ \ \mbox{and}\ \ \delta_{2n} d \to 0.
$
\end{itemize}

Thus the mapping $\eta \mapsto \theta(\eta)$ is allowed to be
nonlinear and discontinuous. For example, the additional condition
of $\delta_{1n}=0$ implies the continuity of the mapping in a
neighborhood of $\eta_0$. More generally, condition NE.3 does
impose that the map admits an approximate linearization in the
neighborhood of $\eta_0$, whose quality is controlled by the errors
$\delta_{1n}$ and $\delta_{2n}$. An example of a kind of map
allowed in this framework is given in Figure \ref{Fig:2}.

\begin{figure}
\begin{center}
\includegraphics[width=0.6\textwidth]{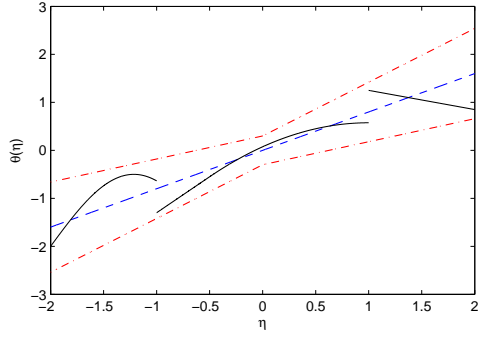}
\end{center}
\caption{This figure illustrates the mapping $\theta(\cdot)$. The
(discontinuous) solid line is the mapping while the dash line
represents the linear map induced by $G$. The dash-dot line
represents the deviation band controlled by $r_{1n}$ and
$R_{2n}$.}\label{Fig:2}
\end{figure}

Given a prior $\pi$ on $\Theta$, the posterior of $\eta$
given the data is denoted by
$$
\pi_n(\eta) \propto \pi(\theta(\eta)) \cdot \prod_{i=1}^n
h(X_i;\eta) = \pi(\theta(\eta)) \cdot \exp\( n \bar X'\theta(\eta)
- n\psi(\theta(\eta))\).
$$
In this framework, we also define the local parameters to describe
contiguous deviations from the true parameter as
$$
\gamma = \sqrt{n}(\eta - \eta_0) - s,  \ \ s =
(G'G)^{-1}G'\sqrt{n}(\bar x - \mu),
$$
where $s$ is a first order approximation to the normalized maximum
likelihood/extremum estimate. Further, we have that
$E [s]= 0$, $E [s s'] = (G'G)^{-1}$, and $\|s\| = O_p(\sqrt{d})$.
The posterior density of $\gamma$ over $\Gamma$, where $\Gamma =
\sqrt{n}(\Psi-\eta_0)-s$, is $ f(\gamma) =
\frac{\ell(\gamma)}{\int_{\Gamma} \ell(\gamma) d \gamma}, $ where
\bsnumber \ell(\gamma) & = \exp\left( n \bar X' \left (\theta\left (\eta_0 +
\frac{\gamma+s}{\sqrt{n}}\right ) - \theta\left (\eta_0 +
\frac{s}{\sqrt{n}}\right)\right )\right ) \\
& \times \exp\left( -n\psi\left (\theta\left (\eta_0 +
\frac{\gamma+s}{\sqrt{n}}\right )
\right) + n\psi\left(\theta\left (\eta_0 +
\frac{s}{\sqrt{n}}\right) \right) \right ) \\
&\times \pi\left ( \theta\left (\eta_0 +
\frac{\gamma+s}{\sqrt{n}}\right )\right)/\pi\left (\theta\left (\eta_0 +
\frac{s}{\sqrt{n}}\right) \right).
\end{split}\end{align}
The condition on the prior is the following:
\begin{itemize}
\item[] \textbf{NE.4} The prior $\pi(\eta) \propto
\pi(\theta(\eta))$, where $\pi(\theta)$ satisfies condition E.3.\\
\end{itemize}
\begin{theorem}\label{Cond:CExp} Conditions E.1-E.4 and NE.1-NE.4  imply conditions C.1-C.3 with $\|K\| = C \sqrt{d/\lambda_{min}}$ for some $C>1$, where $\lambda_{\min}$ is the minimal eigenvalue of $J=G'G$.
\end{theorem}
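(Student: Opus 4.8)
The plan is to reduce the curved problem to the exponential-family case already settled in Theorem~\ref{Cond:Exp} via the approximate linearization NE.3, and then to control the tails of $f$ using the global identification NE.2. \emph{Step 1 (an exact reduction).} Let $s_\theta := \sqrt{n}(\bar x - \mu)$ be the natural centering of the underlying canonical exponential family, so that $s = (G'G)^{-1}G' s_\theta$. For $\gamma \in \Gamma$ put
$$
\lambda(\gamma) := \sqrt{n}\left(\theta(\eta_0 + (\gamma+s)/\sqrt{n}) - \theta_0\right) - s_\theta ,
$$
equivalently $\theta(\eta_0 + (\gamma+s)/\sqrt{n}) = \theta_0 + (\lambda(\gamma)+s_\theta)/\sqrt{n}$. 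Substituting this into the displayed expression for $\ell(\gamma)$ and using NE.4, every term depending only on $\theta_0$ or on $\theta_0 + s_\theta/\sqrt{n}$ cancels, yielding the \emph{exact} identity $\ell(\gamma) = \ell_{\exp}(\lambda(\gamma)) \big/ \ell_{\exp}(\lambda(0))$, where $\ell_{\exp}$ is the localized likelihood of the canonical exponential family of E.1 (the ``$\ell$'' of Theorem~\ref{Cond:Exp}). By E.1 its information matrix is the identity, so Theorem~\ref{Cond:Exp} supplies a set $K_{\exp} = B(0, C_{\exp}\sqrt{d})$ and errors $\epsilon_1^{\exp}, \epsilon_2^{\exp}$ satisfying C.3 with $|\ln \ell_{\exp}(\lambda) + \frac{1}{2}\|\lambda\|^2| \le \epsilon_1^{\exp} + \epsilon_2^{\exp}\|\lambda\|^2/2$ on $K_{\exp}$.

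\emph{Step 2 (C.2 and C.3).} Fix any $\kappa > 0$ and work on $\{\gamma : \|\gamma\| \le \kappa\sqrt{d}\}$, which contains $K = B(0, C\sqrt{d/\lambda_{\min}})$. Applying NE.3 with argument $\gamma + s$ (legitimate since $\|\gamma+s\| = O_p(\sqrt{d})$) gives
$$
\lambda(\gamma) = G\gamma - (I - P_G)s_\theta + \rho(\gamma), \qquad P_G := G(G'G)^{-1}G',
$$
with $\|\rho(\gamma)\| \le \delta_{1n} + \delta_{2n}\|G(\gamma+s)\| = o_p(1/\sqrt{d})$ uniformly, using $\delta_{1n}\sqrt{d}\to 0$, $\delta_{2n}d\to 0$ and the bounded eigenvalues of $J := G'G$. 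The decisive observation is that $G'(I - P_G) = 0$, so the cross term $(G\gamma)'(I-P_G)s_\theta$ vanishes \emph{identically}; the remaining cross terms each carry a factor $\rho(\gamma)$ and are $o_p(1)$ uniformly on $K$ (using $\|s_\theta\| = O_p(\sqrt{d})$ and $\|G\gamma\|^2 = \|\gamma\|_J^2 \le \|K\|_J^2 = O(d)$). Hence $\|\lambda(\gamma)\|^2 - \|\lambda(0)\|^2 = \gamma'J\gamma + o_p(1)$ on $K$; in particular no linear-in-$\gamma$ term survives, which is what makes C.2 attainable at all. Combining this with the exact identity, with the exponential-family bound evaluated at $\lambda(\gamma)$ and $\lambda(0)$ (both of norm $O_p(\sqrt{d})$, hence in $K_{\exp}$ after enlarging $C_{\exp}$), and with $\epsilon_2^{\exp}\|\lambda(\gamma)\|^2 = \epsilon_2^{\exp}\gamma'J\gamma + \epsilon_2^{\exp}\cdot O_p(d) = \epsilon_2^{\exp}\gamma'J\gamma + o_p(1)$, we obtain
$$
\left| \ln \ell(\gamma) + \tfrac{1}{2}\gamma'J\gamma \right| \le \epsilon_1 + \epsilon_2\, \gamma'J\gamma/2 \quad \text{on } K,
$$
with $\epsilon_1 = O(\epsilon_1^{\exp}) + o_p(1) = o_p(1)$ and $\epsilon_2 = \epsilon_2^{\exp}$. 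Since $\|K\|_J^2 = \lambda_{\max}\|K\|^2 = O(d)$ and $\epsilon_2^{\exp}d = o_p(1)$ by the exponential-family C.3, condition C.3 holds; here $d$ and $d_1$ differ only by a constant factor by NE.1, absorbed into $C$.

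\emph{Step 3 (C.1).} The requirement $\int_K \phi \ge 1 - o(1)$ is the concentration-of-measure computation of Comment~\ref{comment: on conditions} for $N(0, J^{-1})$ with $\|K\| = C\sqrt{d/\lambda_{\min}}$, $C > 1$. For $\int_K f \ge 1 - o_p(1)$, split $\Gamma \setminus K$ into the annulus $A_1 = \{ C\sqrt{d/\lambda_{\min}} < \|\gamma\| \le \kappa\sqrt{d}\}$ and the far set $A_2 = \{\|\gamma\| > \kappa\sqrt{d}\}$. On $A_1$ the Step~2 estimate still applies, giving $\ell(\gamma) \le e^{o_p(1)} e^{-(1 - o_p(1))\lambda_{\min}\|\gamma\|^2/2}$, whose integral is a Gaussian tail beyond radius $C\sqrt{d/\lambda_{\min}} \ge C\sqrt{d_1/\lambda_{\min}}$ and is therefore at most $e^{-c(C)d_1}(2\pi)^{d_1/2}\det(J)^{-1/2}$ with $c(C) \to \infty$ as $C \to \infty$. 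On $A_2$, NE.2 gives $\|\lambda(\gamma) + s_\theta\| \ge \epsilon_0\|\gamma+s\|$, hence $\|\lambda(\gamma)\| \ge \epsilon_0\|\gamma\|/4 \ge \epsilon_0\kappa\sqrt{d}/4$ once $\kappa$ exceeds the $O_p(1)$ factor in $\|s\| = O_p(\sqrt{d})$; together with the (sub-)Gaussian decay of $\ell_{\exp}$ away from its mode, which follows from the concavity of $\theta \mapsto n\bar X'\theta - n\psi(\theta)$ and the prior bound E.3 already exploited in Theorem~\ref{Cond:Exp}, this makes $\int_{A_2} \ell_{\exp}(\lambda(\gamma))\, d\gamma$ at most $e^{O_p(d)}e^{-c'\kappa^2 d}(2\pi)^{d_1/2}\det(J)^{-1/2}$. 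Dividing both tails by the normalizing constant, which by Step~2 and Gaussian concentration satisfies $\int_\Gamma \ell(\gamma)\, d\gamma \ge e^{-o_p(1)}(2\pi)^{d_1/2}\det(J)^{-1/2}(1-o(1))$ — equivalently $\int_\Gamma \ell_{\exp}(\lambda(\gamma))\, d\gamma \ge e^{-O_p(d)}(2\pi)^{d_1/2}\det(J)^{-1/2}$ since $\ell_{\exp}(\lambda(0)) = e^{-O_p(d)}$ — the $\det(J)$ factors cancel, and because $\lambda_{\max}/\lambda_{\min}$ is bounded, choosing $C$ and $\kappa$ large makes both ratios $o_p(1)$. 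Hence $\int_{\Gamma \setminus K} f = o_p(1)$ and C.1 follows.

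\emph{Where the difficulty lies.} Steps~1--2 are essentially bookkeeping once one notices the orthogonality $G'(I - P_G) = 0$: this is what kills the linear-in-$\gamma$ term that would otherwise be incompatible with C.2, and it is the conceptual heart of the reduction. The genuine work is Step~3, because the curved parameter space $\Gamma$ extends far beyond the radius $\kappa\sqrt{d}$ on which NE.3's linearization is available, so the local quadratic control must be patched together with the global identification NE.2 and the exponential-family tail estimates, and the comparison closes only because the eigenvalue ratio $\lambda_{\max}/\lambda_{\min}$ of $J = G'G$ is bounded — precisely the regime guaranteed by NE.3. Additional care is needed since several of the constants involved (the $O(d)$ prior bound in E.3, the factor $\ell_{\exp}(\lambda(0))^{-1} = e^{O_p(d)}$) are exponential in $d$, so one must check that the $e^{-\Omega(d)}$ gains from Gaussian concentration genuinely dominate them.
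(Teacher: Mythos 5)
Your proposal is correct and takes a genuinely different, and in some ways cleaner, route than the paper. The paper's proof is brief: it cites the arguments of Ghosal's Lemmas 1 and 4 applied with the linearization NE.3, writes down the resulting $\epsilon_1 = O_p\bigl(\delta_{1n}d^{1/2} + \delta_{2n}d + \lambda_n(C)(\delta_{1n}d^{1/2}+\delta_{2n}d^{1/2}+d)\bigr)$ and $\epsilon_2 = O_p(\lambda_n(C))$, and pushes the remaining details to the companion technical report \cite{BC}. Your approach instead carries out an explicit reduction: the exact factorization $\ell(\gamma) = \ell_{\exp}(\lambda(\gamma))/\ell_{\exp}(\lambda(0))$ makes the compositional structure transparent, and the orthogonality $G'(I - P_G) = 0$ is a sharp articulation of why no linear-in-$\gamma$ term survives in $\|\lambda(\gamma)\|^2 - \|\lambda(0)\|^2$ — this is implicit in the Ghosal-style expansion but nowhere stated as cleanly as you do. Your derived errors ($\epsilon_1 = o_p(1)$ absorbing the cross-term contributions $O_p(\delta_{1n}\sqrt d + \delta_{2n}d)$, $\epsilon_2 = \epsilon_2^{\exp}$) match the paper's up to notation. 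What your route buys is self-containedness and conceptual clarity; what the paper's route buys is brevity by re-using Ghosal's machinery wholesale.

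Two small remarks on Step~3. First, your $A_2$ tail estimate quotes a decay $e^{-c'\kappa^2 d}$, but the decay you can extract from concavity of $\theta\mapsto n\bar X'\theta - n\psi(\theta)$ \emph{outside} the ball on which the quadratic approximation is certified is only linear in $\|\lambda\|$ (of the form $\ln\ell_{\exp}(\lambda)\leq -c\sqrt d\,\|\lambda\| + O(d)$ by extending a supporting chord from the boundary of $K_{\exp}$), so the honest bound on $A_2$ is $e^{O(d) - c\kappa d}$ rather than $e^{O(d)-c'\kappa^2 d}$. This does not harm the conclusion — $\kappa$ can still be chosen large enough to beat the $e^{O(d)}$ factors — but the stated exponent is optimistic. (One can recover a partial quadratic regime by re-running the exponential-family approximation out to radius $\kappa''\sqrt d$ for $\kappa''$ fixed, which E.2 and E.4 permit, but eventually one falls back to the linear regime; your argument goes through either way.) Second, the paper cites NE.3 rather than NE.2 for C.1; you use NE.2 for the far set $A_2$, which is natural and in fact makes the role of the global identification condition visible. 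Both are listed hypotheses, so neither choice is a gap.

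Finally, a bookkeeping point: the paper works with $\|K\|^2 = Cd_1$ while you and the theorem statement work with $\|K\| = C\sqrt{d/\lambda_{\min}}$. Since NE.1 bounds $d_1 \asymp d$ and NE.3 bounds the eigenvalues of $J = G'G$ away from zero and infinity, the two radii differ only by constants absorbed into $C$, so the discrepancy is harmless.
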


\begin{remark} Theorems \ref{Thm:CLT} and
\ref{Cond:CExp} imply the asymptotic normality of
the posterior,
$$
\int_{\Gamma} |f(\gamma) - \phi(\gamma)| d \gamma = o_p(1),
$$
where
$$
\phi(\gamma) =  \frac{1}{ (2\pi)^{d/2} \det{((G'G)^{-1})}^{1/2}}
\exp \(- \frac 12 \gamma' (G'G) \gamma\).
$$
Theorem \ref{Thm:Main} implies further that the main results of
the paper on the polynomial time sampling and integration apply to
this curved exponential family. \end{remark}

\subsection{Z-estimation} Next we turn to the $Z$-estimation problem, where our basic setup closely follows the setup
in e.g. He and Shao \cite{HS00}.  We make the following assumption that characterizes
the setting. As in the rest of the paper, the dimension of the parameter space $d$ and other quantities will depend on the sample size $n$.

\begin{itemize}
\item[\textbf{ZE.0}]   The data $X_1,...,X_n$ are i.i.d, and there exists a vector-valued moment function  $m:\mathcal{X} \times \RR^d \to \RR^{d_1}$
such that $$E[m(X,\theta)] = 0 \ \ \mbox{at the true parameter  } \theta = \theta_0 \in \Theta_n \subset B(\theta_0, T_n) \subset \RR^{d}.$$
Both the dimension of the moment function $d_1$ and the dimension
of the parameter $d$ grow with the sample size $n$, and we restrict
that $c d_1 \leq d \leq d_1$ for some constant $c$. The parameter space $\Theta_n$ is an open convex set contained in the ball $B(\theta_0, T_n)$ of radius
$T_n$, where the radius $T_n$ can grow with the sample size $n$.
 \end{itemize}
  The normalized empirical moment function takes the form

$$ S_n(\theta) = \frac{1}{\sqrt{n}}\sum_{i=1}^n m(X_i,\theta). $$
The $Z$-estimator for $\theta_0$ is defined as the minimizer of the norm $\|S_n(\theta)\|$.   However, in many applications of interests,
 the lack of continuity or smoothness of the empirical moments $S_n(\theta)$
can pose serious computational challenges to obtaining the minimizer.
As argued in the introduction, in such cases the MCMC
methodology could be particularly appealing for obtaining the quasi-posterior means and medians
as computationally tractable alternatives to the Z-estimator based on minimization.

We then make the following variance and smoothness assumptions on the moment functions in addition to the basic condition \textbf{ZE.0}:

\begin{itemize}

\item[\textbf{ZE}.1]  Let $S^{d_1}= \{\eta \in \RR^{d_1}: \|\eta \|= 1\}$ denote the unit sphere. The
variance of the moment function is bounded, namely $\sup_{\eta \in S^{d_1}}  \Ep[ (\eta' m(X,\theta_0))^2]=O(1)$.
The moment functions have the following continuity property:  $\sup_{\eta \in S^{d_1}} ( \Ep [  (\eta'(m(X,\theta) - m(X,\theta_0)))^2])^{1/2} \leq  O(1) \cdot \|\theta - \theta_0\|^{\alpha}$,
uniformly in $\theta \in \Theta_n$,  where
$\alpha \in (0,1]$ and is bounded away from zero, uniformly in $n$.  Moreover,  the family of functions $\mathcal{F} = \{
\eta' (m(X,\theta)- m(X,\theta_0)) \ : \theta \in \Theta_n \subset \RR^d, \eta \in S^{d_1}  \}$ is not very complex, namely
the uniform covering entropy of $\mathcal{F}$ is of the same order as the uniform covering entropy of a Vapnik-Chervonenkis  (VC) class
of functions with VC dimension of order $O(d)$, and $\mathcal{F}$ has an envelope $F$ a.s. bounded by $M=O(\sqrt{d})$.
\end{itemize}

The smoothness assumption covers moment function both in  the smooth case, where $\alpha=1$,
 and the non-smooth case, where $\alpha < 1$.  For example, in the classical mean regression problem, we have the smooth case $\alpha=1$
 and in the quantile regression problems  mentioned in the introduction, we have a non-smooth case, with
 $\alpha = 1/2$.  The condition on the function class $\mathcal{F}$ is standard in statistical estimation and, in particular, holds for $\mathcal{F}$ formed as VC classes
 or certain stable transformations of VC classes (see van der Vaart and Wellner \cite{VW}).
 We use the entropy in conjunction with the maximal
 inequalities similar to those developed in He and Shao \cite{HS00}.  The condition on the envelope is standard,
 but it can be replaced by an alternative condition on $\sup_{f \in \mathcal{F}} n^{-1} \sum_{i=1}^n f^4$, see e.g. He and Shao \cite{HS00},
 which can weaken the assumptions on the envelope.

Next we make the following additional smoothness  and identification assumptions uniformly in the sample size $n$.

\begin{itemize}

\item[\textbf{ZE}.2] The mapping $\theta \mapsto \Ep[ m(X, \theta) ]$ is continuously twice
differentiable with $\|\sup_{\eta \in S^{d_1}}\nabla^2_{\theta} \Ep[ m(X, \theta) ][\eta, \eta]\|$ bounded by $O(\sqrt{d})$
uniformly in $\theta$, uniformly in $n$.   The eigenvalues of  $A'A$, where $A=\nabla \Ep[ m(X,\theta_0) ]$
is the Jacobian matrix,  are bounded above and away from zero uniformly in $n$. Finally, there exist positive numbers $\mu$
and $\delta$ such that uniformly in $n$, the following identification condition holds \begin{equation}\label{Eq:ZE2} \left\|
\Ep\[ m(X,\theta)
\]\right\| \geq  \left( \sqrt{\mu} \|\theta-\theta_0\| \
\wedge \ \delta \ \right).\end{equation}
\end{itemize}
This condition requires the population moments $\Ep[ m(X,\theta)]$ to be approximately
linear in the parameter $\theta$ near the true parameter value $\theta_0$, and also insures identifiability of the true parameter value $\theta_0$.

Finally, we  impose the following restrictions on the parameter dimension $d$ and the radius of the parameter space $T_n$.

\begin{itemize}
\item[\textbf{ZE}.3] The following condition holds: (a)
$d^4\log^2 n/ n \to 0$, (b) $d^{2+\alpha} \log n/ n^{\alpha} \to 0$, and (c)
 $ d T_n^{2\alpha} \log n / n \to 0$.
\end{itemize}
These conditions are reasonable. Indeed, if we set $\alpha=1$ and use radius $T_n = O(d \log n)$ for parameter space, then we require only that $d^4/ n \to  0$, ignoring logs, which is only slightly stronger
than the condition $d^3/n \to 0$ needed in the exponential family case.   In the latter case, the information on higher order moments lead to the weaker requirement. Also, an important difference here is that we are using the flat
prior in the Z-estimation framework, and this necessitates us to restrict the radius of parameter space by $T_n$.  Note that even though the bounded radius $T_n= O(1)$ is already plausible for many applications,
we can allow for the radius to grow, for example, $T_n = O(d \log n)$ when $\alpha =1$.

In order to state the formal results concerning the quasi-posterior,  let us define the quasi-posterior
and related quantities. First, we define the criterion function as
$ Q_n(\theta) = -\|
S_n(\theta)\|^2,$
and treat it as a replacement for the log-likelihood. We will use a flat prior over the parameter space $\Theta$, so that the quasi-posterior density of
$\theta$ over $\Theta$ takes the form
$$
\pi_n(\theta)  = \frac{\exp\{Q_n(\theta)\}}{\int_\Theta \exp\{ Q_n(\theta') \} d \theta'}.
$$
We associate every point $\theta$ in the parameter
space $\Theta$ with a local parameter $\lambda \in \Lambda = \sqrt{n} (\Theta - \theta_0)-s$, where $ \lambda = \sqrt{n}(\theta - \theta_0) - s,$ and $s = - (A'A)^{-1} A' S_n(\theta_0)$  is a first order approximation
to extremum estimate. We have that $E[m(X,\theta_0) m(X,\theta_0)']$ is bounded
in the spectral norm, and $(A'A)^{-1} A'$ has a bounded norm, so that the norm of $s$ can be bounded in
probability, $\|s\| = O_p(\sqrt{d})$, by the Chebyshev inequality. Finally, the quasi-posterior
density of $\lambda$ over $\Lambda = \sqrt{n}(\Theta - \theta_0)
-s$ is given by $$ f(\lambda) = \ell(\lambda)/ \int_{\Lambda}
\ell(\lambda') d \lambda', $$ where
$$ \ell(\lambda) =  \exp( Q_n(\theta_0+ (\lambda
+s)/\sqrt{n}) - Q_n(\theta_0 + s/\sqrt{n} ) ) .
$$

\begin{theorem}\label{Theorem:Zest} Conditions ZE.0-ZE.3 imply conditions C.1-C.3 with $\|K\| = C \sqrt{d/\lambda_{min}}$ for $C>1$, where $\lambda_{\min}$ is the minimal eigenvalue of $J=2 A'A$.
\end{theorem}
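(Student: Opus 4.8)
The plan is to verify conditions C.1--C.3 directly by isolating, from the localized criterion $\ln\ell(\lambda)=Q_n(\theta_0+(\lambda+s)/\sqrt n)-Q_n(\theta_0+s/\sqrt n)$ with $Q_n=-\|S_n\|^2$, a quadratic piece equal to $-\tfrac12\lambda'J\lambda$ for $J=2A'A$, and bounding everything else by a remainder whose size is governed by ZE.1--ZE.3. \emph{Linearization.} Write $M(\theta)=\Ep[m(X,\theta)]$ and $\nu_n(\theta)=S_n(\theta)-\sqrt n\,M(\theta)=n^{-1/2}\sum_{i=1}^n\big(m(X_i,\theta)-\Ep[m(X,\theta)]\big)$, so $S_n(\theta_0)=\nu_n(\theta_0)$ because $M(\theta_0)=0$. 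For $\theta=\theta_0+(\lambda+s)/\sqrt n$, a second--order Taylor expansion of $M$ (ZE.2) gives $\sqrt n\,M(\theta)=A(\lambda+s)+\rho_1(\lambda)$ with $\|\rho_1(\lambda)\|=O(\sqrt d)\,\|\lambda+s\|^2/\sqrt n$, while a maximal inequality of He--Shao type applied to the class $\mathcal F$ of ZE.1 (VC--type entropy of order $d$, $L^2$--modulus $\lesssim\|\theta-\theta_0\|^{\alpha}$, envelope $O(\sqrt d)$) yields $\rho_2(\lambda):=\nu_n(\theta)-\nu_n(\theta_0)$ with $\sup_{\lambda\in K}\|\rho_2(\lambda)\|=O_p\big(\sqrt d\,\delta_n^{\alpha}\sqrt{\log n}+d^{3/2}\log n/\sqrt n\big)$, where on $K$ one has $\|\lambda\|\le\|K\|=O(\sqrt d)$ and $\|s\|=O_p(\sqrt d)$, hence $\delta_n:=\sup_{\lambda\in K}\|\theta-\theta_0\|=O_p(\sqrt d/\sqrt n)$. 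Since $s=-(A'A)^{-1}A'\nu_n(\theta_0)$, premultiplying by $A$ gives $As=-P\nu_n(\theta_0)$ with $P:=A(A'A)^{-1}A'$ the orthogonal projector onto $\mathrm{col}(A)$, and therefore the key identity $S_n(\theta)=A\lambda+(I-P)\nu_n(\theta_0)+r(\lambda)$ with $r:=\rho_1+\rho_2$.

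\emph{Verifying C.2 and C.3.} Expanding $-\|S_n\|^2$ and using $A\lambda\perp(I-P)\nu_n(\theta_0)$ (so $\|A\lambda+(I-P)\nu_n(\theta_0)\|^2=\tfrac12\lambda'J\lambda+\|(I-P)\nu_n(\theta_0)\|^2$), one obtains
\[ \ln\ell(\lambda)+\tfrac12\lambda'J\lambda=-2\langle A\lambda,r(\lambda)\rangle-2\langle (I-P)\nu_n(\theta_0),\,r(\lambda)-r(0)\rangle-\|r(\lambda)\|^2+\|r(0)\|^2 . \]
I would bound the right side on $K$ by $\epsilon_1+\epsilon_2\,\lambda'J\lambda/2$ as follows. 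Set $a_n:=\sqrt d\,\sup_{\lambda\in K}\|r(\lambda)\|$; ZE.3(a),(b) force $a_n=o_p(1)$, because $\sqrt d\,\|\rho_1\|=O_p(d^2/\sqrt n)$ and $(\sqrt d\,\|\rho_2\|)^2=O_p\big(d^{2+\alpha}\log n/n^{\alpha}+d^4\log^2 n/n\big)$. For the first term, Young's inequality gives $2|\langle A\lambda,r\rangle|\le\tfrac{\eta_n}{2}\lambda'J\lambda+\|r\|^2/\eta_n$, and the choice $\eta_n=a_n/d$ makes $\epsilon_2:=\eta_n$ satisfy $\epsilon_2\|K\|_J^2=O(\eta_n d)=O(a_n)=o_p(1)$ while $\sup_K\|r\|^2/\eta_n=a_n=o_p(1)$. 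For the second term, $|\langle (I-P)\nu_n(\theta_0),r(\lambda)-r(0)\rangle|\le\|\nu_n(\theta_0)\|\cdot 2\sup_K\|r\|=O_p(\sqrt d)\cdot O_p(a_n/\sqrt d)=o_p(1)$, using $\Ep\|\nu_n(\theta_0)\|^2=\mathrm{tr}\,\mathrm{Var}(m(X,\theta_0))=O(d_1)=O(d)$ by ZE.1 and $d\asymp d_1$. The last two terms are $o_p(1)$. So C.2 holds with $\epsilon_1=o_p(1)$ and, together with $\epsilon_2\|K\|_J^2=o_p(1)$, also C.3; and $J=2A'A$ is symmetric positive definite with spectrum bounded away from $0$ and $\infty$ by ZE.2.

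\emph{Verifying C.1.} That $\Omega:=\mathrm{Var}(s)=(A'A)^{-1}A'\,\Ep[m(X,\theta_0)m(X,\theta_0)']\,A(A'A)^{-1}$ has bounded eigenvalues follows from ZE.1--ZE.2; that $\int_K\phi\ge1-o(1)$ for $\|K\|=C\sqrt{d/\lambda_{\min}}$ with $C>1$ is precisely the concentration--of--measure argument in the Remark following C.3. For $\int_K f\ge1-o_p(1)$ I would bound the quasi--posterior mass of $\Lambda\setminus K$. By the identification inequality in ZE.2, $\sqrt n\,\|M(\theta)\|\ge(\sqrt\mu\,\|\lambda+s\|)\wedge(\sqrt n\,\delta)$, and a maximal inequality over all of $\Theta_n\subset B(\theta_0,T_n)$ gives $\sup_{\theta\in\Theta_n}\|\nu_n(\theta)\|=o_p(\sqrt n)$ --- this is exactly where ZE.3(c), $dT_n^{2\alpha}\log n/n\to0$, enters --- so $\|S_n(\theta)\|\gtrsim(\sqrt\mu\,\|\lambda\|)\wedge(\sqrt n\,\delta)$ on $\Lambda\setminus K$ once $C$ is large. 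Hence $\ln\ell(\lambda)\le-c\,(\|\lambda\|^2\wedge n)$ there, so $\int_{\Lambda\setminus K}\ell$ is exponentially small, while the C.2 bound and $\det J\le\lambda_{\max}^d$ give $\int_K\ell\ge c_1^{\,d}$ for a constant $c_1>0$; the ratio $\int_{\Lambda\setminus K}\ell/\int_K\ell$ is then $o_p(1)$ once $C$ is large enough to beat $c_1$ and ZE.3 is used to absorb the volume factor $(\sqrt n\,T_n)^d$. This gives C.1 and completes the deduction.

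\emph{Main obstacle.} The crux is the uniform control of the empirical--process remainder $\rho_2$, both on the local scale $\|\theta-\theta_0\|\asymp\sqrt d/\sqrt n$ (for C.2--C.3) and globally on $\Theta_n$ (for C.1), via maximal inequalities calibrated to a VC--type class of dimension $\sim d$, envelope $O(\sqrt d)$, and $L^2$--modulus $\|\theta-\theta_0\|^{\alpha}$, together with the bookkeeping that converts the three rates in ZE.3 exactly into $\epsilon_1=o_p(1)$, $\epsilon_2\|K\|_J^2=o_p(1)$, and negligible tail mass. The non--smooth case $\alpha<1$ (e.g. quantile regression) is the most delicate, since the modulus degrades and one must rely on ZE.3(b),(c) rather than on a classical Bahadur expansion.
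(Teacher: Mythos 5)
Your overall route coincides with the paper's: decompose $S_n(\theta)=\sqrt n\,M(\theta)+\nu_n(\theta)$, isolate the linear piece $A\lambda$ and a residual $r=\rho_1+\rho_2$ where $\rho_1$ is the second-order Taylor remainder of $M$ and $\rho_2=W_n(\theta)$ is the centered empirical process, control $\rho_2$ uniformly on $K$ by a He--Shao type maximal inequality for the VC-type class of ZE.1, verify C.2--C.3 by expanding the quadratic, and verify C.1 by combining the identification condition of ZE.2 with a tail estimate. Your organization through the projector $P=A(A'A)^{-1}A'$ and the exact identity $S_n(\theta)=A\lambda+(I-P)\nu_n(\theta_0)+r(\lambda)$, with the orthogonality $A\lambda\perp(I-P)\nu_n(\theta_0)$, is a tidy presentation of what the paper does via $r_n=\delta_n(\theta)-\delta_n(\tilde\theta)$ with $\tilde\theta=\theta_0+s/\sqrt n$. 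Your choice $\epsilon_2=a_n/d>0$ via Young's inequality is correct but not needed: as your own bookkeeping shows, $\sup_K\|r\|=o_p(d^{-1/2})$ and $\|A\lambda\|,\ \|\nu_n(\theta_0)\|=O_p(\sqrt d)$ make \emph{every} term on the right side $o_p(1)$ uniformly on $K$, so one can simply take $\epsilon_2=0$ and $\epsilon_1=o_p(1)$ as the paper does.

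The genuine gap is in the C.1 verification. You deduce ``$\|S_n(\theta)\|\gtrsim(\sqrt\mu\,\|\lambda+s\|)\wedge(\sqrt n\,\delta)$ on $\Lambda\setminus K$'' from the two facts that $\sqrt n\,\|M(\theta)\|\ge(\sqrt\mu\,\|\lambda+s\|)\wedge(\sqrt n\,\delta)$ and $\sup_{\theta\in\Theta_n}\|\nu_n(\theta)\|=o_p(\sqrt n)$. That inference does not hold on the intermediate annulus where $\|\lambda+s\|\asymp C\sqrt d$: there the identification lower bound is of order $\sqrt{\mu d}\ll\sqrt n$, so a crude $o_p(\sqrt n)$ bound on the noise does not preclude $\nu_n(\theta)$ from cancelling $\sqrt n\,M(\theta)$, and ``once $C$ is large'' cannot rescue it because $C$ does not enter the $o_p(\sqrt n)$ rate. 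What is actually required --- and what the paper's Lemma~\ref{Lemma:Kc2} establishes in Steps~1--3 --- is the refined decomposition $\nu_n(\theta)=S_n(\theta_0)+W_n(\theta)$ together with (i) $\|S_n(\theta_0)\|=O_p(\sqrt d)$, so that $\tilde C$ (hence $C$) can be chosen to make $\|S_n(\theta_0)\|\le\tfrac15\|\Ep S_n(\theta)\|$ on the annulus $\|\theta-\theta_0\|\ge\tilde C\sqrt{d/n}$, and (ii) the \emph{local-rate} bound $\|W_n(\theta)\|=O_p\left(\sqrt{d\log n}\,\|\theta-\theta_0\|^\alpha+n^{-1/2}d^{3/2}\log n\right)=o_p\left(\sqrt n\,(\|\theta-\theta_0\|\wedge\delta)\right)$ uniformly on that annulus, which is where all three parts of ZE.3 are used (not just ZE.3(c)). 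With these two refinements in place, your estimate of $\int_{\Lambda\setminus K}\ell/\int_K\ell$ --- exponentially small numerator, $\gtrsim c_1^d$ denominator, volume factor absorbed by ZE.3 --- does go through as you sketch it.
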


\begin{remark}
Theorems \ref{Thm:CLT} and
\ref{Theorem:Zest} imply the asymptotic normality of the quasi-posterior,
$$
\int_{\Lambda} |f(\lambda) - \phi(\lambda)| d \lambda = o_p(1),
$$
where
$$
\phi(\lambda) =  \frac{1}{ (2\pi)^{d/2} \det{ J}^{1/2}}
\exp \(- \frac{1}{2}\lambda' J \lambda\).
$$
Theorem \ref{Thm:Main} implies further that the main results of
the paper on the polynomial time sampling and integration apply to
the quasi-posterior density formulated for the Z-estimation framework. \end{remark}

\section{Conclusion}\label{Sec:Improv}

In this paper we study the implications of the
statistical large sample theory for computational
complexity of Bayesian and quasi-Bayesian
estimation carried out using a canonical
Metropolis random walk. Our analysis permits the
parameter dimension of the problem to grow to
infinity and allows the underlying log-likelihood
or extremum criterion function to be
discontinuous and/or non-concave. We establish
polynomial complexity by exploiting a central
limit theorem framework which provides the structural
restriction on the problem, namely, that the posterior
or quasi-posterior density approaches a normal
density in large samples.

We focused the analysis on (general) Metropolis random
walks and provided specific bounds for a canonical Gaussian random walk.
Although it is widely used for its simplicity, this canonical
random walk is not the most sophisticated algorithm available.
Thus, in principle further improvements could be obtained by
considering different kinds of algorithms, for example, the
Langevin diffusion \cite{RGG,Stramer-Tweedie,RT,Atchade}. (Of course, the
algorithm requires a smooth gradient of the log-likelihood
function, which rules out the nonsmooth and discontinuous cases
emphasized here.) Another important research direction, as
suggested by a referee, could be to develop sampling and
integration algorithms that most effectively exploit the proximity
of the posterior to the normal distribution.

\section*{Acknowledgement}

We would like to thank Ron Gallant, Lars Hansen,
Jerry Hausman, James Heckman, Roger Koenker,
Steve Portnoy, Nick Polson, Santosh Vempala and
participants of seminars at the University of
Chicago, MIT, Duke, and the INFORMS Conference,
for useful comments. We also thank two referees, an associate
editor of the journal, Moshe Cohen, Raymond Guiteras, Alp Simsek,
Paul Schrimpf, and Theophane Weber for
thorough readings of the paper and their valuable
comments that have considerably improved this
paper.

\begin{appendix}

\section{Proofs of Other Results}\label{App:Proofs}

\Aproof{Theorem}{Thm:CLT} From C.1 it follows that
$$
\begin{array}{rcl}
\displaystyle \int_{\Lambda} |f(\lambda) - \phi(\lambda)| d
\lambda & \leq &\displaystyle  \int_{K} |f(\lambda) -
\phi(\lambda)| d \lambda + \int_{K^{c}}\left( f(\lambda) + \phi(\lambda) \right)  d\lambda \\
&=&\displaystyle  \int_{K} |f(\lambda) - \phi(\lambda)| d \lambda + o_p(1)\\
\end{array}
$$

Now, denote $\displaystyle C_n = \frac{(2\pi)^{d/2} \det{(J^{-1})}^{1/2}}{\int_K \ell(\omega) d\omega} $ and write %
$$
\begin{array}{rcl}
\displaystyle \int_{K} \left|\frac{f(\lambda)}{\phi(\lambda)} -
1\right| \phi(\lambda) d \lambda & = &\displaystyle  \int_{K}
\left|C_n \cdot
\exp\left(\ln \ell(\lambda) - \left(-\frac{1}{2} \lambda'J\lambda\right) \right) - 1 \right| \phi(\lambda) d \lambda \\
\end{array}
$$
Combining the expansion in C.2 with conditions imposed in C.3,
$$
\begin{array}{rcl}
\displaystyle \int_{\Lambda}
\left|\frac{f(\lambda)}{\phi(\lambda)} - 1\right| \phi(\lambda) d
\lambda & \leq &  \int_{K} \left|C_n \cdot \exp\left( \epsilon_1 +
\epsilon_2 \lambda'J\lambda \right) - 1 \right|
\phi(\lambda) d \lambda \\
& & + \int_{K} \left|C_n \cdot
\exp\left( -\epsilon_1 - \epsilon_2 \lambda'J\lambda \right) - 1 \right| \phi(\lambda) d \lambda\\
&\leq & \displaystyle 2\int_K\left|C_n \cdot e^{o_{p}(1)} - 1 \right| \phi(\lambda) d \lambda\\
& \leq &  2|C_n  e^{o_p(1)} -1 |
\end{array}
$$
The proof then follows by showing that $C_n \to_p 1$. Using condition C.1 on the set $K=B(0, \|K\|)$ and C.2,
$$
\begin{array}{rcl}
\displaystyle \frac{1}{C_n} & \geq & \frac{\displaystyle  \int_{K} \ell (\lambda) d\lambda}{\displaystyle (1 + o(1) )\int_{K} e^{-\frac{1}{2}\lambda'J\lambda} d\lambda} %
\geq  \displaystyle \frac{\displaystyle  \int_{K}
e^{-\frac{1}{2}\lambda'J\lambda} e^{-\epsilon_1-
\frac{\epsilon_2}{2} (\lambda'J\lambda)} d\lambda}
{\displaystyle (1 + o(1) )\int_{K} e^{-\frac{1}{2}\lambda'J\lambda}  d\lambda} \\
\\
&  =  & \displaystyle
\frac{e^{-\epsilon_1}}{(1 + o(1) )}\sqrt{\frac{\det(J)}{\det(J+\epsilon_2 J)}} \frac{\displaystyle  \int_{K}  \frac{e^{-\frac{1}{2}\lambda'(J+\epsilon_2 J)\lambda}}{(2\pi)^{d/2}\det((J+\epsilon_2J)^{-1})^{1/2}} d \lambda} {\displaystyle  \int_{K} \frac{e^{-\frac{1}{2}\lambda'J\lambda}}{(2\pi)^{d/2}\det(J^{-1})^{1/2}} d\lambda}. %
\end{array}
$$

\noindent Since $\epsilon_2 < 1/2$, we can define $W \sim
N(0,(1+\epsilon_2)^{-1}J^{-1})$ and $V \sim N(0,J^{-1})$ and
rewrite our bound as $$
\begin{array}{rcl}
\displaystyle \frac{1}{C_n}
&\geq & \displaystyle  \frac{e^{-\epsilon_1}}{(1 + o(1) )}\left( \frac{1}{1+\epsilon_2}\right)^{d/2} \frac{ P ( \|W\| \leq \|K\| ) }{P ( \|V\| \leq \|K\| )} \\ %
&\geq & \displaystyle  \frac{e^{-\epsilon_1}}{(1 + o(1) )}\left( \frac{1}{1+\epsilon_2}\right)^{d/2}  \\ %
\end{array}
$$ \noindent where the last inequality follows from $ P ( \|W\| \leq
\|K\|) \geq P( \|\sqrt{1+\epsilon_2} W\| \leq \|K\| ) = P ( \|V\| \leq \|K\|
)$. Likewise, $$
\begin{array}{rcl}
\displaystyle \frac{1}{C_n} \leq \frac{\displaystyle  \int_{K} \ell (\lambda) d\lambda}{\displaystyle \int_{K} e^{-\frac{1}{2}\lambda'J\lambda} d\lambda}
&\leq & \displaystyle  e^{\epsilon_1} \left( \frac{1}{1-\epsilon_2}\right)^{d/2}  \\ %
\end{array}
$$
Therefore $C_n \to_p 1$ since $\epsilon_1 \to_p 0$ and $\epsilon_2 \cdot
d \to_p 0$ (cf. Comment \ref{comment: on conditions}). $\qed$

\Aproof{Lemma}{Lemma1}  The result follows
immediately from equations (\ref{Def:g})-(\ref{Def:m}).  \qed

\Aproof{Lemma}{betaISO} Let $M := \beta \frac{2t
e^{-t^2/4}}{\sqrt{\pi}}$. Take any measurable partition of $K = S_1
\cup S_2 \cup S_3$, with $d(S_1,S_2) \geq t$. It  suffices to prove that
$$ \int \left( M1_{S_{i}}(x) - 1_{S_{3}}(x) \right) f(x) dx < 0 ,\mbox{~for ~} i = 1 \text{ or } i= 2.$$
We will prove this by
contradiction.  Suppose that
$$ \int \left( M1_{S_{i}}(x) - 1_{S_{3}}(x) \right) f(x) dx > 0 ,\mbox{~for ~} i = 1 \text{ and } i=2.$$
\noindent We will use the Localization Lemma of Kannan, Lov\'asz,
and Simonovits \cite{KLS} in order to reduce a high-dimensional
integral to a low-dimensional integral.
\begin{lemma}[Localization Lemma]
Let $g$ and $h$ be two lower semi-continuous Lebesgue integrable
functions on $\RR^d$ such that
$$ \int_{\RR^d} g(x) dx > 0 \mbox{~~~and~~~} \int_{\RR^d} h(x) dx > 0. $$
\noindent Then there exist two points $a,b \in \RR^d$, and a
linear function $\tilde \gamma:[0,1] \to \RR_+$ such that
$$ \int_0^1 \tilde \gamma^{d-1}(t) g((1-t)a+tb)dt > 0 \mbox{~~~and~~~}  \int_0^1 \tilde \gamma^{d-1}(t) h((1-t)a+tb) dt > 0, $$%
\noindent where $([a,b],\tilde \gamma)$ is said to form a needle.
\end{lemma}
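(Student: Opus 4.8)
The plan is to follow Kannan, Lov\'asz, and Simonovits. First I would make two routine reductions. Since a lower semicontinuous function bounded below is an increasing pointwise limit of continuous (indeed Lipschitz) functions, monotone convergence lets me replace $g$ and $h$ by continuous approximants without destroying the positivity of their integrals; multiplying by a smooth bump then reduces to $g,h$ continuous with compact support, contained in a large ball $Z_0$ with $\int_{Z_0}g>0$ and $\int_{Z_0}h>0$. The engine of the argument is a bisection lemma: if $Z\subset\RR^d$ is a compact convex body with $\int_Z g>0$ and $\int_Z h>0$, there is a hyperplane $H$ splitting $Z$ into convex pieces $Z^+$ and $Z^-$ with $\int_{Z^+}g=\int_{Z^-}g=\tfrac12\int_Z g$ and $\int_{Z^+}h=\int_{Z^-}h=\tfrac12\int_Z h$. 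This is a ham-sandwich argument: for each unit direction $u$ there is an (essentially unique) hyperplane $H_u\perp u$ that bisects $\int_Z g$; the map $u\mapsto \int_{Z\cap H_u^+}h-\int_{Z\cap H_u^-}h$ is continuous and odd on the sphere $S^{d-1}$, hence vanishes at some $u$, and the corresponding hyperplane bisects both measures at once.

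Next I would iterate. Starting from $Z_0$, repeatedly apply the bisection lemma, each time retaining one of the two halves, to obtain nested convex bodies $Z_0\supseteq Z_1\supseteq\cdots$, all with positive $g$- and $h$-integrals. It is convenient to carry a renormalizing weight along from the start, i.e.\ to phrase the iteration in terms of \emph{generalized needles} --- convex bodies equipped with a weight that is a power of an affine function --- so that the relevant weighted integrals stay bounded below instead of decaying geometrically. The choices of cut direction and of which half to keep are made so that $\bigcap_k Z_k$ is a nondegenerate segment $[a,b]$, not a single point and not a set of dimension at least two. After the appropriate rescaling, the bodies $Z_k$, viewed around $[a,b]$, converge to a \emph{needle}: the normalized transverse cross-section of $Z_k$ at parameter $t\in[0,1]$ along $[a,b]$ converges to $\gamma(t)^{d-1}$ for a nonnegative affine function $\gamma$ on $[0,1]$ --- an affine profile being exactly what a limit of nested convex frusta produces. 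Passing to the limit in the renormalized integrals yields $\int_0^1\gamma^{d-1}(t)\,g((1-t)a+tb)\,dt\ge c>0$ and likewise for $h$, which is the asserted needle.

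The hard part is precisely this last step: arranging the cuts so the bodies collapse onto a one-dimensional segment rather than a point, renormalizing so the $g$- and $h$-integrals remain bounded below in the limit, and ensuring the limiting weight is affine. This requires choosing each bisecting hyperplane so as to shrink the width of the current body in the directions transverse to the eventual needle axis (so the body flattens), a compactness argument to extract a convergent subsequence of the rescaled bodies together with their needle data, and a check that the renormalization constant stays bounded --- equivalently, that each bisecting hyperplane can be taken roughly parallel to the needle axis, so the transverse measure is genuinely halved at each stage. The one-dimensional base case is immediate: for $d=1$ a needle is just an interval $[a,b]\subset\RR$ with $\gamma$ a positive constant (which is affine), and taking $[a,b]$ large enough makes both $\int_a^b g>0$ and $\int_a^b h>0$.
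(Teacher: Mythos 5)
The paper itself does not prove this lemma: it is stated as a quoted result and the ``proof'' supplied is simply the citation to Kannan, Lov\'asz, and Simonovits \cite{KLS}. Your outline is a faithful reconstruction of the argument in that reference, so you are following essentially the route the paper relies on; the one substantive divergence is in the bisection step. You bisect both integrals exactly and simultaneously via a ham-sandwich (Borsuk--Ulam) argument on $S^{d-1}$, whereas \cite{KLS} use a more elementary device: for any fixed $(d-2)$-dimensional affine subspace $A$, rotating a hyperplane through $A$ and applying the one-dimensional intermediate value theorem yields a halfspace $H$ with $\partial H \supseteq A$ that exactly bisects $\int g$, and one then retains the side on which $\int h$ is at least half of its total. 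This buys two things: it avoids Borsuk--Ulam together with the continuity and essential-uniqueness issues for the $g$-bisecting hyperplane that your odd-map argument quietly assumes, and, more importantly, it leaves the $(d-2)$-flat $A$ as a free parameter at every stage --- precisely the degree of freedom \cite{KLS} exploit to force the nested bodies to collapse onto a nondegenerate segment rather than a point and to keep the rescaled needle data compact. Your reduction to continuous functions is sound (lower semi-continuity is used in the right direction: the approximants satisfy $g_k \leq g$, so a needle that works for $g_k, h_k$ also works for $g, h$), and your final paragraph correctly locates where all the real work lies --- the collapse to a one-dimensional limit, the compactness argument for the renormalized needles, and the verification that the limiting weight is affine --- but it only describes these steps rather than carrying them out. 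As a self-contained proof the proposal is therefore an accurate roadmap of \cite{KLS} rather than a complete argument, which puts it on the same footing as the paper's own treatment.
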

\begin{proof} See Kannan, Lov\'asz, and Simonovits \cite{KLS}. \end{proof}

By the Localization Lemma, there exists a needle $(a,b,\tilde
\gamma)$ such that
$$ \int_0^{1} \tilde \gamma^{d-1}(l) f((1-l)a+lb)\left( M 1_{S_{i}}((1-l)a+lb) - 1_{S_{3}}((1-l)a+lb) \right) du > 0 ,$$
\noindent for $i = 1,2$. Equivalently, using $\gamma(u)=\tilde
\gamma(u/\|b-a\|)$ and $v := (b-a)/\|b-a\|$ where $\|b-a\| \geq
t$, and rearranging we have for $i=1, 2$,
\begin{align}\begin{split}\label{LL:00}
M \int_0^{\|b-a\|} \gamma^{d-1}(u) & f(a+uv) 1_{S_{i}}(a+uv)du \\
  & > \int_0^{\|b-a\|} \gamma^{d-1}(u) 1_{S_{3}}(a+uv) f(a+uv)du.
\end{split}\end{align}

In order for the left hand side of (\ref{LL:00}) be positive for
$i=1$ and $i=2$, the line segment $[a,b]$ must contain points in
$S_1$ and $S_2$. Since $d(S_1,S_2)\geq t$, we have that $S_3 \cap
[a,b]$ contains an interval $[w,w+t]$ whose length is at least $t$. Thus, we can partition the line segment $[a,b]$ into $[0, w) \cup [w, w+t] \cup (w+t, \|b- a\|]$. We will prove that for every $w \in \RR $ such that $0 \leq w \leq w+t \leq \|b-a\|${\small
\begin{equation}\label{MainEq:01}
\begin{array}{rc}
\displaystyle  \int_w^{w+t} \gamma^{d-1}(u) f(a+uv)du &
\displaystyle\geq M \min\left\{
\int_{0}^w \gamma^{d-1}(u) f(a+uv)du, \right. \\
&\displaystyle \ \left. \int_{w+t}^{\|b-a\|} \gamma^{d-1}(u)
f(a+uv)du \right\}
\end{array}
\end{equation}} which contradicts the relation (\ref{LL:00}) and
proves the lemma.

 First, note that $f(a+uv) = e^{-\|a+uv\|^2}m(a+uv) =
e^{-u^2 + r_1 u + r_0} m(a+uv)$ where $r_1 := 2 a'v$ and $r_0 :=
-\|a\|^2$. Next, recall that $m(a+uv)\gamma^{d-1}(u)$ is still a
unidimensional log-$\beta$-concave function on $u$. By Lemma
\ref{STRbr} presented in Appendix B, there exists a unidimensional
logconcave function $\hat m$ such that $ \beta \hat m(u) \leq
m(a+uv)\gamma^{d-1}(u) \leq \hat m(u)$ for every $u$. Moreover,
there exists numbers $s_0$ and $s_1$ such that $\hat m (w) = s_0
e^{s_1 w}$ and $\hat m (w+t) = s_0 e^{s_1(w+t)}$. Due to the
log-concavity of $\hat m$, this implies that
$$ \hat m(u) \geq s_0 e^{s_1 u} \ \ \mbox{for} \ u \in (w,w+t) \ \
\mbox{and} \ \ \hat m(u) \leq s_0 e^{s_1 u} \ \
\mbox{otherwise.}$$

Thus, if we replace $m(a+uv)\gamma^{d-1}(u)$ by $s_0 e^{s_1u}$ on
the right hand side of (\ref{MainEq:01}) and replace
$m(a+uv)\gamma^{d-1}(u)$ by $\beta s_0 e^{s_1u} $ on the left hand
side of (\ref{MainEq:01}), and define $\hat r_1 = r_1 + s_1$
and $\hat r_0:= r_0 + \ln s_0$, we obtain the relation  $$ \beta
\int_w^{w+t} e^{-u^2 + \hat r_1 u +\hat r_0} du \geq M \min\left\{
\int_{0}^w e^{-u^2 + \hat r_1 u +\hat r_0} du, \
\int_{w+t}^{\|b-a\|} e^{-u^2 + \hat r_1 u +\hat r_0} du \right\}.
$$ This relation is stronger than (\ref{MainEq:01}) and thus implies (\ref{MainEq:01}). This relation is equivalent to
 \begin{equation}\label{MainEq:04} \begin{array}{rc} \beta
\displaystyle \int_w^{w+t} e^{-(u - \frac{\hat r_1}{2})^2 + \hat
r_0 + \frac{\hat r_1^2}{4}} du &\displaystyle \geq M \min\left\{
\int_{0}^w e^{-(u - \frac{\hat r_1}{2})^2 + \hat r_0 + \frac{\hat
r_1^2}{4}} du,
\right. \\
& \displaystyle\left. \ \int_{w+t}^{\|b-a\|} e^{-(u - \frac{\hat
r_1}{2})^2 + \hat r_0 + \frac{\hat r_1^2}{4}} du \right\}.
\end{array}\end{equation} Now, cancel the term $e^{\hat r_0 + \hat
r_1^2/4}$ on both sides and, since we want the inequality
(\ref{MainEq:04}) holding for any $w$,  (\ref{MainEq:04}) is
implied by
\begin{equation}\label{MainEq:05}
\int_w^{w+t}  e^{-u^2}  du \geq \frac{2te^{-t^2/4}}{\sqrt{\pi}}
\min\left\{ \int_{-\infty}^w  e^{-u^2} du, \ \int_{w+t}^{\infty}
e^{-u^2} du \right\}
\end{equation}
holding for any $w$.  This inequality is Lemma 2.2 in Kannan and
Li \cite{KLi}.$\qed$


\Aproof{Corollary}{Corollary:Iso} Consider the change of variables
$\tilde x = \frac{J^{1/2}x}{\sqrt{2}}$ and $\tilde S = \frac{J^{1/2}S}{\sqrt{2}}$. Then, in $\tilde x$
coordinates, $f(\tilde x) = e^{\tilde x'\tilde
x}m(\sqrt{2}J^{-1/2}\tilde x)$ satisfies the assumption of Lemma
\ref{betaISO} and $d(\tilde S_1, \tilde S_2) \geq \frac{t\sqrt{\lambda_{min}}}{\sqrt{2}}$. The
result follows by applying Lemma \ref{betaISO} with $\tilde x$
coordinates. $\qed$

  \Aproof{Lemma}{GeoLips} The result is immediate from the stated assumptions. $\qed$

\Aproof{Theorem}{Thm:Main} See section \ref{Sec:ProofMain}. $\qed$

 \Aproof{Lemma}{GeoProb} Define $K :=
B(0,R)$, so that $R$ is the radius of $K$; also let $r:=
4\sqrt{d}\sigma$ (where $\sigma^2 \leq \frac{1}{16dL^2}$), and let
$q(x|u)$ denote the normal density function centered at $u$ with
covariance matrix $\sigma^2I$. We use the following notation: $B_u
= B(u,r)$, $B_v = B(v,r)$, and $A_{u,v} = B_u \cap B_v \cap K$. By
definition of $r$, we have that $\int_{B_u} q(x|u) dx = \int_{B_v}
q(x|v) dx \geq 1- P\{|U| \geq 4\} > 1- 1/10^4$, where $U \sim N(0,1)$.

Define the direction $w = (v-u)/\|v-u\|$.  Let $H_1 = \{x \in B_u
\cap B_v : w'(x-u) \ge \|v-u\|/2\}$, $H_2 = \{x \in B_u \cap B_v :
w'(x-u) \le \|v-u\|/2\}$.   Consider the one-step distributions
from $u$ and $v$.  We first observe that in view of Lemma \ref{Lemma1} and Lemma \ref{GeoLips} that
$\inf_{x\in B(y,r)} f(x)/f(y) \geq \beta e^{-Lr}$.
Then we have that
\begin{eqnarray*}
& & \|P_u-P_v\|_{TV} \le 1 - \int_{K}
\min\{dP_u,dP_v\} \leq 1 - \int_{A_{u,v}}
\min\{dP_u,dP_v\} \\
& & = 1 - \int_{A_{u,v}} \min
\left\{q(x|u)\min\left\{\frac{f(x)}{f(u)},1\right\},q(x|v)\min\left\{\frac{f(x)}{f(v)},1\right\}\right\}
dx \\
& & \le 1 - \beta e^{-Lr} \int_{A_{u,v}} \min
\left\{q(x|u),q(x|v)\right\}
dx \\
& & \le 1 - \beta e^{-Lr} \left( \int_{H_1 \cap K} q(x|u) dx +
\int_{H_2 \cap K} q(x|v) dx \right), \end{eqnarray*} where
$\|u-v\| < \sigma/8$.   Next we will bound from below the last sum
of integrals for an arbitrary $u \in K$.

We first bound the integrals over the possibly larger sets,
respectively $H_1$ and $H_2$.    Let $h$ denote the density
function of a univariate random variable distributed as
$N(0,\sigma^2)$.  It is easy to see that
$h(t)=\int_{w'(x-u)=t}q(x|u)dx$, i.e. $h$ is the marginal density
of $q(\cdot|u)$ along the direction $w$  up to a translation.   Let
$H_3 = \{x : - \|u-v\|/2 < w'(x-u) < \|v-u\|/2\}$.  Note that $B_u
\subset H_1 \cup \left(H_2 - \|u-v\|w\right)\cup H_3$ where the
union is disjoint.  Armed with these observations, we have {\small
\begin{eqnarray} \int_{H_1} q(x|u) dx + \int_{H_2} q(x|v) dx &=&
\int_{H_1}
q(x|u) dx + \int_{H_2 - \|u-v\|w} q(x|u)dx \notag \\
&\ge& \int_{B_u} q(x|u)dx - \int_{H_3} q(x|u)dx \notag \\
&=& \int_{B_u} q(x|u)dx - \int_{-\|u-v\|/2}^{\|u-v\|/2}h(t)dt \notag \\
&\ge& 1 - \frac{1}{10^4}-
\int_{-\|u-v\|/2}^{\|u-v\|/2}\frac{e^{-t^2/2\sigma^2}}{\sqrt{2
\pi}
\sigma}dt \notag \\
&\ge& 1 - \frac{1}{10^4} - \|u-v\|\frac{1}{\sqrt{2 \pi}\sigma} \notag \\
&\ge & 1 - \frac{1}{10^4} - \frac{1}{8\sqrt{2 \pi}} \ge
\frac{9}{10}, \label{noKineq}
\end{eqnarray}}
where we used that $\|u-v\|<\sigma/8$ by the hypothesis of the
lemma.

In order to take the support $K$ into account, we can assume that
$u,v \in \partial K$, i.e. $\|u\|=\|v\|=R$ (otherwise the integral
will be larger).  Let $z = (v+u)/2$ and define the half space $H_z
= \{x: z'x \le z'z\}$ whose boundary passes through $u$ and $v$
(Using $\|u\|=\|v\|=R$ it follows that $z'v=z'u=z'z/2$).

 By the symmetry of the normal density, we have
$$ \int_{H_1 \cap H_z} q(x|u) dx = \frac{1}{2}\int_{H_1} q(x|u) dx.
$$
Although $H_1 \cap H_z$ does not lie in $K$ in general, simple
arithmetic shows that $H_1 \cap \left(H_z -
\frac{r^2z}{R\|z\|}\right) \subseteq K$.\footnote{Indeed, take $y
\in H_1 \cap \left( H_z - \frac{r^2}{R} \frac{z}{\|z\|} \right) $.
We can write $y =
\frac{z}{\|z\|}\left(\frac{y'z}{\|z\|}\right)+s$, where $\|s\|\leq
r$ (since $\left\|y -
\frac{z}{\|z\|}\left(\frac{y'z}{\|z\|}\right)\right\| \leq \|y-z\|
= \|y - \frac{u+v}{2}\| \leq \frac{1}{2}\|y -u \| + \frac{1}{2}\|y
- v\| \leq r$) and $s$ is also orthogonal to $z$. Since $y \in
\left( H_z - \frac{r^2}{R} \frac{z}{\|z\|} \right)$, we have
$\frac{y'z}{\|z\|} \leq \frac{z'z}{\|z\|} - \frac{r^2}{R} = \|z\|
- \frac{r^2}{R} \leq R - \frac{r^2}{R}$. Therefore, $\| y \| =
\sqrt{ \left(\frac{y'z}{\|z\|}\right)^2 + \|s\|^2} \leq \sqrt{ (R
- \frac{r^2}{R})^2 +r^2}=\sqrt{R^2 - r^2(1- \frac{r^2}{R^2})} \leq R.$}

Using that $\int_{H_z \setminus (H_z - \frac{r^2z}{R\|z\|})}
q(x|u) = \int_0^{r^2/R}h(t)dt$, we have

{\small \begin{eqnarray*} \int_{H_1 \cap K} q(x|u)dx &\ge &
\int_{H_1 \cap \left(H_z - \frac{r^2z}{R\|z\|}\right)} q(x|u)dx
\ge \int_{H_1 \cap
H_z}q(x|u)dx - \int_0^{r^2/R}h(t)dt \\
&\ge& \frac{1}{2}\int_{H_1}q(x|u)dx -
\int_0^{r^2/R}\frac{e^{-t^2/2\sigma^2}}{\sqrt{2\pi}\sigma}dt \\
&\ge& \frac{1}{2}\int_{H_1}q(x|u)dx - 4\sqrt{d}\sigma
\frac{1}{30\sqrt{d}}\frac{1}{\sqrt{2\pi}\sigma}, \end{eqnarray*}}
where we used that $\frac{r}{R} < \frac{1}{30 \sqrt{d}}$ since $r
= 4\sqrt{d}\sigma$ and $\frac{\sigma}{R} < \frac{1}{120d}$.

 By symmetry, the same inequality holds when $u$ and $H_1$
are replaced by $v$ and $H_2$ respectively.  Adding these
inequalities and using (\ref{noKineq}), we have
\begin{equation}\label{bound over H1 and H2} \left(
\int_{H_1 \cap K} q(x|u) dx + \int_{H_2 \cap K} q(x|v) dx \right)
\ge \frac{9}{20} - \frac{4}{15 \sqrt{2 \pi}} \ge 1/3.
\end{equation}
 Thus, we
have $$ \|P_u - P_v\| < 1 - \frac{\beta}{3}e^{-Lr} $$ and the
result follows since $Lr \le 1$. \qed

\Aproof{Lemma}{InitialDraw}  We calculate the probability $p$ of making a proper move. We will use the notation defined in
the proof of Lemma \ref{GeoProb}. Let $u$ be an arbitrary point in
$K$. We have that
$$\begin{array}{rcl}
p_u & = & \int_K \min\left\{ \frac{f(x)}{f(u)}, \ 1\right\} q(x|u)dx
\geq \beta e^{-Lr} \int_{B_u \cap K} q(x|u)dx  \geq \beta e^{-Lr} \frac{1}{3}, \
\end{array}
$$
where we used  that
$\inf_{x\in B(y,r)} f(x)/f(y) \geq \beta e^{-Lr}$ by Lemma \ref{Lemma1} and Lemma \ref{GeoLips} and the bound (\ref{bound over H1 and H2}) for the case that
$u = v$ so that $B_u = H_1 \cup H_2$. Since $Lr < 1$
we conclude that $p_u \geq \beta/3e $.

We then note that for $Q(A)>0$ the ratio $Q_0(A)/Q(A)$
is  bounded above by   $\sup_{x \in K} d Q_0(x)/dQ(x)$;
$d Q_0(x)/dx$ is bounded above by  $p_u^{-1} e^{-\|x\|^2/2\sigma^2} \cdot (2\pi \sigma^2)^{-d/2} \leq p_u^{-1} \cdot (2\pi \sigma^2)^{-d/2}$; and $d Q(x)/dx$ is bounded over $x \in K$ below by  $(2\pi)^{-d/2}$ $\det(J^{1/2})$ $e^{-\frac{1}{2}x'Jx}$ $\beta^{1/2} \geq (2\pi)^{-d/2} \lambda_{min}^{d/2} e^{-\frac{1}{2}\|K\|^2_J} \beta^{1/2}$, where $\beta=e^{-2(\epsilon_1 +\epsilon_2\|K\|_J^2/2)}$. Thus, we can bound
$$
 \begin{array}{rcl}
\max_{A \in \mathcal{A}: Q(A)>0} \frac{Q_0(A)}{Q(A)}
& & \leq
p_u^{-1} \sigma^{-d }
 \lambda_{min}^{-d/2} e^{\frac{1}{2}\|K\|^2_J} \beta^{-1/2} \\
& & \leq  3e  [120\sqrt{d}\lambda_{max} \|K\|/\sqrt{\lambda_{min}}]^d
e^{\frac{1}{2}\|K\|^2_J} \beta^{-3/2} \\
&  & \leq  3 [120\|K\|_J^2]^d
e^{3\epsilon_1 +
2\epsilon_2\|K\|^2_J +1},
\end{array}
$$
where we used the bound on $\sigma$ given in (\ref{Eq:Bound:sigma}), and the fact that $\|K\|_J \geq \sqrt{\lambda_{\min}} \|K\|$ and
$\|K\|_{J} > \sqrt{d} \ \sqrt{\lambda_{max}/\lambda_{min}}$ (cf. Comment \ref{comment: on conditions}).

The remaining results in the Lemma follow by invoking the CLT conditions. \qed

 \Aproof{Theorem}{NNN}  We have that, for $\lambda^B$ denoting the random variable with law
$Q_B$ and $\lambda$ denoting the random variable with law $Q$,  and $MSE( \est |X)$ denoting the mean square error $E[(\widehat{\mu}_{g} - \mu_g)^2 |X]$  conditional on the element
$\lambda^{0,B}$ drawn according to $X= \lambda^B$ or $X=\lambda$:
$$
\begin{array}{rcl}
MSE( \est ) &  = &  \displaystyle E_{Q_B} \left[ MSE( \est |\lambda^B) \right] = \displaystyle E_Q \left[ MSE( \est |\lambda) \frac{dQ_B(\lambda)}{dQ(\lambda)} \right]  \\
& = & \displaystyle E_Q \left[ MSE( \est|\lambda ) \right] + E_Q \left[ MSE( \est |\lambda) \left(\frac{dQ_B(\lambda)}{dQ(\lambda)}-1\right) \right] \\
& \leq & \displaystyle E_Q \left[ MSE( \est |\lambda ) \right] +   4  \bar g^2E_Q \[\left| \frac{dQ_B(\lambda)}{dQ(\lambda)}-1\right|\]  \\
& = & \displaystyle (\sigma^2_{g,N}/N) +  8\bar g^2 \| Q_B - Q\|_{TV}, \\
\end{array}
$$
\noindent where $\sigma^2_{g,N}$ is $N$ times the variance of the sample
average when the Markov chain starts from the stationary distribution $Q$.
We also used the fact that $\|Q_B-Q\|_{TV} = \frac{1}{2} \int | dQ_B/dx- dQ/dx| dx$.

The bound on $\sigma^2_{g,N}$ will depend on the particular
scheme, as discussed below.  We begin by bounding the burn-in period $B$.

We require that the second term in the bound for $MSE( \est )$ to be smaller than $\varepsilon/3$,
which is equivalent to imposing that $ \| Q_B - Q\|_{TV} <
\frac{\varepsilon}{24 \bar g^2}$. Using the conductance theorem of  \cite{LS} restated in equation
(\ref{LS}), since
$Q_0$ is  $M$-warm with respect to $Q$, we require that
$$
\begin{array}{rcl}
\sqrt{M}\left( 1 - \frac{\phi^2}{2} \right)^B & \leq & \displaystyle \sqrt{M} e^{-B\frac{\phi^2}{2}} \leq \frac{\varepsilon}{24 \bar g^2}\text{ or }
B \geq   \frac{2}{\phi^2} \ln \left(
\frac{24\sqrt{M} \bar g^2}{\varepsilon} \right).
\end{array}
$$

Next we bound  $\sigma^2_{g,N}$.  Specifically, we determine the
number of post-burn-in iterations $N_{lr}$, $N_{ss}$, or $N_{ms}$
needed to set $MSE( \est ) \leq \varepsilon$.

 1. To bound $N_{lr}$,  note that $\sigma^2_{g,N} \leq \gamma_0 \frac{4}{\phi^2}$ where the last
inequality follows from the conductance-based covariance bound of \cite{LS} restated in equation (\ref{Cor:Cov}). Thus, $N_{lr} =
\frac{\gamma_0}{\varepsilon} \frac{6}{\phi^2}$ and $B$ set above
suffice to obtain $MSE(\est) \leq \varepsilon$.

 2. To bound $N_{ss}$, we first must choose a spacing $S$ to ensure
that the autocovariances are sufficiently small. We
start by bounding
$$\sigma^2_{g,N} \leq \gamma_0 + 2 N |\gamma_S| \leq \gamma_0 + 2 N  \gamma_0 \left( 1 - \frac{\phi^2}{2} \right)^S, $$
\noindent where we used the conductance-based covariance bound of \cite{LS} restated in equation (\ref{Cor:Cov}) and that
$\lambda^{i,B}$ and $\lambda^{i+1,B}$ are spaced by $S$ steps of
the chain. By choosing $S$ as
$$
\displaystyle  \left (1 - \frac{\phi^2}{2} \right)^S \leq
e^{-S\frac{\phi^2}{2}}  \leq \displaystyle \frac{\varepsilon}{6
\gamma_0}, \text{ or } S \geq \displaystyle \frac{2}{\phi^2}
\ln\left( \frac{6 \gamma_0}{\varepsilon} \right),
$$
and using $\displaystyle N_{ss} = \frac{3\gamma_0}{\varepsilon}$,
we obtain
$$
\begin{array}{rcl}
MSE(\hat \mu_{g}) &  \leq &\displaystyle  \frac{1}{N_{ss}} \left( \gamma_0 + 2 N_{ss}  |\gamma_S|\right) +  8\bar g^2 \| Q_B - Q \|_{TV} \\
& \leq & \displaystyle\frac{\varepsilon}{3\gamma_0} \left( \gamma_0 + 2 \frac{3\gamma_0}{\varepsilon}  \gamma_0 \frac{\varepsilon}{6 \gamma_0} \right) +  \bar g^2 \frac{\varepsilon}{3 \bar g^2} \leq \varepsilon
\end{array}
$$

3. To bound $N_{ms}$, we observe, using that $\lambda^{i,B}, i=1,2,...,$ are i.i.d. across $i$, that $MSE(\est) \leq \frac{\gamma_0}{N_{ms}} + \varepsilon/3 \leq \varepsilon$ provided that $N_{ms} \geq  2
\gamma_0/(3 \varepsilon)$. $\qed$ \\

\Aproof{Theorem}{Cond:Exp} Given $$ K = B ( 0, \|K\|) \ \text{
where } \ \|K\|^2 = c d ,
$$ condition C.1 holds by an argument given in proof of Ghosal's Lemma 4.  Let $\lambda_n(c) =  \sqrt{\frac{c d }{n}} B_{1n}(0) +  \frac{c d}{n}
B_{2n}(c)$. Our condition C.2 is satisfied by an argument similar to that given in the proof of
Ghosal's Lemma 1 with $$\epsilon_1=O \( \lambda_n(c) \|s\|^2  \)=O_p(\lambda_n(c)d) = O_p(d^{3/2}/n^{1/2}) = o_p(1) \ \ \mbox{and}$$ $$ \epsilon_2 = O \( \lambda_n(c) \) = O_p\( d^{1/2}/n^{1/2} \) = o_p(1/d),
$$
and our condition C.3 is satisfied since $\epsilon_2
\|K\|^2_J = o_p(1)$. \qed

\begin{remark}
 Ghosal \cite{G2000} proves his results for the support set $K' = B(0, C
\sqrt{d} \log d)$. His arguments actually go through for the
support set $K = B(0, C \sqrt{d} )$  due to the concentration of
normal measure under $d \to \infty$ asymptotics. For details, see
\cite{BC}.
\end{remark}

\Aproof{Theorem}{Cond:CExp} Take $ K = B ( 0, \|K\|)$, where $
\|K\|^2 = C d_1 $ for some $C$ sufficiently large independent of
$d$ (see \cite{BC} for details). Let $\lambda_n(c) =  \sqrt{\frac{c d }{n}} B_{1n}(0) +  \frac{c d}{n}
B_{2n}(c)$. Then condition C.1 is satisfied
by the argument given in the proof of Ghosal's Lemma 4 and NE.3.
Further, condition C.2 is satisfied by the argument similar to that given in the
proof of Ghosal's Lemma 1 and by NE.3 with \bs \epsilon_1 &=
O_p\left(\delta_{1n}d^{1/2} + \delta_{2n}d  + \lambda_n(C)(\delta_{1n}d^{1/2} + \delta_{2n}d^{1/2} +  d)\right) = o_p(1), \\
\epsilon_2 &= O_p \( \lambda_n(C) \) = o_p(d^{1/2}/n^{1/2})=o_p(1/d),
\end{split}\end{align}
and condition C.3 is satisfied since $\epsilon_2 \|K\|^2_J =o_p(1).$ \qed

\begin{remark}
For further details, see \cite{BC}.
\end{remark}


\Aproof{Theorem}{Theorem:Zest} We will first establish the
following linear approximation for $S_n(\theta)$ in a neighborhood
of $\theta_0$
\begin{equation}\label{HL-Zest-2}
\sup_{\|\theta - \theta_0\| \leq C \sqrt{d/n} } \| S_n(\theta) -
S_n(\theta_0) - n^{1/2}A ( \theta - \theta_0) \| = o_p\(d^{-1/2}\)
\end{equation}
for any fixed constant $C>0$. For notational convenience let
{\small
\begin{equation}\label{Def:DeltaW} \delta_n (\theta) = S_n(\theta)
- S_n(\theta_0) - n^{1/2}A ( \theta - \theta_0), \  W_n(\theta) =
S_n(\theta) - S_n( \theta_0) - \Ep\[ S_n(\theta) - S_n(\theta_0)\].
\end{equation}}
Let  $\mathcal{F}_n= \{\eta' (m(X,\theta)- m(X,\theta_0)) \ : \|\theta - \theta_0\| \leq \rho_n, \eta \in S^{d_1}  \}$.
Under condition ZE.1, we apply the following maximal inequality
adopted from He and Shao \cite{HS00}  (see \cite{BC-qrID} for details) to an empirical process indexed by members of
$\mathcal{F}_n$:
{\small \begin{equation}\label{Eq:MaxIneq}
\sup_{f \in \mathcal{F}_n} | n^{-1/2} \sum_{i=1}^n (f(X_i)-E[f(X_i)] )| = O_p\( \sqrt{V\log n} \(
\sup_{f\in \mathcal{F}_n} E[f^2]  +  n^{-1} V M^2 \log
n\)^{1/2} \). \end{equation}}
Here the multiplier $ \sqrt{V}$ arises as the order of the uniform bracketing entropy integral,
where $V$ is the VC dimension of  a VC function class $\mathcal{F}_n$ or an entropically equivalent class $\mathcal{F}_n$.
We assumed in ZE.1 that $V = O(d)$. Also  $M$ is the a.s. bound on the envelope of $\mathcal{F}_n$, assumed to be of order $O(\sqrt{d})$.
Finally,  we assumed that $\sup_{f\in
\mathcal{F}_n} (E[f^2])^{1/2} = O(\rho_n^{\alpha})$. Therefore, we have that
 uniformly in $\theta\in \Theta_n$
\begin{equation}\label{Eq:Approx}
\begin{array}{rcl}\|W_n(\theta)\| & = &  O_p\left( \
\sqrt{d \log n} \(  \| \theta - \theta_0\|^{2\alpha}   + n^{-1} d
M^2 \log n \)^{1/2}  \right)\\
&= & O_p\left( \sqrt{d\log n} \|\theta-\theta_0\|^{\alpha} +
n^{-1/2}
d^{3/2} \log n \).\\
\end{array}
\end{equation}

Note that (\ref{Eq:Approx}) and an expansion with an integral reminder around $\theta-\theta_0$ shows that uniformly in $\theta \in \Theta_n$
$$
\begin{array}{rcl}\| \delta_n(\theta) \| & \leq & \|W_n(\theta)\| + \|
\nabla^2 E[S_n(\xi)] \cdot [\theta - \theta_0, \theta-\theta_0] \|\\
& = & O_p\left( \ d^{1/2}\log^{1/2} n \|\theta-\theta_0\|^{\alpha} + n^{-1/2} d^{3/2} \log n \right) + \\
& + &  O_p\( \sqrt{dn} \|\theta -
\theta_0\|^2\)
\end{array}
$$ where $\xi$ lies between $\theta$ and $\theta_0$ and we used ZE.2 that
imposes
$\|\nabla^2E[S_n(\xi)] \cdot [\gamma, \gamma] \| = O(\sqrt{dn}\|\gamma\|^2)$.
The condition (\ref{HL-Zest-2}) follows from the growth condition
ZE.3(a).

Building upon (\ref{HL-Zest-2}), Lemmas \ref{Lemma:Zest} and
\ref{Lemma:Kc2} verify that conditions C.1-C.3 hold proving
Theorem \ref{Theorem:Zest}. $\qed$

\begin{lemma}\label{Lemma:Zest}
Under conditions ZE.1-ZE.3, conditions C.2 and C.3 hold  for $K=B(0,C\sqrt{d})$ for any fixed constant $C > 0$.
\end{lemma}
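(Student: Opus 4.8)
The plan is to substitute the linear expansion (\ref{HL-Zest-2}) into the definition of $\ell(\lambda)$, pull out the quadratic form $-\tfrac12\lambda'J\lambda$ with $J=2A'A$, and show that the remainder is uniformly $o_p(1)$ over $K=B(0,C\sqrt d)$. The one a priori dangerous cross term is annihilated exactly by the choice of the centering $s=-(A'A)^{-1}A'S_n(\theta_0)$, which is the whole reason for defining $s$ as the first-order extremum approximation.

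First I would collect the relevant orders of magnitude. On $K$ we have $\|\lambda\|\le C\sqrt d$, and since $\|s\|=O_p(\sqrt d)$, the point $\theta=\theta_0+(\lambda+s)/\sqrt n$ obeys $\|\theta-\theta_0\|\le(C+O_p(1))\sqrt{d/n}$, so with probability tending to one it, and likewise the centering point $\theta_0+s/\sqrt n$, lies in $\{\theta:\|\theta-\theta_0\|\le C'\sqrt{d/n}\}$ for a large enough fixed $C'$. Hence (\ref{HL-Zest-2}) applies and gives $S_n(\theta_0+(\lambda+s)/\sqrt n)=S_n(\theta_0)+A(\lambda+s)+\delta_n(\lambda)$ with $\sup_{\lambda\in K}\|\delta_n(\lambda)\|=o_p(d^{-1/2})$, and similarly $S_n(\theta_0+s/\sqrt n)=S_n(\theta_0)+As+\delta_n^0$ with $\|\delta_n^0\|=o_p(d^{-1/2})$. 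By ZE.1 (bounded spectral norm of $\Ep[m(X,\theta_0)m(X,\theta_0)']$ together with $d_1=O(d)$) and Chebyshev, $\|S_n(\theta_0)\|=O_p(\sqrt d)$; and since $A'A$ has eigenvalues bounded above, $\sup_{\lambda\in K}\|A\lambda\|=O(\sqrt d)$.

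The algebraic heart is the following. Let $P_A=A(A'A)^{-1}A'$ be the orthogonal projection onto the column space of $A$ and put $\zeta_n:=(I-P_A)S_n(\theta_0)$. The definition of $s$ gives $S_n(\theta_0)+As=\zeta_n$, which is orthogonal to $A\lambda$ for every $\lambda$; so $S_n(\theta_0+(\lambda+s)/\sqrt n)=\zeta_n+A\lambda+\delta_n(\lambda)$, and by the Pythagorean identity
\begin{align*}
\|S_n(\theta_0+(\lambda+s)/\sqrt n)\|^2 &= \|\zeta_n\|^2+\|A\lambda\|^2 \\
&\quad +2\langle\zeta_n+A\lambda,\ \delta_n(\lambda)\rangle+\|\delta_n(\lambda)\|^2,
\end{align*}
with the $\lambda=0$ version the same but missing the $\|A\lambda\|^2$ term. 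Subtracting, and using $Q_n=-\|S_n\|^2$ and $\|A\lambda\|^2=\tfrac12\lambda'J\lambda$, gives $\ln\ell(\lambda)=-\tfrac12\lambda'J\lambda+R_n(\lambda)$ where
$$R_n(\lambda)=-\|\delta_n(\lambda)\|^2-2\langle\zeta_n+A\lambda,\ \delta_n(\lambda)\rangle+\|\delta_n^0\|^2+2\langle\zeta_n,\ \delta_n^0\rangle.$$
Bounding term by term with Cauchy--Schwarz and the orders above ($\|\zeta_n\|\le\|S_n(\theta_0)\|=O_p(\sqrt d)$ and $\sup_{\lambda\in K}\|A\lambda\|=O(\sqrt d)$, each multiplied by $o_p(d^{-1/2})$; and $\|\delta_n(\lambda)\|^2,\|\delta_n^0\|^2=o_p(d^{-1})$) then yields $\sup_{\lambda\in K}|R_n(\lambda)|=o_p(1)$. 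Therefore C.2 holds with $J=2A'A$, whose eigenvalues are bounded away from $0$ and above by ZE.2, with $\epsilon_1:=\sup_{\lambda\in K}|R_n(\lambda)|=o_p(1)$, and with $\epsilon_2=0$; C.3 is then immediate, since $\epsilon_1=o_p(1)$ and $\epsilon_2\|K\|_J^2=0$.

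I expect the only real obstacle to be bookkeeping rather than depth. One has to confirm that (\ref{HL-Zest-2}) may be invoked with a radius large enough to cover the entire image of $K$ under $\lambda\mapsto\theta_0+(\lambda+s)/\sqrt n$ uniformly — which is precisely why the growth condition ZE.3 was used to establish (\ref{HL-Zest-2}) in the first place — and one has to notice that the a priori dangerous term $2\langle S_n(\theta_0)+As,\ A\lambda\rangle$, which would be of order $\sqrt d\,\|\lambda\|$ and hence not $o_p(1)$ on $K$, vanishes identically because $s$ is the first-order extremum approximation. The same observation (a quadratic criterion composed with an approximately linear score) is also why no genuine multiplicative tilt appears here, so that $\epsilon_2$ can be taken to be $0$, in contrast to the exponential-family case where the cubic Taylor term forces $\epsilon_2>0$.
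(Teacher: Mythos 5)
Your proof is correct and follows essentially the same route as the paper's: substitute the linearization (\ref{HL-Zest-2}), observe that the centering $s=-(A'A)^{-1}A'S_n(\theta_0)$ makes $S_n(\theta_0)+As$ orthogonal to the range of $A$ so the dangerous cross term vanishes, and bound the residual by Cauchy--Schwarz to get $\epsilon_1=o_p(1)$, $\epsilon_2=0$, $J=2A'A$. Your version in fact makes the orthogonality fully explicit via $\zeta_n=(I-P_A)S_n(\theta_0)$, which cleanly accounts for the small cross term $\langle\delta_n(\tilde\theta),A\lambda\rangle$ that the paper's algebraic display silently absorbs into its final $o_p(1)$.
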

\Aproof{Lemma}{Lemma:Zest} Let $s = -(A'A)^{-1}A'S_n(\theta_0)$ be
a first order approximation for the extremum estimator. For
$\theta = \theta_0 + (s+\lambda)/\sqrt{n}$ and $\tilde \theta = \theta_0 + s/\sqrt{n}$
$$
\begin{array}{rcl}
\ln \ell (\lambda) & = &
-\|S_n(\theta)\|^2 +
\|S_n(\tilde \theta)\|^2\\
& = & -\lambda'A'A\lambda - \| r_n\|^2 -
2r_n'A\lambda - 2
r_n'S_n(\tilde \theta)\\
& = & -\lambda'A'A\lambda  + o_p(1),
\end{array}$$ where $r_n = \delta_n(\theta) - \delta_n(\tilde \theta)$ for  $\delta_n(\theta)$ defined in (\ref{Def:DeltaW}). Indeed, using
(\ref{HL-Zest-2}) we have $\|\delta_n(\theta)\|=o_p(d^{-1/2})$ and $\|\delta_n(\tilde \theta)\|=o_p(d^{-1/2})$ uniformly over $\lambda \in K$; using (\ref{HL-Zest-2}) we have $\|S_n(\tilde \theta)\|=O_p(d^{1/2})$; and moreover,
$\|\lambda\|=O(d^{1/2})$, and $\|s\| = O_p(d^{1/2})$ by
Chebyshev inequality. Thus, conditions C.2 and C.3 follow with $\epsilon_1 =
o_p(1)$, $\epsilon_2 = 0$, and $J=2A'A$. $\qed$


\begin{lemma}\label{Lemma:Kc2}
Under the conditions ZE.1, ZE.2, and ZE.3 there exist a
constant $C>0$ such that by setting $K = B(0,C\sqrt{d})$ we have
$\int_{K^c} \ell(\lambda) d\lambda = o_p\( \int_{K} \ell(\lambda)
d\lambda \)$ and condition C.1 holds. \end{lemma}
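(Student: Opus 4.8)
The plan is to bound $\int_{K^{c}}\ell(\lambda)\,d\lambda$ against $\int_{K}\ell(\lambda)\,d\lambda$ by partitioning $K^{c}$ into a moderate shell, where the quadratic approximation of Lemma \ref{Lemma:Zest} is still valid, and a genuinely far region, where only the identification condition ZE.2 is available. For the denominator, Lemma \ref{Lemma:Zest} gives $\ln\ell(\lambda) = -\lambda'A'A\lambda + o_p(1)$ uniformly on any ball $B(0,C'\sqrt d)$ with $C'$ a fixed constant, so $\int_{K}\ell\,d\lambda \ge e^{-o_p(1)}\int_{\|\lambda\|\le C\sqrt d}e^{-\lambda'A'A\lambda}\,d\lambda$; by the concentration of the Gaussian measure in high dimensions (as in Comment \ref{comment: on conditions}) this is at least $\tfrac12(2\pi)^{d/2}\det(2A'A)^{-1/2}$ once $C$ is large, hence at least $e^{-C_{0}d}$ for a fixed $C_{0}$ using the eigenvalue bounds in ZE.2. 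The same expansion handles the shell $C\sqrt d<\|\lambda\|\le C'\sqrt d$: there $\ell(\lambda)\le e^{o_p(1)}e^{-\lambda_{\min}(A'A)\|\lambda\|^{2}}$, and the Gaussian tail estimate $\Pr(\|Z\|^{2}>cd)\le e^{-d(c-1-\log c)/2}$, together with the boundedness of the eigenvalues of $A'A$, makes the shell integral $o_p(1)$ relative to the denominator once $C$ is chosen large enough.

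For the far region $\|\lambda\| > C'\sqrt d$, I would write $S_n(\theta) = \sqrt n\,\EE[m(X,\theta)] + W_n(\theta) + S_n(\theta_0)$ with $W_n$ as in (\ref{Def:DeltaW}), use the maximal inequality bound (\ref{Eq:Approx}), valid uniformly over $\Theta_n$, and $\|S_n(\theta_0)\| = O_p(\sqrt d)$ from ZE.1 and Chebyshev. Choosing the fixed constant $C'$ large enough, one checks via ZE.3 --- all three parts enter here --- that on $\{\|\lambda\|>C'\sqrt d\}$ one has $\|\theta-\theta_0\| = \|\lambda+s\|/\sqrt n \ge \|\lambda\|/(2\sqrt n)$ and that $\|\theta-\theta_0\|$ exceeds the threshold $\rho_n$ beyond which $\|W_n(\theta)\| + \|S_n(\theta_0)\| \le \tfrac12\sqrt n\,\|\EE[m(X,\theta)]\|$; the trade-off between $\sqrt{d\log n}\,\|\theta-\theta_0\|^{\alpha}$ and $\sqrt n\,\|\theta-\theta_0\|$ is exactly what ZE.3(a) and (b) control, while ZE.3(c) keeps the $T_n$-dependent part of $\|W_n\|$ under control up to scale $T_n$. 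Hence on this region $\|S_n(\theta)\|\ge\tfrac12\sqrt n\,\|\EE[m(X,\theta)]\|\ge\tfrac12\sqrt n\,(\sqrt\mu\,\|\theta-\theta_0\|\wedge\delta)$, and since $\|S_n(\tilde\theta)\|^{2} = O_p(d)$ we obtain $\ln\ell(\lambda)\le -\tfrac14 n(\mu\|\theta-\theta_0\|^{2}\wedge\delta^{2}) + O_p(d)$.

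I would then split the far region according to the minimum. Where $\sqrt\mu\,\|\theta-\theta_0\|\le\delta$ we have $\ln\ell(\lambda)\le -c\|\lambda\|^{2} + O_p(d)$ for a fixed $c>0$, so that piece integrates to at most $e^{O_p(d)}(\pi/c)^{d/2}\Pr(\|Z\|^{2} > 2c(C')^{2}d)$, and enlarging the fixed constant $C'$ makes this Gaussian tail --- which decays exponentially in $d$ at a rate growing with $C'$ --- dominate the fixed $O(d)$ constants (arising from $\det(A'A)^{1/2}$ and the bound on $\|S_n(\tilde\theta)\|^{2}$), giving $o_p(1)$ times the denominator. Where $\sqrt\mu\,\|\theta-\theta_0\| > \delta$ we have $\ln\ell(\lambda)\le -\tfrac18 n\delta^{2}$ with probability tending to one (using $d = o(n)$), so that piece is at most $\vol\big(\sqrt n(\Theta-\theta_0)\big)e^{-n\delta^{2}/8}\le e^{O(d\log(\sqrt n\,T_n))}e^{-n\delta^{2}/8}$, which is $o(e^{-C_{0}d})$ because $d\log n/n\to 0$ by ZE.3(a). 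Adding the three contributions yields $\int_{K^{c}}\ell\,d\lambda = o_p\big(\int_{K}\ell\,d\lambda\big)$, i.e. $\int_{K}f\,d\lambda \ge 1 - o_p(1)$; combined with $\int_{K}\phi\,d\lambda\ge 1-o(1)$ (Gaussian concentration) and $\EE[s]=0$, $\cov(s)$ with bounded eigenvalues (established just before the lemma), condition C.1 follows.

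The main obstacle I anticipate is the uniform lower bound on $\|S_n(\theta)\|$ over the ``intermediate'' part of the far region, where $\|\theta-\theta_0\|$ is small but outside the $\sqrt{d/n}$-neighborhood in which the quadratic expansion holds: there one must balance the identification bound $\sqrt n\,\|\EE[m(X,\theta)]\|\gtrsim\sqrt n\,\|\theta-\theta_0\|$ against the empirical fluctuation $\|W_n(\theta)\| = O_p(\sqrt{d\log n}\,\|\theta-\theta_0\|^{\alpha} + n^{-1/2}d^{3/2}\log n)$ simultaneously at every scale of $\|\theta-\theta_0\|$ up to $T_n$. This comparison is delicate precisely because $\alpha<1$ in the non-smooth case, and checking that the resulting thresholds are compatible with $C'$ being a fixed constant --- which is where the precise exponents in ZE.3(a)--(c) are used --- is the crux of the argument.
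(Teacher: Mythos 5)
Your proof is correct and follows essentially the same strategy as the paper's: lower-bound the denominator $\int_K \ell$ via the quadratic expansion of Lemma \ref{Lemma:Zest} and Gaussian concentration; decompose $S_n(\theta)=\sqrt n\,E[m(X,\theta)]+W_n(\theta)+S_n(\theta_0)$ and use the maximal inequality (\ref{Eq:Approx}) together with the identification bound (\ref{Eq:ZE2}) to show that on $K^c$ the fluctuation terms are dominated by $\|E[S_n(\theta)]\|$, so $\|S_n(\theta)\|^2$ is at least a fixed fraction of $\|E[S_n(\theta)]\|^2$; then split according to whether $\sqrt\mu\|\theta-\theta_0\|$ is below or above $\delta$ and bound the two pieces by a Gaussian tail and by $\vol(\Theta_n)e^{-cn\delta^2}$ respectively. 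The only organizational difference is that you interpose an explicit shell $C\sqrt d<\|\lambda\|\le C'\sqrt d$ on which the quadratic expansion is applied, whereas the paper avoids this by placing a second ball $\widetilde K$ centered at $\theta_0$ strictly inside $\hat K=B(\tilde\theta,C\sqrt{d/n})$; since $\widetilde K\subseteq\hat K$ with high probability, the identification-based lower bound (valid on $\widetilde K^c$) already covers all of $\hat K^c=K^c$ without a separate shell argument. Your shell is a harmless redundancy serving the same purpose---ensuring $\|\theta-\theta_0\|\gtrsim\sqrt{d/n}$ on the far region---that the paper achieves by recentering. One small imprecision: for the very-far volume bound you invoke ZE.3(a) to conclude $d\log T_n=o(n)$, but what is needed (and what the paper cites) is the combined force of ZE.3, in particular that $T_n$ cannot grow faster than polynomially, which is what ZE.3(c) controls; ZE.3(a) alone gives only $d\log n=o(n)$.
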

\Aproof{Lemma}{Lemma:Kc2} For notational convenience we conduct the
proof in the original parameter space. Let $\tilde\theta = \theta_0 + s/\sqrt{n}$ and $\varepsilon>0$ be any small positive constant. Since $\|s\| = O_p(d^{1/2})$, there is a constant $\hat C$ such that
$\|s\|\leq \hat C d^{1/2}$, with asymptotic probability no smaller
than $1-\varepsilon$. Below we replace the last phrase by ``\wpe".

Now, since $\Ep[S_n(\theta_0)] =
0$, we have that
\begin{equation}\label{Eq:BoundSn01}
S_n(\theta) = W_n(\theta) + S_n(\theta_0) + \Ep[ S_n(\theta)],
\end{equation} where $W_n(\theta)$ is defined in
(\ref{Eq:Approx}).

Next, define for $C \geq \hat C + \tilde C$ the sets \begin{equation}\label{Def:KK}\widetilde K =
B\(\theta_0,\ \tilde C \sqrt{d/n}\) \subseteq \hat K =
B\(\tilde \theta, C \sqrt{d/n}\),\end{equation}
where the inclusion holds \wpe.
 Note that these sets
are centered on different points. We will show that for a
sufficiently large constant $\tilde C $
$$ \int_{\hat K^c} \exp(-\|S_n(\theta)\|^2 ) d\theta = o_p\( \int_{\hat K} \exp(-\|S_n(\theta)\|^2 ) d\theta
\), $$
which implies the claim of the lemma.

{\em Step 1. Relative bound on $\|S_n(\theta_0)\|$.}  Note that
$\|S_n(\theta_0)\|=O_p(d^{1/2})$ by Chebyshev inequality. Using
equation (\ref{Eq:ZE2}) of condition ZE.2,  we have that
$$ \|\Ep[S_n(\theta)]\|^2 \geq \left( \sqrt{n} ( \sqrt{\mu}
\|\theta-\theta_0\| \wedge \delta )\right)^2  \geq \left( \
\tilde C\sqrt{\mu}\sqrt{d} \ \right)^2, \ \ \forall \theta \in \tilde K^c$$
since $\|\theta -\theta_0\| \geq \tilde C \sqrt{d/n}$. Therefore, there exists $\tilde C$ such that \wpe
\begin{equation}\label{Eq:BoundSn02}  \| \Ep[S_n(\theta)] \| > 5
\|S_n(\theta_0)\| \text{ uniformly in } \theta \in \widetilde
K^c.\end{equation}

{\em Step 2. Relative bound on $\|W_n(\theta)\|$.} Using equation
(\ref{Eq:Approx}), we have that for uniformly in $\theta \in \Theta_n
\subset B(0,T_n)$
$$
\left\| W_n(\theta) \right\| = O_p\left( \ \sqrt{d \log n}
\|\theta - \theta_0\|^{\alpha} + n^{-1/2}d^{3/2}\log n\right),$$
Building on that,  we will show
that $\|W_n(\theta)\|$ $=$ $o_p\(\sqrt{n} ( \delta \wedge \|\theta
- \theta_0\| ) \)$ uniformly on  $\theta \in \widetilde K^c$, and
therefore
\begin{equation}\label{Eq:BoundSn03}
\|W_n(\theta)\| = o_p(\| E [ S_n(\theta) ] \| ), \ \mbox{uniformly
in} \  \theta \in \tilde K^c.
\end{equation}

For the case that $\delta \leq \|\theta - \theta_0\| \leq T_n$ it
suffices to have $\ \sqrt{d \log n}
T_n^{\alpha} + n^{-1/2}d^{3/2}\log n = o (n^{1/2})$. On the other hand, for $C\sqrt{d/n} \leq \|\theta - \theta_0\|\leq \delta$ it suffices to
have $\sqrt{d \log n}
\|\theta - \theta_0\|^{\alpha} + n^{-1/2}d^{3/2}\log n = o( \sqrt{n} \|\theta - \theta_0\|)$.  Indeed, $\sqrt{d \log n}
\|\theta - \theta_0\|^{\alpha} =( \sqrt{n} \|\theta - \theta_0\|)$ if
$\sqrt{d \log n} = o(\sqrt{n} \|\theta - \theta_0\|^{1-\alpha})$,
which is implied by $ \sqrt{d \log n} =o(\sqrt{n}(d/n)^{\frac{1-\alpha}{2}})$.  Moreover,
$n^{-1/2}d^{3/2}\log n = o( \sqrt{n} \|\theta - \theta_0\|)$ if $n^{-1/2}d^{3/2}\log n = o( \sqrt{n} \sqrt{d/n})$. All of the above conditions hold under condition ZE.3.

{\em Step 3. Lower bound on $\|S_n(\theta)\|$.} We will show that
\begin{equation}\label{Bound:Q} \|S_n(\theta)\|^2  = \| \Ep[S_n(\theta)]
+ S_n(\theta_0) + W_n(\theta) \|^2 \geq \frac{1}{2}
 \| \Ep[S_n(\theta)]\|^2\end{equation}
 uniformly for all $\theta \in \widetilde K^c$ wp $1- 2\varepsilon$.

For any two vectors $a$ and $b$, we have $\| a + b \|^2 \geq
(\|a\| - \|b\|)^2 = \|a\|^2 - 2\|a\|\|b\| + \|b\|^2 \geq
\|a\|^2\(1 - 2\|b\|/\|a\| \)$. Applying this relation with
$a=E[S_n(\theta)]$ and $b = W_n(\theta)+S_n(\theta_0)$,(\ref{Eq:BoundSn02}), and
(\ref{Eq:BoundSn03}), we obtain (\ref{Bound:Q}).

{\em Step 4. Bounding the integrals.} Using (\ref{Bound:Q}) and
 ZE.2  wp $1- 3\varepsilon$
$$\begin{array}{rcl} &\int_{\hat K^c}&  \exp  (-\|S_n(\theta)\|^2)d\theta
\leq  \int_{\widetilde K^c} \exp  (-\|S_n(\theta)\|^2)d\theta \\
& &\leq  \int_{\widetilde K^c}  \exp  ( -
\frac{1}{2}\|E\[S_n(\theta)\]\|^2) d\theta \\
& & \leq   \int_{\widetilde K^c} \exp( - \frac{1}{2}\mu n \| \theta - \theta_0\|^2 )
d\theta + \int_{\widetilde K^c} \exp( - \frac{1}{2}\mu n \delta^2 ) d\theta \\
& & \leq  (2\pi)^{d/2}\(n\mu\)^{-\frac{d}{2}} P( \|U\| > \tilde C
\sqrt{d/n}
) +   \exp( - \frac{1}{2}\mu n \delta^2 ) \vol( \Theta_n )\\
 & & \leq
(2\pi)^{d/2}\(n\mu\)^{-\frac{d}{2}} \exp\( -
\frac{(\tilde C-1/\sqrt{\mu})^2\mu}{2}d \) + \nu_d
T_n^d \exp( - \frac{1}{2} \mu n \delta^2) \\
\end{array}
$$
where $\nu_d$ is the volume of the $d$-dimensional unit ball, which goes to
zero as $d$ grows, and $U\sim N(0, \frac{1}{\mu n} \ I_d)$. In the first line
we used the inclusion (\ref{Def:KK}), and  in the last line we used a standard Gaussian concentration inequality, Proposition 2.2 in Talagrand \cite{Talagrand1994}, and the fact
that $\Ep[\|U\|] \leq (\Ep[\|U\|^2])^{1/2} = \frac{1}{\sqrt{\mu}}
\sqrt{d/n}$.

On the other hand, by Lemma \ref{Lemma:Zest} we have
$$-\|S_n(\theta)\|^2 + \|S_n(\tilde\theta)\|^2  = n\|A(\theta - \tilde\theta)\|^2 + o_p(1)$$
uniformly for $\theta \in \hat K$. This yields that wp $1- \varepsilon$
$$\begin{array}{rcl} \int_{\hat K} \exp( - \|S_n(\theta)\|^2 )  & d\theta &
\geq  \exp( -\|S_n(\tilde\theta)\|^2 ) \int_{\hat K} \exp( - n\|A(\theta -
\tilde\theta)\|^2 + o_p(1) ) d\theta \\
& \geq &  \exp(- C_2 d) \int_{\hat K} \exp( - C_1n\|\theta -
\tilde\theta\|^2 ) d\theta \\
& \geq &  \exp(- C_2   d)
(2\pi)^{\frac{d}{2}} (C_1n)^{-\frac{d}{2}} ( 1- P( \|U\| \leq C \sqrt{d/n}) ) \\
& \geq & \exp(-C_2  d ) (2\pi)^{\frac{d}{2}} (C_1n)^{-\frac{d}{2}} (1- o(1)) \\
\end{array}
$$ where constant $C_1$ is maximal eigenvalue of $A'A$, constant $C_2$ is such that $\|S_n(\tilde\theta)\|^2 \leq  C_2 d$ wp $1-\varepsilon$ by Lemma \ref{Lemma:Zest}, $U \sim N(0,\frac{1}{C_1n}I_d)$. In the last line we used the standard
Gaussian concentration inequality, Proposition 2.2 in Talagrand
\cite{Talagrand1994}, with constant
$C > 2/\sqrt{C_1}$ to get $P( \|U\| \leq
C \sqrt{d/n})  = o(1)$.

Finally, we obtain that wp $1- 5\varepsilon$
{\small $$ \frac{\int_{\hat K^c} \exp( -\|S_n(\theta)\|^2 )
d\theta}{\int_{\hat K} \exp( -\|S_n(\theta)\|^2 ) d\theta} \leq
\frac{(2\pi)^{\frac{d}{2}}(\mu n)^{-\frac{d}{2}} \exp\( - \frac{(\tilde C
-1/\sqrt{\mu})^2\mu}{2}d \) + \nu_d T_n^d \exp( - \frac{1}{2} \mu
n \delta^2)}{\exp(-C_2 d) \ (2\pi)^{d/2} (C_1n)^{-d/2}(1+o(1))}
$$}
\!\!where the right hand side is $o(1)$
by choosing  $\tilde C > 0$ sufficiently large, and noting that terms $(2\pi)^{d/2} n^{-d/2}$ cancel
and that $d\ln T_n = o(n)$ by condition ZE.3.

Since $\varepsilon >0$ can be set as small as we like, the conclusion follows. \qed


\section{Bounding log-$\beta$-concave
functions}\label{App:BetaLog}

\begin{lemma}\label{STRbr}
Let $f:\RR \to \RR $ be a unidimensional log-$\beta$-concave
function. Then there exists a logconcave function $g:\RR \to \RR$
such that
$$\beta g(x) \leq f(x) \leq g(x) \ \ \mbox{for every} \ \ x \in \RR.$$
\end{lemma}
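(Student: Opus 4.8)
The plan is to take $g$ to be the smallest log-concave function that dominates $f$. Concretely, write $h:=\ln f$ with the convention $\ln 0 = -\infty$ (note $f\ge 0$ is implicit in the definition of log-$\beta$-concavity), let $\hat h$ be the least concave majorant of $h$, and set $g:=e^{\hat h}$. Such a $\hat h$ exists: the pointwise infimum of all concave functions lying above $h$ is again a concave function lying above $h$ (an infimum of concave functions is concave, since $\inf_i(a_i+b_i)\ge \inf_i a_i+\inf_i b_i$), and equivalently one may use the explicit representation $\hat h(x)=\sup\{\sum_{i=1}^{k}\lambda_i h(x_i):k\ge 1,\ \lambda_i\ge 0,\ \sum_i\lambda_i=1,\ \sum_i\lambda_i x_i=x\}$. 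By construction $g$ is log-concave and, since $\hat h\ge h$, we have $g\ge f$. The entire content of the lemma is therefore the reverse bound $\beta g\le f$, which after taking logarithms amounts to showing $\hat h(x)\le h(x)+\ln(1/\beta)$ for every $x\in\RR$.

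The key step is a one-dimensional Carath\'eodory reduction showing that only two points are ever needed, i.e. that for every $x$,
$$\hat h(x)=\sup\{\mu\, h(a)+(1-\mu)\,h(b):\ \mu\in[0,1],\ \mu a+(1-\mu)b=x\}.$$
Indeed, take any finite representation $x=\sum_{i=1}^{k}\lambda_i x_i$ and consider the upper concave envelope of the planar points $(x_1,h(x_1)),\dots,(x_k,h(x_k))$ over $[\min_i x_i,\max_i x_i]$: it is piecewise linear with breakpoints among the $x_i$, so its value at $x$ is an interpolation $\mu\, h(x_j)+(1-\mu)\,h(x_l)$ of two of those points, and this value dominates $\sum_i\lambda_i h(x_i)$. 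Taking the supremum over all finite representations gives the displayed two-point formula (with the degenerate case $a=b=x$ included).

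Finally, applying the defining inequality of log-$\beta$-concavity to the pair $(a,b)$ gives $\mu\, h(a)+(1-\mu)\,h(b)\le h(\mu a+(1-\mu)b)+\ln(1/\beta)=h(x)+\ln(1/\beta)$; taking the supremum over all admissible $(a,b,\mu)$ and invoking the two-point formula yields $\hat h(x)\le h(x)+\ln(1/\beta)$. Exponentiating gives $g(x)\le f(x)/\beta$, i.e. $\beta g(x)\le f(x)$, and together with $f\le g$ this is the assertion. (At points where $f(x)=0$ the bound reads $g(x)\le 0$, hence $g(x)=0$, which is consistent, since log-$\beta$-concavity forces $f$ to vanish outside the closure of a single interval on whose interior $f$ is strictly positive.)

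The main obstacle is making the one-dimensional Carath\'eodory reduction to two points fully rigorous while allowing $h$ to take the value $-\infty$ and allowing the relevant suprema not to be attained; once that bookkeeping is in place, everything else follows immediately from the definitions.
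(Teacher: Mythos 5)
Your proposal is correct and takes essentially the same route as the paper: both take $g=e^{\hat h}$ with $\hat h$ the least concave majorant of $h=\ln f$, reduce to a two-point (one-dimensional Carath\'eodory) representation of $\hat h(x)$, and then apply the defining inequality of log-$\beta$-concavity to conclude $\hat h \le h + \ln(1/\beta)$. The only cosmetic difference is that you justify the two-point reduction via the piecewise-linear upper envelope of finitely many planar points, while the paper invokes a supporting-hyperplane argument on the (hypo)graph of $\hat h$; the substance is the same, and your version is if anything slightly cleaner about not needing the supremum to be attained.
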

\begin{proof}
Consider $h(x) = \ln f(x) $ a $(\ln \beta)$-concave function. Now,
let $m$ be the smallest concave function greater than $h(x)$ for
every $x$, that is,  $$\begin{array}{rcl}
m(x) & = & \displaystyle \sup \left\{ \sum_{i=1}^k \lambda_i h(y_i) : k\in \NN, \lambda \in \RR^k, \lambda \geq 0, \sum_{i=1}^k\lambda_i = 1, \sum_{i=1}^k \lambda_i y_i = x \right\}. \\
\end{array}$$

Recall that the epigraph of a function $w$ is defined as $epi_w =
\{ (x,t) : t \leq w(x)\}$. Using our definitions, we have that
$epi_m = \conv(epi_h)$ (the convex hull of $epi_h$), where both
sets lie in $\RR^2$. In fact, the values of $m$ are defined only
by points in the boundary of $\conv(epi_h)$. Consider $(x,m(x))
\in epi_m$, since the epigraph is convex and this point is on the
boundary, there exists a supporting hyperplane $H$ at $(x,m(x))$.
Moreover, $(x,m(x)) \in \conv(epi_h\cap H)$. Since $H$ is one
dimensional, $(x,m(x))$ can be written as convex combination of at
most $2$ points of $epi_h$.

Furthermore, by definition of log-$\beta$-concavity, we have that
$$ \ln 1 / \beta \geq \sup_{\lambda \in [0,1], y, z}  \lambda h(y) + (1-\lambda) h(z) -  h\left( \lambda y + (1-\lambda)z  \right). $$
\noindent Thus, $ h(x) \leq m(x) \leq h(x) + \ln (1/\beta)$.
Exponentiating gives $ f(x) \leq g(x) \leq \frac{1}{\beta} f(x),$
where $g(x)= e^{m(x)}$ is a logconcave function.
\end{proof}

\end{appendix}



\end{document}